                  \newtheorem{theorem}{Theorem}
                  \newtheorem{lemma}[theorem]{Lemma}
                  \newtheorem{proposition}[theorem]{Proposition}
                  \theoremstyle{definition}
                  \newtheorem{remark}[theorem]{Remark}
                  \newtheorem{definition}[theorem]{Definition}
                  \newtheorem{nomenclature}[theorem]{Nomenclature}
\newcommand{\R}{\mathbb{R}}
\renewcommand{\exp}{\mathrm{e}}
\newcommand{\bDelta}{\bm{\Delta}}
\newcommand{\laplaciano}{\text{{$\ell$}-Laplacian}}
\newcommand{\XS}{\widehat{X}}
\newcommand{\XSunnorm}{{X}}
\newcommand{\filo}{\ell}
\newcommand{\surf}{S}
\DeclareMathOperator{\Hess}{Hess}
\newcommand{\linspan}{\operatorname{span}}
\renewcommand{\div}{\operatorname{div}}
\newcommand{\dir}{\begin{otherlanguage*}{russian}\textrm{\CYRD}\end{otherlanguage*}}
\newcommand{\fol}{\mathscr{F}}
\author{Riccardo Adami\thanks{Politecnico di Torino, Dipartimento di Scienze Matematiche “G.L. Lagrange”,
Corso Duca degli Abruzzi, 24, 10129, Torino, Italy. \texttt{riccardo.adami@polito.it}} 
    \and 
    Ugo Boscain\thanks{CNRS, Sorbonne Université, Inria, Université de Paris, Laboratoire Jacques-Louis Lions, Paris, France. \texttt{ugo.boscain@sorbonne-universite.fr}}
    \and 
    Dario Prandi\thanks{Université Paris-Saclay, CNRS, CentraleSupélec, Laboratoire des signaux et systèmes, 91190, Gif-sur-Yvette, France. \texttt{dario.prandi@centralesupelec.fr}}
    \and 
    Lucia Tessarolo\thanks{Sorbonne Université, Inria, Université de Paris, Laboratoire Jacques-Louis Lions, Paris, France. \texttt{lucia.tessarolo@sorbonne-universite.fr}}}
\date{\today}
\title{Schrödinger evolution on surfaces in 3D contact sub-Riemannian manifolds}
\begin{document}

\maketitle

\begin{abstract}
    Let $M$ be a 3-dimensional contact sub-Riemannian manifold and $S$ a surface embedded in $M$.
    Such a surface inherits a field of directions that becomes singular at characteristic points. The integral curves of such field define a characteristic foliation $\mathscr{F}$. 
 In this paper we study the Schrödinger evolution of a particle constrained on $\mathscr{F}$. In particular, we relate the self-adjointness of the Schr\"odinger operator with a geometric invariant of the foliation. We then classify a special family of its self-adjoint extensions: those that yield disjoint dynamics.

\end{abstract}

\tableofcontents

\section{Introduction}
\label{sec: intro}

Consider a 3-dimensional contact sub-Riemannian manifold defined as a triple $(M, D, g)$, where $M$ is a connected smooth manifold, $D \subset TM$ is a smooth vector distribution satisfying $\dim D_q = 2$ and $\dim (D + [D, D])_q = 3$ for all $q \in M$, and $g$ is a Riemannian metric defined on $D$ (for more details see  \cite{agrachevComprehensive2019a}).

Let $\surf$ be a smooth surface embedded in $M$. We denote by $C(S)$ the set of characteristic points, i.e. the points at which the distribution $D$ is tangent to the surface $S$. We assume that this set consists of isolated points, which is actually a generic condition (see \cite{barilariInduced2022}).
We can define the map $\dir :  \surf \setminus C(S) \ni q \mapsto D_q \cap T_q \surf$, which specifies a field of directions on $\surf \setminus C(S)$. The integral curves of this field form the \emph{characteristic foliation} $\fol$ of $\surf \setminus C(S)$, consisting of $1$-dimensional leaves $\ell$. Such foliation can be visualized as a graph with vertices that connect a possible continuum of edges. 
Locally, this foliation can be defined by the integral curves of a vector field on $S$, called the \emph{characteristic vector field}, 
whose set of singular points is $C(S)$. Henceforth, to simplify the discussion, we assume the characteristic vector field to be globally defined and complete.

Each leaf $\ell$ of the characteristic foliation is endowed with a locally Euclidean metric, inherited from the Riemannian metric on $D$.
Hence, $\ell$ is isometric either to a circle or to an open (finite or infinite) segment $(a_0, a_1)$, $-\infty\le a_0 < a_1\le \infty$ and in this case we write $\filo\sim (a_0,a_1)$. 
If $a_0$ (resp. $a_1$) is finite and the corresponding limit point of $\ell$ belongs to $S$, then such point is a characteristic point (see Proposition 1.3 of  \cite{barilariInduced2022}).

Recall that any contact sub-Riemannian manifold is naturally equipped with a volume $\mathcal{P}$, called the \emph{Popp volume}\footnote{To simplify the exposition, we henceforth assume that $\mathcal{P}$ is given by a global volume form (see Section~\ref{sec: volume}).}.
Although $\surf$ has no sub-Riemannian structure, it inherits a volume form $\mu$ defined on $S\setminus C(S)$ via the contraction of $\mathcal{P}$ along the horizontal normal $N$ to $\surf$. 
This volume form is globally defined if $S$ is orientable (a hypothesis that we assume all along the paper) and reads
\begin{equation}
    \label{hor unit norm}
    \mu(\cdot,\cdot)=\mathcal{P}(N,\cdot,\cdot),
\end{equation}
where, for each $q\in S\setminus C(S)$, $N_q\in D_q$ is defined up to a sign by the following property:
\[
 g_q(N_q,w)=0 \ \ \forall \, w\in T_qM\cap D_q.
\]
It follows that the volume form $\mu$ is non-vanishing on $S\setminus C(S)$ and can be extended to the whole $S$ by continuity (see Proposition~\ref{prop: mu va a zero}).
This allows to define a Laplace-Beltrami-like operator on $\ell$ ($\laplaciano$ for short), via the expression
\begin{equation}
\label{lapl div grad}
    \Delta_\ell u = \div_\mu \nabla_\ell u.
\end{equation}
Here $\nabla_\ell$ is the Riemannian gradient associated with the locally Euclidean structure on $\ell$, and $\div_\mu$ is the divergence w.r.t. the surface measure $\mu$. 

The $\laplaciano$ $\Delta_\ell$ was first introduced in \cite{barilariStochastic2021} as the 
 limit of the classical Laplace-Beltrami operator on the surface $S$  as embedded in a family of Riemannian manifolds approximating $(M,D,g)$. The intrinsic expression of $\Delta_\ell$ \eqref{lapl div grad} was shown in \cite{Barilari_2023}.

As explained in Section \ref{sec: disi}, the measure $\mu$ disintegrates on the leaves of $\fol $ into a family of one dimensional measures $\mu_\ell$, $\ell\in \fol $. Each of such measures is unique up to a multiplicative constant which does not affect the divergence. Hence  
$\Delta_\ell  = \div_\mu \nabla_\ell =\div_{\mu_\ell} \nabla_\ell $ can be seen as an operator acting on functions of one variable (defined on $\ell$). As a consequence, the operator $\Delta_\ell$ cannot be hypoelliptic. By construction $\Delta_\ell$ is symmetric with respect to the measure $\mu_\ell$.

While the heat diffusion associated with $\Delta_\ell$ was studied in \cite{barilariStochastic2021}, in the present
paper we analyse the Schrödinger evolution 
\begin{equation}
    \label{eq:schrodinger}
    i\partial_t \psi_\ell = -\Delta_\ell \psi_\ell.
\end{equation}

For investigating such equation,
we focus on the issue of the self-adjointness of the operator $-\Delta_\ell$.
Self-adjointness is an essential feature for any operator representing a quantum observable quantity, for two reasons: first, the spectrum of any self-adjoint operator is real, 
allowing for the interpretation of its elements as the possible results of a suitable measurement; second, the evolution
resulting from \eqref{eq:schrodinger} is unitary, 
entailing the conservation of probability.
It is worth stressing that the self-adjointness of an operator depends not only on its action, but also on its domain, tipically characterized by suitable boundary conditions. In some
cases the natural domain is a self-adjointness domain, thus self-adjointness is not an issue. For instance, the standard Laplacian $-\Delta$ on
$L^2 (\R)$ turns out to be self-adjoint in its natural  domain $H^2 (\R)$. In other cases, the natural domain does not automatically provides self-adjointness: the standard Laplacian $- \Delta$ on $L^2 (\R^+)$ is not self-adjoint on the domain $H^2 (\R^+),$ but
requires an additional boundary condition at the origin, that can be of Dirichlet, Neumann, or Robin type.

The dynamical interpretation of the self-adjointness of $\Delta_\ell$ on $L^2(\ell,\mu_\ell)$ is transparent: owing to the conservation of probability, if $\Delta_\ell$ is self-adjoint, then the Schrödinger evolution \eqref{eq:schrodinger}
does not allow the solution to leave the leaf $\ell$. It appears here that the boundary conditions play a crucial role, in that they prevent the solution from escaping. Indeed a major part of our analysis
focuses on them.

In order to prove the self-adjointness of $\Delta_\ell$, we show that on a given leaf $\ell$, the equation \eqref{eq:schrodinger} is unitarily equivalent to a Schrödinger equation of the form 
\begin{equation}
    \label{eq:schrodinger}
    i\partial_t \psi_\ell = \left(
    -\partial_s^2+ V(s)\right)\psi_\ell \text{  in  }L^2(\ell,ds).
\end{equation}
Here, $s$ is the arc-length parameter along the leaf $\ell$ and $V$ is a potential which diverges at characteristic points $p$ of $\ell$. Hence, the problem of the self-adjointness of $\Delta_\ell$ can be reduced to the study of the self-adjointness w.r.t.~the standard Lebesgue volume $ds$ of the standard Laplacian in dimension one with a divergent potential.

For the sake of exposition, we first present our problem on an 
explicitly solvable model.

\begin{remark}
    In the study of 3-dimensional contact distributions from a topological perspective, embedded surfaces play a central role \cite{BennequinEQPfaff, GirouxConvexité91, GirouxBifurcat00}. Recently, related topics were explored in the case of 3-dimensional contact sub-Riemannian manifolds, i.e. contact manifolds in which the distribution is equipped with a metric.  See for instance \cite{DANIELLI2007292, DanielliMean12} for Carnot groups, \cite{barilariInduced2022, Eugenio-quantization} for generic structures, \cite{BaloghCorrectionHeisenberg20, BaloghHeisen17, VelosoGauss-Bonnet20} for Gauss-Bonnet theorems.
\end{remark}

\subsection{Example: Hyperbolic paraboloid in the Heisenberg case}
\label{hyper para}
As an example, let us consider the 3-dimensional contact sub-Riemannian  Heisenberg manifold $(\mathbb{H},D,g)$, $\mathbb{H}\cong \R^3$, $D=\linspan\{X_1,X_2\}$, where
$$
X_1=\partial_x - \frac y2 \partial_z
\qquad\text{and}\qquad
X_2 = \partial_y + \frac x2 \partial_z,
$$
and the metric $g$ on $D$ is chosen in order for $\{X_1,X_2\}$ to be an orthonormal frame.
The Popp volume on $\mathbb{H}$ is
$\mathcal{P} = dx\wedge dy\wedge dz$.

Consider now a hyperbolic paraboloid embedded in $\mathbb{H}$, i.e., $\surf=\{z=a x y\}$, where $a\ge 0$ is a real parameter.
When $a\ne \frac{1}{2}$ the surface $S$ has a unique characteristic point at the origin. The case $a=1/2$ is degenerate in the sense that there is a line of  characteristic points. In the following we consider only the case  
$a\neq1/2$.

The horizontal unit normal to the surface is
\[
N=\frac{X_1(u)}{\sqrt{X_1(u)^2+X_2(u)^2}}X_1+\frac{X_2(u)}{\sqrt{X_1(u)^2+X_2(u)^2}}X_2,
\]
where $u=z-axy$ and we made an arbitrary choice of sign. Notice that $N$ is not defined at the origin.

Using the unit normal, one can find the volume form \eqref{hor unit norm} inherited  on $S\setminus \{0\}$ from the Heisenberg structure. It reads
\begin{equation}
\mu=\iota_N(\mathcal{P})=\sqrt{\lambda_-^2 x^2+\lambda_+^2 y^2}\,dx\wedge dy \mbox{~~~ where ~~~}\lambda^\pm=\frac12\pm a.
\end{equation}

\noindent
The field of directions on $\surf\setminus\{(0,0,0)\}$ is 
\begin{equation}
    \dir(x,y,z)=\linspan \XSunnorm(x,y,z) 
    \quad\text{where}\quad
    \XSunnorm(x,y,z) = x\lambda^- 
    \partial_x+y\lambda^+
    \partial_y+\frac{xy}{2}\partial_z.
\label{eq:dir}
\end{equation}
It is easy to verify that the characteristic point at the origin is a proper node if $0< a<\frac12$, a saddle if $a>\frac12$, and a star node if $a=0$, in which case
the surface $\surf$ is the $(x,y)$ plane. In our point of view such types of singularity radically differ from one another, since in the case of a node there is a continuous of leaves arriving at the origin, while in the case of a saddle  only the four separatrices arrive there. This distinction  strongly impacts on any evolution on the foliation one aims at studying, in particular on that generated by the Schrödinger's equation.

The $x$ and the $y$ axes always belong to the characteristic foliation and moreover they coincide with  the eigenspaces at the origin of $D\XSunnorm$, where $\XSunnorm$ is the vector field defined in formula \eqref{eq:dir}. 

Let us focus on the expression of the operator $\Delta_\ell$, defined in \eqref{lapl div grad}, along the $x$ and $y$  axes. On any leaf $\ell$ of the foliation, the gradient $\nabla_\ell$ must be interpreted as $\nabla_\ell(\cdot)=\XS(\cdot)\XS=\frac{d}{ds}(\cdot)\XS,$ where 
\[\XS=\frac{\XSunnorm}{\norm{\XSunnorm}_g}\]
and $s$ is the arc-length parameter of $\ell$.
Then one obtains
\[
\Delta_x=\partial^2_x+\frac{1}{\lambda^-x}\partial_x
\qquad\text{and}\qquad
\Delta_y=\partial^2_y+\frac{1}{\lambda^+y}\partial_y.
\]
Such operators are symmetric w.r.t.~the measures $\mu_x = x^{1/\lambda^-}dx$ and $\mu_y = y^{1/\lambda^+}dy$, respectively. 
Actually, the measures that render $\Delta_x$ and $\Delta_y$ symmetric are disintegrations of $\mu$ along the two axes and are determined up to an irrelevant multiplicative constant.
More details on how  $\mu$ disintegrates in $\mu_x$ and $\mu_y$ can be found in Section~\ref{sec: disi}.

We stress that in the saddle case $a>1/2$ the measure $\mu_x$ explodes as $x\to 0$, even though $\mu_y$ goes to $0$ as $y\to 0$. In fact, even tough the measure $\mu$ goes to zero at characteristic points, the behaviour of the disintegration $\mu_\ell$ might depend on the leaf.

The unitary transformations $T_x(u)=x^{\frac{1}{2\lambda_-}}u$ and $T_y(u)=y^{\frac{1}{2\lambda_-}}u$ allow to rewrite the operators $\Delta_x$ and $\Delta_y$ as, respectively,
\[
L_x = \partial_x^2 - \frac{c_-}{x^2},
\quad\text{and}\quad
L_y = \partial_y^2 - \frac{c_+}{y^2}, 
\qquad\text{where}\quad
c_\pm = \frac{1-2\lambda^\pm}{4(\lambda^\pm)^2} = \mp \frac{2a}{(1\pm 2a)^2}
\]
which are symmetric with respect to the standard Lebesgue measure.
Notice that $-L_x$ and $-L_y$ can be written as
\begin{gather}
-L_x = -\partial_x^2 + V_x
\quad\text{where}\quad
V_x = \frac{2a}{(1- 2a)^2}\frac1{x^2}
\quad\text{is a repulsive potential,}
\\
-L_y = -\partial_y^2 +V_y
\qquad\text{where}\quad
V_y = -\frac{2a}{(1- 2a)^2}\frac1{y^2}
\quad\text{is an attractive potential}.
\end{gather}
As customary, we reduce the problem of self-adjointness to that of the essential self-adjointness, namely the existence of a unique self-adjoint extension.

It is well-known that the essential self-adjointness on $L^2(\R_+)$ of the 1-dimensional operator $L = -\partial_s^2 + cs^{-2}$, $s \in \R_+$, on the domain $\mathcal{C}^\infty_0(\R_+)$,\footnote{i.e., the space  of smooth functions supported on a compact subset of the interval $(0,+\infty)$.} is equivalent to the request $c\ge 3/4$ (see \cite[Theorem~X.10]{reedFourier2007}). We readily obtain the following.

\begin{proposition} 
    \label{prop:heisenberg-sa}
   Let $a\ge 0$ and $a\neq 1/2$. Consider $\Delta_x$ and $\Delta_y$ with domain $\mathcal{C}^\infty_0(\R_+)$.
    Then, the operator $\Delta_y$ is not essentially self-adjoint in $L^2(\R_+)$. Moreover,
    \begin{itemize}
        \item For $a =0$ the origin is of star node type and $\Delta_x$ is not essentially self-adjoint in $L^2(\R_+)$;
         \item for $0 < a < 1/6$ the  origin is of proper node type and $\Delta_x$ is not essentially self-adjoint in $L^2(\R_+)$;
        \item for $1/6 \le a < 1/2$ the  origin is of node type and $\Delta_x$ is essentially self-adjoint in $L^2(\R_+)$;
        \item for $1/2 < a \le 3/2$ the  origin is of saddle type and $\Delta_x$ is essentially self-adjoint in $L^2(\R_+)$;
        \item for $a > 3/2$ the origin is of saddle type and $\Delta_x$ is not essentially self-adjoint in $L^2(\R_+)$.
    \end{itemize}
\end{proposition}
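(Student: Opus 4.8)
The plan is to exploit the unitary equivalences $T_x$ and $T_y$ to reduce everything to the model operator $L = -\partial_s^2 + c\,s^{-2}$ on $L^2(\R_+)$ with domain $\mathcal{C}^\infty_0(\R_+)$, for which the criterion \cite[Theorem~X.10]{reedFourier2007} gives essential self-adjointness if and only if $c \geq 3/4$. First I would record that $T_x$ (resp.\ $T_y$) is multiplication by the smooth, strictly positive function $x^{1/(2\lambda^-)}$ (resp.\ $y^{1/(2\lambda^+)}$) on $\R_+$; this is well defined precisely because the standing hypothesis $a\neq 1/2$ forces $\lambda^-\neq 0$, while $\lambda^+ = \frac12+a>0$ for all $a\geq 0$. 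Such a multiplication is a unitary map from the weighted space $L^2(\R_+,\mu_x)$ (resp.\ $L^2(\R_+,\mu_y)$) onto $L^2(\R_+,ds)$ that carries $\mathcal{C}^\infty_0(\R_+)$ onto itself, so it intertwines $\Delta_x$ with $L_x$ (resp.\ $\Delta_y$ with $L_y$) and preserves the core. Since essential self-adjointness is a unitary invariant, it then suffices to test the threshold $3/4$ against the two constants $c_-$ and $c_+$.

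For $\Delta_y$ the relevant constant is $c_+ = -2a/(1+2a)^2$, which is $\leq 0$ for every $a\geq 0$ and hence strictly below $3/4$. The criterion therefore fails for all admissible $a$, so $\Delta_y$ is never essentially self-adjoint; this simply reflects the attractive (nonpositive) nature of the reduced potential $V_y$.

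For $\Delta_x$ the constant is $c_- = 2a/(1-2a)^2 \geq 0$, and the condition $c_-\geq 3/4$ becomes, after clearing the strictly positive denominator $(1-2a)^2$, the quadratic inequality $12a^2 - 20a + 3 \leq 0$. Its roots are $a = 1/6$ and $a = 3/2$, and since the parabola opens upward the inequality holds exactly on $[1/6,3/2]$. Thus $\Delta_x$ is essentially self-adjoint precisely when $a \in [1/6,1/2)\cup(1/2,3/2]$ and fails to be so on $[0,1/6)\cup(3/2,\infty)$. Overlaying this dichotomy with the singularity classification already recorded above (star node at $a=0$, proper node for $0<a<1/2$, saddle for $a>1/2$) reproduces the five itemized cases verbatim.

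The computations are elementary, and the only point deserving a little care is the verification that $T_x$ and $T_y$ genuinely implement unitary equivalences preserving the core $\mathcal{C}^\infty_0(\R_+)$, so that \cite[Theorem~X.10]{reedFourier2007} transfers with no change of domain; once this is in place, the entire statement is the solution of a single quadratic together with the known node/saddle classification.
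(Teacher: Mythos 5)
Your proposal is correct and follows essentially the same route as the paper: the paper likewise uses the unitary multiplication operators $T_x$, $T_y$ to reduce $\Delta_x$, $\Delta_y$ to the inverse-square operators $L_x$, $L_y$, and then applies the criterion of \cite[Theorem~X.10]{reedFourier2007} ($c\ge 3/4$), so that the five cases follow from $c_+\le 0$ and from solving $2a/(1-2a)^2\ge 3/4$, i.e.\ $12a^2-20a+3\le 0$, whose roots are $1/6$ and $3/2$. Your added care about $T_x$, $T_y$ being unitaries preserving the core $\mathcal{C}^\infty_0(\R_+)$ is exactly the implicit content of the paper's ``we readily obtain'' step.
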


\subsection{The general case and the Laplacian on the foliation}
\label{sec:intro-lapl-fol}

Proposition~\ref{prop:heisenberg-sa} highlights the richness of behaviours that can be encountered at characteristic points. Actually, in the general case of a surface in a 3-dimensional contact sub-Riemannian manifold, the self-adjointness of the $\laplaciano$ $\Delta_\ell$ with domain $\mathcal{C}^\infty_0(\ell)$\footnote{The choice of $\mathcal{C}^\infty_0(\ell)$ as a domain, i.e. the space of smooth functions that vanish in a neighbourhood of both endpoints of $\ell$, is quite standard in this type of problems.} on the leaf $\ell$ depends solely on the structure of the endpoints of $\ell$.
In Section~\ref{sec: ess-self}, following Weyl's limit-circle/limit-point criterion, we introduce the notion of essential self-adjointness at the endpoint $p$ of $\ell$ in such a way that $\Delta_\ell$ is essentially self-adjoint if and only if it is essentially self-adjoint at both endpoints of $\ell$.

More specifically, the self-adjointness of $\Delta_\ell\in\mathbb{R}$ at the characteristic point $p\in C(S)$ depends on the curvature-like invariant $\hat{K}_p$ introduced in \cite{barilariInduced2022} (see Section~\ref{sec:curvature}). 
Indeed, such an invariant determines the local structure of the foliation $\fol$ near $p$. 
Roughly speaking, this establishes if the characteristic vector field has a saddle, a star node, a proper node, or a focus singularity at $p$ (see Figure~\ref{fig:fol-loc}).

\begin{theorem}\label{thm:K}
    Let $p\in C(\surf)$ be a non-degenerate characteristic point (i.e., $\hat K_p\neq-1$), let $\mathscr L_p$ be the set of leaves with $p$ as an endpoint and consider the operator $-\Delta_\ell$ defined in \eqref{lapl div grad}, with domain $\mathcal{C}^\infty_0(\ell)$. 
    \begin{itemize}
        \item If $\hat K_p> -3/4$, then $p$ is a focus and for all leaves $\filo\in \mathscr{L}_p$, the operator $-\Delta_\filo$ is not essentially self-adjoint;
        \item if $\hat K_p= -3/4$, then $p$ is a star node and for all leaves $\filo\in \mathscr{L}_p$, the operator $-\Delta_\filo$ is not essentially self-adjoint;
        \item If $\hat K_p\in(-7/9,-3/4)$, then $p$ is a proper node and for all leaves $\filo\in \mathscr{L}_p$, the operator $-\Delta_\filo$ is not essentially self-adjoint;
        \item If $\hat K_p\in(-1,-7/9]$, then $p$ is a proper node and if $\filo\in \mathscr{L}_p$, the operator $-\Delta_\filo$ is essentially self-adjoint at $p$ unless $\filo$ enters $p$ tangentially to the eigenspace corresponding to the largest eigenvalue;
        \item If $\hat K_p\in[-3,-1)$, then $p$ is a saddle and if $\filo\in \mathscr{L}_p$, the operator $-\Delta_\filo$ is essentially self-adjoint at $p$ unless $\filo$ enters $p$ tangentially to the eigenspace corresponding to the largest eigenvalue;
        \item If $\hat K_p<-3$, then $p$ is a saddle and for all leaves $\filo\in \mathscr{L}_p$, the operator $-\Delta_\filo$ is not essentially self-adjoint.
    \end{itemize}
\end{theorem}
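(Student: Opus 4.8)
The plan is to treat each leaf separately and to reduce $-\Delta_\filo$ to a one-dimensional Schrödinger operator whose potential has an inverse-square singularity at each endpoint, with a strength dictated by $\hat K_p$. As recalled in Section~\ref{sec: disi}, writing the disintegrated measure in the arc-length coordinate as $\mu_\filo=\rho(s)\ds$, the operator $\Delta_\filo=\div_{\mu_\filo}\nabla_\filo$ of \eqref{lapl div grad} reads $\partial_s^2+(\rho'/\rho)\partial_s$. The Liouville unitary $u\mapsto\rho^{1/2}u$ from $L^2(\filo,\mu_\filo)$ onto $L^2(\filo,\ds)$ then conjugates it to $\partial_s^2-V$ with $V=(\rho^{1/2})''/\rho^{1/2}$, the one-dimensional form announced in the introduction. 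By the limit-point/limit-circle dichotomy set up in Section~\ref{sec: ess-self}, $-\Delta_\filo$ is essentially self-adjoint precisely when it is so at each endpoint separately; since the endpoints of $\filo$ lie in $C(\surf)$, it suffices to analyse the singularity of $V$ at a single characteristic point $p$.

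The core step is the asymptotics of $\rho$ as the leaf approaches $p$. I would use the invariant $\hat K_p$ of \cite{barilariInduced2022} (see Section~\ref{sec:curvature}) to fix the linearization $L=(D\XSunnorm)_p$ of the characteristic vector field, so that near $p$ the leaves are the integral curves of $\dot x=\la_1 x+\dots$, $\dot y=\la_2 y+\dots$ with eigenvalues $\la_1,\la_2$. The claim to establish is that along a leaf entering $p$ tangentially to the eigendirection of $\la_i$ one has $\rho(s)\sim s^{\alpha_i}$ with the scale-invariant exponent $\alpha_i=1+\la_j/\la_i$, $j\neq i$. Granting this, $V(s)\sim c_i\,s^{-2}$ with $c_i=\tfrac14\alpha_i(\alpha_i-2)$, and the classical criterion that $-\partial_s^2+c\,s^{-2}$ is essentially self-adjoint iff $c\ge 3/4$ (used already for Proposition~\ref{prop:heisenberg-sa}) becomes $\alpha_i\ge 3$ or $\alpha_i\le -1$, i.e. $\la_j/\la_i\ge 2$ or $\la_j/\la_i\le -2$. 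The exponent is confirmed by the model of Section~\ref{hyper para}, where $\la^\pm=\tfrac12\pm a$, $\mu_x=x^{1/\la^-}\,dx$, $\alpha_x=1/\la^-=1+\la^+/\la^-$, and $c_-=\tfrac{1-2\la^-}{4(\la^-)^2}=\tfrac14\alpha_x(\alpha_x-2)$.

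It remains to match the two thresholds with $\hat K_p$ and to record which eigendirection a leaf enters by. A computation of the invariant in \cite{barilariInduced2022} gives $\hat K_p=\la_1\la_2/(\la_1+\la_2)^2-1$; setting $q=\la_{\max}/\la_{\mathrm{other}}$ this is $\hat K_p=q/(1+q)^2-1$, whence $\hat K_p>-3/4$ forces non-real eigenvalues (focus), $\hat K_p=-3/4$ gives $q=1$ (star node), $-1<\hat K_p<-3/4$ gives $q>1$ (proper node), $\hat K_p=-1$ is the excluded degenerate value $\det L=0$, and $\hat K_p<-1$ gives $q<0$ (saddle). For a node the generic leaf is tangent to the smaller eigendirection, with $\alpha=1+q$, so it is essentially self-adjoint exactly when $q\ge 2$, that is $\hat K_p\le -7/9$; the exceptional separatrix along the largest eigendirection has $\alpha=1+1/q\in(1,2]$ and is never essentially self-adjoint, which is the clause ``unless $\filo$ enters $p$ tangentially to the eigenspace corresponding to the largest eigenvalue''. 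For a saddle only the two separatrices reach $p$; the one not tangent to the top eigendirection has $\alpha=1+q$ with $q<0$, giving essential self-adjointness iff $q\le -2$, i.e. $\hat K_p\ge -3$, while the top-eigendirection separatrix again fails (with $\alpha=1+1/q\in(0,1)$). Reading off the six ranges of $\hat K_p$ then reproduces the statement.

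The main obstacle will be the asymptotic relation $\rho(s)\sim s^{\alpha_i}$ for a \emph{generic} leaf, rather than for the separatrices already visible in the explicit model: this demands controlling the nonlinear characteristic flow near $p$ together with the transverse disintegration of $\mu$, and in particular isolating the slower decay along the top eigendirection that produces the exceptional leaf. The focus case requires a separate argument: there the eigenvalues are complex, the leaves spiral into $p$ making infinitely many turns in finite arc-length, so $\rho$ oscillates, the endpoint is of limit-circle type, and $-\Delta_\filo$ is never essentially self-adjoint, in agreement with $\hat K_p>-3/4$.
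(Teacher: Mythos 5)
Your strategy is essentially the paper's own: conjugate $\Delta_\filo=\div_{\mu_\ell}\nabla_\ell$ to a one-dimensional Schrödinger operator by the Liouville transform (this is Lemma~\ref{lem:potential}), read off an inverse-square coupling constant from the growth exponent of the disintegrated measure at the characteristic point, and convert the classical threshold $c\ge 3/4$ into a condition on the eigenvalue ratio, hence on $\widehat K_p$. Your arithmetic is fully consistent with the paper: your exponent $\alpha_i=1+\la_j/\la_i$ is exactly the paper's $1/\la_i$ under the normalization $\la_1+\la_2=1$ of Proposition~\ref{prop:cannarsa-theorem}; your $c_i=\tfrac14\alpha_i(\alpha_i-2)$ agrees with the paper's $(1-2\la)/(4\la^2)$; the identity $\widehat K_p=\la_1\la_2/(\la_1+\la_2)^2-1$ is correct; and the tangency analysis (generic node leaves along the slow eigendirection, the exceptional leaves and the saddle separatrix along the fast one) reproduces all six ranges, including the ``unless'' clauses.

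There are, however, two genuine gaps. First, the asymptotics $\rho(s)\sim s^{\alpha_i}$, which you yourself flag as the main obstacle, is exactly what the paper imports from \cite{barilariStochastic2021} as Proposition~\ref{prop:b-asymp} --- and in a crucially stronger form than a leading-order equivalence: $b(s)=\tfrac{1}{\la s}+\eta(s)$ (resp.\ $\tfrac2s+\eta(s)$ for focus/star node) with $\eta$ \emph{smooth up to} $s=0$, so that $V(s)=\tfrac{c}{s^2}+\tfrac{\zeta(s)}{s}$ with $\zeta$ smooth. Second --- and this is the step that would actually fail --- a leading-order statement $V(s)\sim c\,s^{-2}$ cannot decide the borderline cases. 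The criterion of \cite[Theorem~X.10]{reedFourier2007} gives limit point when $V\ge \tfrac34 s^{-2}$ and limit circle when $V\le(\tfrac34-\epsilon)s^{-2}$; when $c=3/4$ \emph{exactly}, the Coulomb remainder $\zeta(s)/s$ (in particular with $\zeta(0)<0$) places the potential in neither clause. These borderline values are precisely $\widehat K_p=-7/9$ (node, $\alpha=3$) and $\widehat K_p=-3$ (saddle, $\alpha=-1$), which the theorem assigns to the self-adjoint side via the closed interval endpoints. The paper's Lemma~\ref{lem:ess self adj} exists exactly for this: a KLMN-type perturbation argument showing that the $\zeta(s)/s$ term does not destroy essential self-adjointness when $c\ge 3/4$, together with a comparison on a small interval when $c<3/4$. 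Without some control of this subleading term your proof cannot place the endpoint cases, and even in the open ranges it needs at least the strict-inequality comparison. A minor point: your heuristic for the focus (spiralling makes $\rho$ oscillate, hence limit circle) is not how the paper argues; by Proposition~\ref{prop:b-asymp} the focus and star-node cases give $b=2/s+\eta(s)$ with $\eta$ smooth, i.e.\ $c=0$ with a Coulomb remainder, and non-self-adjointness follows by the same one-dimensional analysis as in the node case.
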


\begin{remark}
    In the above, when speaking about eigenvalues and eigenspaces, we are considering the linearization of the characteristic vector field at $p$. See Remark~\ref{rmk:linearization}.
\end{remark}

We now adopt a global perspective to the self-adjointness problem by considering an operator $\bDelta$ on the whole foliation $\fol$, starting from the leaf-wise operators $\Delta_\ell$. 
Such operator is defined by
\begin{equation}
    \label{direct integrals}
    \bDelta=\int^\oplus_\fol \Delta_\ell \, d\nu(\ell) 
    \qquad\text{acting on}\qquad
    \int^\oplus_\fol L^2(\ell,d\mu_\ell) \, d\nu(\ell),
\end{equation}
where $\nu$ is a measure on the set of leaves of the foliation $\fol$
(see Section~\ref{sec: sa ext}.)
One can then consider the corresponding Schrödinger equation,
\begin{equation}
    \label{schrod}
    i \partial_t \Psi_t = - \bDelta \Psi_t.
\end{equation}

 The operator $\bDelta$ is symmetric on the direct integral $\int_\fol^\oplus \mathcal{C}^\infty_0(\ell)$ and admits self-adjoint extensions. Any such extension either decouples the leaves from one another, or couples some of them. A decoupling extension is constructed by imposing  suitable boundary conditions at every characteristic points {\em leaf by leaf}, namely by defining a specific self-adjoint extension for every $\Delta_\ell$.  Such extensions generate a decoupled dynamics, in the sense that every solution to the Schr\"odinger Equation (10) evolves inside each leaf separately from the rest of the foliation. On the contrary, coupling extensions can be defined by imposing, at some characteristic points, boundary conditions that link different leaves, thus producing coupled dynamics where leaves communicate.

  In the present paper we focus on the decoupled family, while the coupled one will be addressed in future works, together with the existence of an intrinsic measure $\nu$ on the set of leaves of $\fol$. However, we go deeper into this point of view in
  Section \ref{sec: sa ext}.
  

\subsection{Further remarks}

\paragraph{Heat diffusion}
We recall that the unitary transformation that we use to associate a potential $V_\ell$ to $\Delta_\ell$ cannot be used to study the associated heat diffusion. Indeed, while Schrödinger evolutions associated with unitarily equivalent operators remain equivalent too, the same is not true for the heat diffusion where one is interested in properties that are not invariant under these transformations (e.g. Markovianity).

In general, heat diffusion requires different techniques. In \cite{barilariStochastic2021}, the authors showed via probabilistic techniques, that the heat never reaches characteristic points of node and focus type, while it always reaches saddle type characteristic points. However,  Kirchhoff's conditions, admissible only for saddle type characteristic points, have not been studied for the heat equation, up to now.

\paragraph{Other intrinsic laplacians}
    The operator $\Delta_\ell$ studied in this paper is not the only possible intrinsic Laplacian on $\ell\in\fol$. Actually, since every leaf $\ell$ is endowed with a locally Euclidean metric, one could simply study the operator $\partial^2_s$, where $s$ is the arc-length parameter on $\ell$. 
    For the heat equation, this type of operators have been studied by Walsh \cite{walsh} in some special cases. For the Schrödinger equation this has not been studied yet.

    {As widely known (\cite{reedMethods1980}), the restriction $\partial^2_{s,0}$ of $\partial^2_s$ to the space $\mathcal{C}^\infty_0 (\ell)$ 
    is not essentially self-adjoint, unless $\ell$ is isomorphic to $\R$. However, one finds in this case the same dichotomy between the coupling and decoupling family of self-adjoint extensions that we sketched for $\bDelta$.} 
    {Consider for instance a characteristic point $p$ of saddle type. There are four leaves emanating from it, say $\ell_i, \ i = 1,2,3,4$, with corresponding arc-length $s_i$. One can then construct a Laplacian for the whole system by taking the direct sum of some  self-adjoint extensions of the operators $\partial^2_{s_i,0}, \ i = 1,2,3,4$, obtaining then a decoupling Laplacian. 
    Alternatively, one can  interpret $p$ in the language of quantum graphs as a {\em vertex} with four {\em edges} originating from it, so the overall Laplacian  can be singled out by using techniques of quantum graphs \cite{Kostrykin_Schrader_1999,berkolaikokuchment}, i.e. defining a coupling self-adjoint extension, e.g. the Krichhoff's one. }

    On the other hand,  characteristic points of focus and node type  can be understood as vertices emanating an infinite number of edges, that is not typical in quantum graphs, where vertices usually have a finite degree.
    Some cases have been investigated in \cite{mugnolo1}.
    
    Anyway, the present paper is devoted to the investigation of the operator $\Delta_\ell$ instead of $\partial_s^2$, since it appears to be more natural as it can be obtained exploiting a Riemannian approximation scheme (see \cite{barilariStochastic2021}).

\subsection{Structure of the paper}

In Section \ref{sec: volume} we introduce the geometric background, specifically surfaces embedded in 3-dimensional contact sub-Riemannian manifolds. We define the characteristic foliation and the notion of characteristic points. Following this, we introduce the $\laplaciano$ on a leaf, as detailed in Section \ref{sec: Lapl}, and explore its essential self-adjointness. Here we see how the notion of {essential }self-adjointness can be studied by looking locally at the endpoints of a leaf. In particular, we see that for infinite endpoints $\Delta_\ell$ is always locally essentially self-adjoint, while for a characteristic endpoint $p$, the {essential} self-adjointness of $\Delta_\ell$ at $p$ depends on $\widehat{K}_p.$
In Section \ref{sec: sa ext} we analyze the Laplacian as a global operator. In particular, we define an operator on the foliation that, on each leaf, corresponds to $\Delta_\ell$. We then investigate its self-adjoint realizations, focusing on those that yield dynamics confined on the leaves. This case is equivalent to studying self-adjoint extensions on a single leaf, which we approach using Sturm-Liouville theory. An alternative approach that produces the explicit expression of the deficiency spaces is the Von Neumann's theory, that we develop in a particular case.

In Appendix \ref{sec: disi} we collect some results about disintegration of measures on foliations. The purpose here is to show that the measure on the leaf $\ell$, that renders the $\laplaciano$ $\Delta_\ell$ symmetric, comes from the disintegration of the measure $\mu$ on the foliation.

\section{The geometric structure}
\label{sec: volume}
In the following, let $M$ be a smooth connected 3-dimensional manifold with a contact sub-Riemannian structure, i.e., a pair $(D,g)$, where $D$ is a vector distribution and $g$ is an inner product on $D$. In addition it is required that $D$ satisfies the H\"ormander condition, i.e., locally there exist two vector fields  $X_1,X_2$ belonging to $D$
\[
\operatorname{span}\left\{[X_1,X_2],X_1,X_2\right\}\big|_q=T_q M, \qquad \forall q\in M.
\]
The H\"ormander condition is equivalent to the local existence of a \emph{contact form}, i.e., a  one form $\omega\in \Omega^1(M)$ such that
\begin{equation}
\label{contact}
    D=\text{ker }\omega, \ \ \omega\wedge d\omega\ne 0.
\end{equation}
To lighten the exposition, we henceforth make the following assumption
\begin{center}
    \textbf{(H0)} \qquad
    The contact sub-Riemannian manifold is coorientable\\
    (i.e., \eqref{contact} holds globally).
\end{center}
Observe that this implies, in particular, that $M$ is orientable. The word coorientable comes from the fact that the existence of a global contact form is equivalent to the orientability, and hence triviality, of $TM/D$ (for more details see \cite{geiges-contact}).
We will discuss in Remark~\ref{rmk:non-coorientable} the non-coorientable case.

Note that if $\omega$ is a one-form satisfying \eqref{contact}, then so is also $f\omega$ for any smooth never-vanishing function $f$. Thanks to the presence of the metric $g$, we normalize $\omega$ canonically by imposing that $d\omega$ restricted to $D$ is the Euclidean volume given by $g$ on the distribution.

Given a 3-dimensional contact sub-Riemannian manifold, one can use the contact form $\omega$ to define a canonical vector field $X_0$ that is transversal to the vector distribution in each point. This is called the \textit{Reeb vector field} and it is defined formally as the only vector field $X_0$ for which
\[
\omega(X_0)\equiv 1 
\quad\text{and}\quad
d\omega(X_0,\cdot)=0.
\]
Using this vector field, one can define a volume on $M$, called the Popp volume, as the unique 3-form $\mathcal{P}$ such that $\mathcal{P}(X_1,X_2,X_0)=1$, where $X_1,X_2$ is an orthonormal frame of $(D,g)$ and $X_0$ is the Reeb vector field. 
In particular, thanks to the above normalization, the Popp volume coincides with $d\omega\wedge \omega$.

Let now $S$ be a smooth surface embedded in $M$ (actually, $\mathcal{C}^3$ regularity would be sufficient). 
Recall that $C(S)$ is the set of characteristic points, i.e., the set of points $p\in S$ such that $D_p= T_p S$. All along the paper we assume that
\begin{center}
    The set $C(S)$ is made of isolated points.
\end{center}
We also make the following assumption
\begin{center}
\textbf{(H1)}\qquad
    $S$ is orientable.
\end{center}

Then, the surface volume form defined in \eqref{hor unit norm} using the Popp volume $\mathcal{P}$ is global on $S\setminus C(S)$. Furthermore, it can be extended to a measure on $S$ by continuity, see Proposition~\ref{prop: mu va a zero}.

\subsection{The characteristic foliation}
\label{sec:characteristic-foliation}
In Section \ref{sec: intro} we defined the map $\dir :\surf\setminus C(S) \ni q\mapsto D_q\cap T_q\surf$, which associated to each non-characteristic point $q$ a direction, called \emph{characteristic direction}, corresponding to the intersection $D_q\cap T_qS$. 
The integral curves of $\dir$ define the \emph{characteristic foliation} $\fol$ of $S\setminus C(S)$, made of $1$-dimensional leaves, each denoted by $\filo$.

Recall that each leaf $\filo$ is endowed with a locally Euclidean metric, inherited from the Riemannian metric on $D$. Hence, $\ell$ is isometric to one of the following:
\begin{itemize}
    \item a circle;
    \item the real line $\mathbb{R}$;
    \item the half-line $(0,+\infty)$;
    \item a finite segment $(a_0,a_1)$ with $-\infty<a_0<a_1<\infty$.
\end{itemize}

We would now like to define, locally, a class of vector fields which span such characteristic direction.

\begin{definition}
\label{def cvf}
    Let $S$ be a smooth surface embedded in a 3D contact sub-Riemannian manifold $M$ and let $U$ be an open subset of $S$.
    We say that a smooth vector field $X$, defined on $U$, is a \emph{characteristic vector  field} for $S$ on $U$ if 
    \[
    \operatorname{span}X(q)=
    \begin{cases}
        \{0\} & \text{if }q\in C(S) \\
        D_q\cap T_qS& \text{otherwise,}
    \end{cases}
    \]
and it holds div$X(p)\ne 0$ for each $p\in C(S)$.
\end{definition}

Locally, a characteristic vector field always exists. Indeed, if $S$ is locally described as the zero locus of a smooth function $u$, with $du\neq 0$, given  an orthonormal frame $\{X_1,X_2\}$ for $D$, the vector field
\begin{equation}
    \label{eq:local-char-vf}    
    X = (X_2u)X_1 - (X_1u)X_2,
\end{equation}
is characteristic. Observe, in particular, that $\operatorname{div} X(p) = [X_1,X_2]u(p) = X_0 u(p)$ and hence is non vanishing if $p\in C(S)$.

Thanks to \textbf{(H0)} and \textbf{(H1)}, there exists a global characteristic vector field (see, e.g., \cite{geiges-contact}).
We additionally require the following:
\begin{center}
    \textbf{(H2)}\qquad
    The characteristic vector field is complete.
\end{center}
This choice is meant to simplify the exposition. Actually, under this assumption any finite endpoint of a leaf is a characteristic point (see Proposition 1.3 of  \cite{barilariInduced2022}).

Normalizing a global characteristic vector field, one obtains a unitary vector field 
\[\XS=\frac{X}{\norm{X}} \qquad \text{on } S\setminus C(S).\] 
Such vector field is called the \textit{unitary characteristic vector field}, and is independent of the starting characteristic vector field $X$, up to a sign. 
From \eqref{eq:local-char-vf}, locally one has

\begin{equation}
    \label{XS}
    \XS=\pm\frac{(X_2u)X_1-(X_1u)X_2}{\sqrt{(X_1u)^2+(X_2u)^2}}.
\end{equation}

\begin{remark}
    \label{rmk:non-coorientable}
    When at least one between \textbf{(H0)} and \textbf{(H1)} is not satisfied, the existence of a global characteristic vector field is not guaranteed.  
    Nevertheless, the map $\dir$ and the characteristic foliation $\fol$ are still well defined. In particular, each leaf $\ell$ is still endowed with a Riemannian structure.
    Moreover, the surface measure $\mu$ is not in general a 2-form, but only a density.
\end{remark}

\subsection{Local structure near characteristic points}
\label{sec:curvature}
We now introduce the curvature-like invariant $\widehat{K}_p$ (first defined in \cite{barilariInduced2022}), which is a real number that is associated to each characteristic point and classifies the qualitative behaviour of the characteristic foliation near those point. 

Since this section is concerned with local considerations, assumptions \textbf{(H0)}, \textbf{(H1)}, and \textbf{(H2)} are not necessary.

\begin{definition}
Let $p\in C(S)$ be a characteristic point of $S$. Suppose that $S$ is described, in a neighbourhood of $p$, as the zero locus of a smooth function $u$, with $du\ne 0$. Then, the curvature of $p$ is 
\[
\widehat{K}_p=-1+\frac{\text{det Hess}\,u (p)}{[X_1,X_2]\,u(p)^2},
\]
where $\{X_1, \, X_2\}$ is an orthonormal frame of $D$ with respect to $g$.
\end{definition}

\begin{remark}
    This quantity is intrinsic in the sense that it does not depend neither on the choice of the function $u$ nor on the choice of the orthonormal frame $X_1,X_2$. 
    Moreover, it can be alternatively defined via a Riemannian approximation scheme, as the limit of the ratio between the Gaussian curvature and a normalizing factor, which is the determinant of a bilinear form defined on the vector distribution, see \cite{barilariInduced2022}.
\end{remark}

The following characterization of $\widehat{K}_p$ is proved in \cite[Corollary~4.3]{barilariInduced2022}.

\begin{proposition}
    \label{prop:cannarsa-theorem}
    Let $\lambda_1, \lambda_2$ be the eigenvalues of $\Hess u(p)\circ J$, where $J$ is the operator defined by $g(X,J(Y)) = d\omega(X,Y)$ for all $X,Y\in D$ and $u$ is a smooth function that defines $\surf$ locally around $p$. 
    Then, up to multiplying $u$ by a constant, one can assume that $\lambda_1+\lambda_2=[X_1,X_2]u(p)=1$, and one has
    \begin{equation}
        \lambda_{1,2}=\frac{1}{2} \pm \sqrt{-\frac{3}{4}-\widehat{K}_p}.
    \end{equation}
\end{proposition}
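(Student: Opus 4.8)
The plan is to reduce the whole statement to computing the trace and determinant of the $2\times 2$ endomorphism $A:=\Hess u(p)\circ J$ of $D_p$, and then to invoke the quadratic formula $\lambda_{1,2}=\tfrac12\operatorname{tr}A\pm\sqrt{\tfrac14(\operatorname{tr}A)^2-\det A}$ for the eigenvalues of a $2\times2$ matrix. Concretely, I would show that $\operatorname{tr}A=[X_1,X_2]u(p)$ and $\det A=\det\Hess u(p)$, after which the claimed expression follows by substituting the definition of $\widehat K_p$. First I would fix a local oriented orthonormal frame $\{X_1,X_2\}$ of $D$ near $p$ and represent $\Hess u(p)$ by the matrix $H=(X_iX_ju(p))_{i,j}$. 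Here it is essential that $p$ is characteristic: since $D_p=T_pS=\ker du_p$, one has $X_1u(p)=X_2u(p)=0$, and this is exactly what makes $H$ a well-defined endomorphism of $D_p$, independent of the frame extension (under a frame change the correction terms are proportional to the vanishing quantities $X_ku(p)$). Note that $H$ is in general \emph{not} symmetric, and that its antisymmetric part equals $\tfrac12(X_1X_2u-X_2X_1u)(p)=\tfrac12[X_1,X_2]u(p)$ times the standard rotation generator.

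Next I would pin down $J$ in this frame. By the canonical normalization of $\omega$ adopted in Section~\ref{sec: volume}, $d\omega$ restricted to $D$ is the $g$-area form, so $J$, defined by $g(X,JY)=d\omega(X,Y)$, is the $g$-rotation by $\pi/2$; in particular it is antisymmetric and $\det J=1$. The determinant of $A$ is then immediate, $\det A=\det H\cdot\det J=\det\Hess u(p)$. The trace is the heart of the matter: writing $H=H_{\mathrm{sym}}+H_{\mathrm{anti}}$, a direct computation gives $\operatorname{tr}(H_{\mathrm{sym}}J)=0$ (the product of a symmetric matrix with a $\pi/2$-rotation is trace-free), while $H_{\mathrm{anti}}J$ is a scalar multiple of the identity equal to $\tfrac12[X_1,X_2]u(p)$ on the diagonal, whence $\operatorname{tr}A=[X_1,X_2]u(p)$. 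As an orientation-robust consistency check, for the hyperbolic paraboloid of Section~\ref{hyper para} one finds $H=\left(\begin{smallmatrix}0&\lambda^-\\-\lambda^+&0\end{smallmatrix}\right)$, so $A$ has eigenvalues $\lambda^\pm=\tfrac12\pm a$, giving $\operatorname{tr}A=\lambda^-+\lambda^+=1=[X_1,X_2]u(p)$ and $\det A=\lambda^-\lambda^+=\tfrac14-a^2=\det H$.

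Finally I would collect the pieces. Replacing $u$ by $ku$ multiplies $H$, and hence $\operatorname{tr}A=[X_1,X_2]u(p)$, by $k$, while leaving $\det H/[X_1,X_2]u(p)^2$ — and therefore $\widehat K_p$ — unchanged; choosing $k$ so that $[X_1,X_2]u(p)=1$ realizes the normalization $\lambda_1+\lambda_2=\operatorname{tr}A=1$ of the statement. Under this normalization the eigenvalue formula reads $\lambda_{1,2}=\tfrac12\pm\sqrt{\tfrac14-\det H}$, and since $[X_1,X_2]u(p)^2=1$ the definition of the invariant gives $\det H=\det\Hess u(p)=1+\widehat K_p$. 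Substituting yields $\tfrac14-\det H=-\tfrac34-\widehat K_p$, which is precisely the claimed identity. The one genuinely delicate point is the trace computation: it requires correctly reading off the matrix of $J$ from the normalization and orientation conventions of $\omega$, together with the convention for raising an index of the bilinear form $\Hess u$ and the order of composition in $\Hess u(p)\circ J$, so that these signs combine to produce $+[X_1,X_2]u(p)$ rather than its negative; once this is fixed, recognizing that only the antisymmetric part of the non-symmetric Hessian survives in $\operatorname{tr}(\Hess u(p)\circ J)$ is the key observation, and the rest is routine linear algebra.
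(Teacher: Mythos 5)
Your proof is correct, but note that the paper itself does not prove Proposition~\ref{prop:cannarsa-theorem}: it imports the statement verbatim from \cite[Corollary~4.3]{barilariInduced2022}, so your argument is not a variant of an internal proof but a self-contained, elementary replacement for that citation. The reduction to $\operatorname{tr}A$ and $\det A$ for $A=\Hess u(p)\circ J$ is sound: the vanishing of $X_1u(p),X_2u(p)$ at the characteristic point makes the frame Hessian $H$ well defined on $D_p$; $\det J=1$ gives $\det A=\det\Hess u(p)$; the symmetric part of the non-symmetric $H$ is killed in the trace by the $\pi/2$-rotation $J$, while the antisymmetric part contributes $\pm[X_1,X_2]u(p)$; and the quadratic formula together with the definition of $\widehat K_p$ then yields the displayed identity, an argument that goes through unchanged in the complex-eigenvalue (focus) regime. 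Two remarks on the point you flag as delicate. First, the sign ambiguity $\operatorname{tr}A=\pm[X_1,X_2]u(p)$ is genuinely harmless for the eigenvalue formula: $\widehat K_p$ involves only $[X_1,X_2]u(p)^2$, and the normalizing constant multiplying $u$ may be taken negative, so conventions affect only the cosmetic claim that $\lambda_1+\lambda_2$ equals $+[X_1,X_2]u(p)$ rather than its negative. Second, you can settle that sign from the paper's own statements instead of re-deriving the orientation conventions: Remark~\ref{rmk:linearization} asserts $\Hess u(p)\circ J=DX(p)$ for the characteristic vector field $X$ of \eqref{eq:local-char-vf}, and the paper observes immediately after \eqref{eq:local-char-vf} that $\operatorname{div}X(p)=[X_1,X_2]u(p)$; since $\operatorname{tr}DX(p)=\operatorname{div}X(p)$, the plus sign is forced, in agreement with your hyperbolic-paraboloid consistency check. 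What the citation buys the paper is brevity; what your computation buys is self-containedness and an explicit identification of the two invariants $\operatorname{tr}A$ and $\det A$ that underlie the classification of Proposition~\ref{prop:classification-foliation}.
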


\begin{remark}
    \label{rmk:linearization}
    Since $\Hess u(p)\circ J = DX(p)$, where $X$ is the characteristic vector field defined in \eqref{eq:local-char-vf}, when $\lambda_{1,2}$ are real, the corresponding eigenspaces determine the directions in which the foliation approaches the characteristic point.
    Moreover, the normalization $\lambda_1+\lambda_2=1$ corresponds to the choice \[X = \frac{(X_2 u) X_1 - (X_1 u)X_2}{X_0u},\] which is intrinsic in a neighbourhood of $p$ (see \cite[Remark~4.4]{barilariInduced2022}). 
\end{remark}

Depending on the value of the above curvature one can deduce the local qualitative behaviour of the characteristic foliation around a characteristic point (see Figure~\ref{fig:fol-loc}).
In particular, we have the following result (see \cite[Corollary~4.5]{barilariInduced2022}).
Observe that characteristic points of circle type are excluded, as every 3-dimensional contact sub-Riemannian manifold is locally tight (see \cite{geiges-contact}).

\begin{figure}
    \centering
     \begin{minipage}
        {.24\textwidth}
        \includegraphics[width=\textwidth]{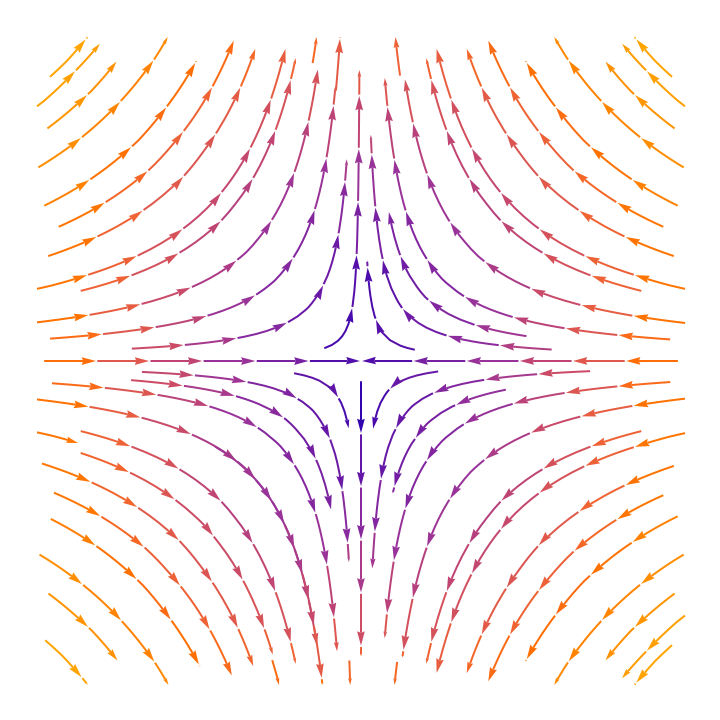}\\
        \centering 
        $\widehat{K}_p<-1$\\
        Saddle
    \end{minipage}
     \begin{minipage}
        {.24\textwidth}
        \includegraphics[width=\textwidth]{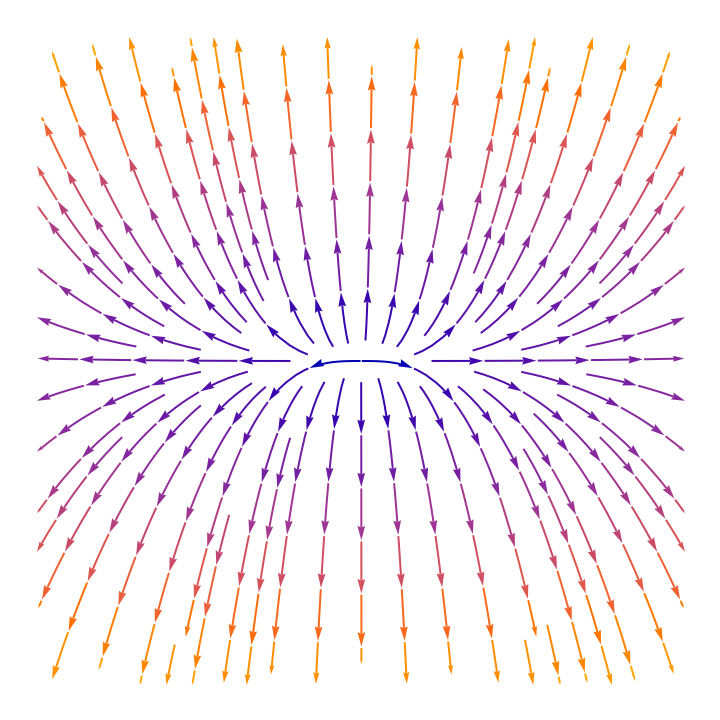}\\
        \centering 
        $\widehat{K}_p\in(-1,-\frac{3}{4})$\\
        Proper node
    \end{minipage}
     \begin{minipage}
        {.24\textwidth}
        \includegraphics[width=\textwidth]{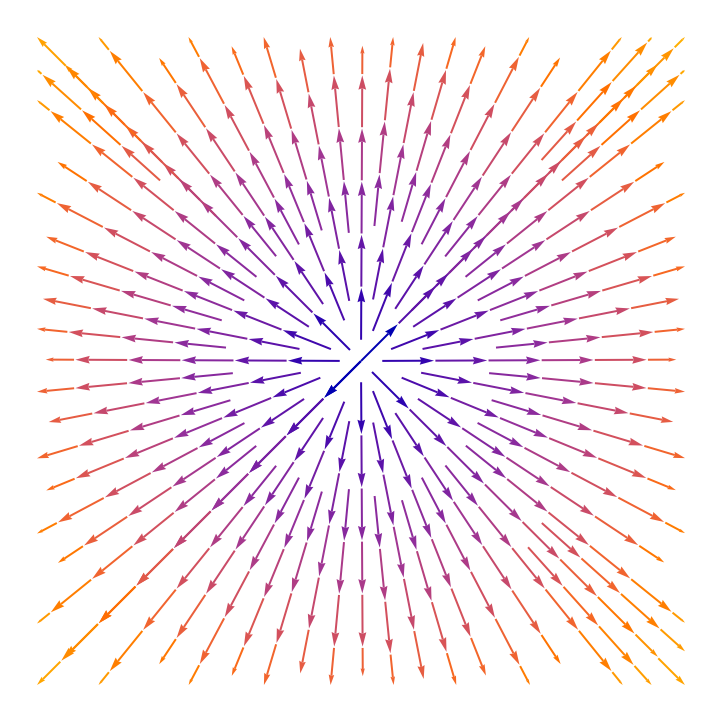}\\
        \centering 
        $\widehat{K}_p=-\frac{3}{4}$\\
        Star node
    \end{minipage}
     \begin{minipage}
        {.24\textwidth}
        \includegraphics[width=\textwidth]{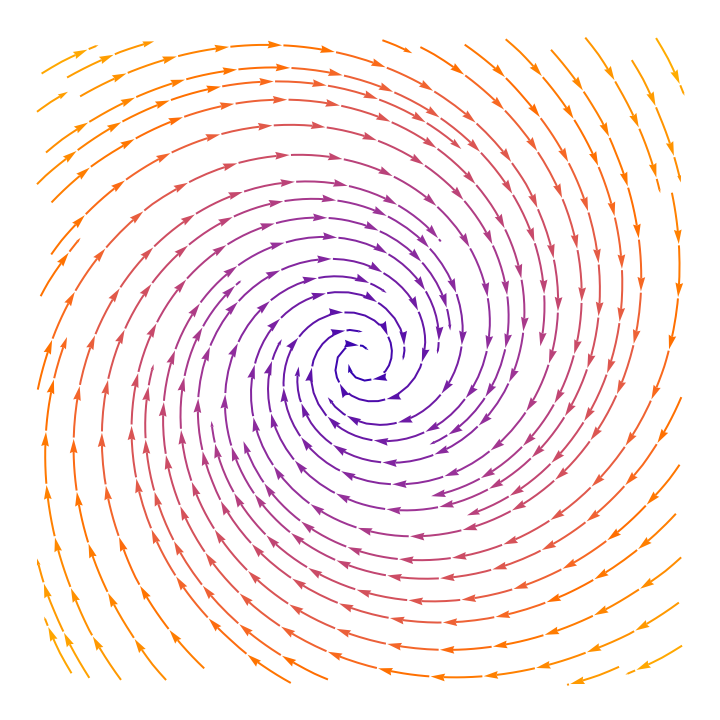}\\
        \centering 
        $\widehat{K}_p>-\frac{3}{4}$\\
        Focus
    \end{minipage}
    \caption{Qualitative picture for the characteristic foliation at an isolated characteristic point, with the corresponding values for $\widehat{K}_p$. From left to right, we recognise a saddle, a proper node, a star node and a focus.
    }
    \label{fig:fol-loc}
\end{figure}

\begin{proposition}
    \label{prop:classification-foliation}
    Let $S$ be a surface embedded in a 3D contact sub-Riemannian manifold and let $p$ be a non-degenerate characteristic point of $S$, i.e., $\widehat{K}_p\neq -1$. Then, in a neighborhood of $p$ the characteristic foliation $\fol$ is $C^1$-conjugate to
    \begin{itemize}
        \item a saddle if $\widehat{K}_p\in (-\infty,-1)$;
        \item a proper node if $\widehat{K}_p\in (-1,-3/4)$; 
        \item a star node if $\widehat{K}_p=-3/4$;
        \item a focus if $\widehat{K}_p\in (-3/4,\infty)$.
    \end{itemize}
\end{proposition}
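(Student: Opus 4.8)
The plan is to recover the local structure of $\fol$ near $p$ from the linearization of the characteristic vector field at $p$, and then transport the resulting linear phase portrait to the nonlinear foliation by a $C^1$-linearization theorem. I would first fix, as in \eqref{eq:local-char-vf}, a local defining function $u$ and an orthonormal frame $\{X_1,X_2\}$, so that near $p$ the leaves of $\fol$ are exactly the unparametrized integral curves of $X=(X_2u)X_1-(X_1u)X_2$; this field vanishes only at $p$, since characteristic points are isolated. By Remark~\ref{rmk:linearization} its linearization is $DX(p)=\Hess u(p)\circ J$, whose eigenvalues are given by Proposition~\ref{prop:cannarsa-theorem}, namely $\lambda_{1,2}=\tfrac12\pm\sqrt{-3/4-\widehat{K}_p}$ after the intrinsic normalization $\lambda_1+\lambda_2=[X_1,X_2]u(p)=1$. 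In particular $\lambda_1\lambda_2=\det DX(p)=1+\widehat{K}_p$, which is nonzero exactly because $p$ is non-degenerate; since also $\lambda_1+\lambda_2=1\ne 0$, no eigenvalue lies on the imaginary axis and the singularity is hyperbolic in all cases.

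The classification then reduces to bookkeeping the discriminant $-3/4-\widehat{K}_p$ and the product $1+\widehat{K}_p$, the sum being fixed to $1>0$. For $\widehat{K}_p<-1$ the eigenvalues are real with negative product, hence of opposite sign, giving a saddle. For $-1<\widehat{K}_p<-3/4$ they are real, distinct and both positive, giving a proper node. For $\widehat{K}_p>-3/4$ the discriminant is negative, so they are complex conjugate with real part $1/2\ne 0$, giving a focus. The borderline $\widehat{K}_p=-3/4$ gives the double eigenvalue $1/2$, and here one must still check that $DX(p)=\tfrac12\,\mathrm{Id}$ in order to obtain a true star node rather than a degenerate node. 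Decomposing $\Hess u(p)=S+A$ into symmetric and antisymmetric parts and using that every antisymmetric $2\times 2$ matrix is a multiple of $J$ with $J^2=-\mathrm{Id}$, one gets $DX(p)=SJ+\tfrac12\mathrm{Id}$ with $SJ$ traceless and $\det(SJ)=\det S=\tfrac34+\widehat{K}_p$; at $\widehat{K}_p=-3/4$ this makes $SJ$ nilpotent, the star-node case being exactly $S=0$.

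To pass from the linear model to $\fol$, I would invoke the two-dimensional $C^1$-linearization theorem of Hartman: since $DX(p)$ is hyperbolic, the flow of $X$ is locally $C^1$-conjugate to that of $DX(p)$, and this conjugacy maps orbits to orbits, hence carries $\fol$ to the linear characteristic foliation, which is a saddle, proper node, star node, or focus according to the cases above. This is the content of \cite[Corollary~4.5]{barilariInduced2022}.

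I expect the main obstacle to be precisely the upgrade from topological (Hartman--Grobman) to $C^1$ conjugacy: this is what forces the two-dimensional refinement and a control of the resonances $\lambda_2/\lambda_1\in\mathbb{N}$ occurring in the node range, where smooth linearization may fail. A secondary delicate point is the boundary $\widehat{K}_p=-3/4$, where $\widehat{K}_p$ alone does not detect whether $DX(p)$ is a scalar matrix, so that the star-node conclusion requires the additional structural input $S=0$ rather than merely $\det S=0$.
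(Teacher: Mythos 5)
Your route --- reduce to the linearization $DX(p)=\Hess u(p)\circ J$, read off the eigenvalues via Proposition~\ref{prop:cannarsa-theorem}, and transport the linear phase portrait by Hartman's planar $C^1$-linearization theorem --- is the natural one, and it is presumably the argument behind the cited result: note that the paper itself contains no proof of this proposition, but only quotes \cite[Corollary~4.5]{barilariInduced2022}. Your treatment of the saddle, proper-node and focus cases is correct: $\lambda_1+\lambda_2=1$ and $\lambda_1\lambda_2=1+\widehat{K}_p\neq 0$ give hyperbolicity, and the resonance issue you worry about is not an obstruction at this regularity, since Hartman's two-dimensional theorem yields $C^1$ conjugacy for any hyperbolic singularity of a smooth planar vector field; resonances $\lambda_2/\lambda_1\in\mathbb{N}$ only obstruct $C^2$ or smooth linearization.

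The genuine gap is exactly the point you flag and then leave open: the borderline case $\widehat{K}_p=-3/4$. As your own algebra shows, $\widehat{K}_p=-3/4$ only gives $\det S=0$, i.e.\ that the traceless part $SJ$ of $DX(p)$ is nilpotent, whereas the star-node conclusion needs $SJ=0$; and this missing implication is false, so no further argument can recover it from $\widehat{K}_p$ alone. Concretely, in the Heisenberg group of Section~\ref{hyper para} take $u=z-\tfrac{t}{2}x^2$ with $t\neq 0$. Then $X_1u=-tx-\tfrac{y}{2}$, $X_2u=\tfrac{x}{2}$, so the origin is an isolated characteristic point, and the horizontal Hessian $\bigl(X_iX_ju(0)\bigr)_{ij}$ satisfies
\[
\Hess u(0)=\begin{pmatrix}-t & \tfrac12\\ -\tfrac12 & 0\end{pmatrix},
\qquad [X_1,X_2]u\equiv 1,
\qquad \widehat{K}_0=-1+\tfrac14=-\tfrac34,
\]
but in the surface coordinates $(x,y)$ the characteristic vector field reads $\dot x=\tfrac{x}{2}$, $\dot y=tx+\tfrac{y}{2}$, whose linearization is a Jordan block: the foliation is an \emph{improper} node, every leaf entering the origin tangent to the $y$-axis. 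This is not $C^1$-conjugate to a star node: a $C^1$ conjugacy $\Phi$ acts on the set of limiting tangent directions of leaves at the singular point through the invertible map $D\Phi(0)$, and this set is a single point of the projective line for the improper node but the whole projective line for the star node. Hence at $\widehat{K}_p=-3/4$ the invariant cannot distinguish star from improper nodes, and the third item can only be established if one either reads ``star node'' as ``node with a single eigenvalue (star or improper)'' or adds the hypothesis that the symmetric part of $\Hess u(p)$ vanishes. Your suspicion that the star-node conclusion ``requires the additional structural input $S=0$'' is precisely right: it is an extra hypothesis, not something derivable in your proof, so as written the argument does not prove the statement in the borderline case.
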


The case when $\widehat{K}_p$ is equal to -1 is degenerate and the behaviour of the foliation is not uniquely defined by $\widehat{K}_p$, for example it can be half saddle-like and half node-like. In this case the Hessian of $u$ is degenerate. It is well known that generically characteristic points are non-degenerate. We will exclude this case in this paper (see \cite{barilariInduced2022}).

\section{The Laplacian on leaves}
\label{sec: Lapl}

Recall that the Laplace-Beltrami-like operator $\Delta_\ell$ on a given leaf $\ell$ of $\fol $ is defined in \eqref{lapl div grad} as $\Delta_\ell=\div_\mu \circ \nabla_\ell$. Here, $\nabla_\ell$ is the Riemannian gradient associated with the locally Euclidean structure on $\ell$, and $\div_\mu$ is the divergence w.r.t.~the measure $\mu$ induced on $S\setminus C(S)$ by the Popp volume on $M$.
The unitary characteristic vector field $\XS$ defined in \eqref{XS} is actually an orthonormal frame for the Euclidean structure on each leaf $\ell\in \fol$. Hence,
\begin{equation}
\label{lapl char}
 \Delta_\filo = \text{div}_\mu\nabla_\ell = \XS^2 + \div_\mu(\XS) \XS.
\end{equation}
Notice that the above expression is well defined even if $\textbf{(H0)}$ and $\textbf{(H1)}$ are not satisfied, since it does not depend on the sign of $\XS$.

The purpose of this section is to study the {essential} self-adjointness of $\Delta_\ell$ on the different kind of 1-dimensional leaves introduced in Section~\ref{sec:characteristic-foliation}: circle, line, half-line and segment.  

\subsection{Local expression of $\Delta_\ell$}
\label{local b}

Fix the arc-length coordinate $s$ on $\ell$. 
Following the notation of \cite{barilariStochastic2021}  we denote by $b(s)$ the quantity $\operatorname{div}_\mu(\XS)$ computed w.r.t.~$s$. Then, we have
    \begin{equation}
    \label{lapl-s}
        \Delta_\filo = \partial_s^2 + b(s)\partial_s.
    \end{equation}
This is a symmetric operator on $L^2(\filo,\nu(s)\,ds)$, where $\nu$ is any measure on $\filo$, which satisfies $b = \partial_s(\log\nu)$. 
This measure $\nu$ can actually be obtained by disintegrating the surface measure $\mu$ along the leaves of $\fol$. For more details on this, see Section~\ref{sec: disi}.

We map the first order term in \eqref{lapl-s} to a potential via the unitary transformation $T : L^2(\filo,\mu_\ell\,ds) \to L^2(\filo)$ defined by $Tu = \sqrt{\mu_\ell}\,u$. 

\begin{lemma}
    \label{lem:potential}
    The operator $-\Delta_\ell$ is unitarily equivalent to the following operator on $L^2(\filo,ds)$:
    \begin{equation}
    \label{op:potential}
        L_\filo = -\partial_s^2 + V(s), \qquad V = \frac{1}{2}\partial_s b+ \frac14 b^2.
    \end{equation}
\end{lemma}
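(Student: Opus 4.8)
The plan is to compute explicitly the conjugation of the operator $-\Delta_\ell = -(\partial_s^2 + b\,\partial_s)$ by the unitary map $T$, and read off the resulting potential. The map $T : L^2(\ell,\mu_\ell\,ds)\to L^2(\ell,ds)$, $Tu = \sqrt{\mu_\ell}\,u$, is an isometry precisely because $\int |u|^2\,\mu_\ell\,ds = \int |\sqrt{\mu_\ell}\,u|^2\,ds$, so the two Hilbert spaces are identified and the relevant object is $T(-\Delta_\ell)T^{-1}$. Writing $\rho = \sqrt{\mu_\ell}$ (recall $b = \partial_s\log\mu_\ell$, equivalently $\mu_\ell = e^{\int b}$, so $\partial_s\rho/\rho = \tfrac12 b$), I would take a test function $u$, set $v = Tu = \rho u$, and compute $T(-\Delta_\ell)(T^{-1}v) = -\rho\,\partial_s^2(\rho^{-1}v) - b\,\rho\,\partial_s(\rho^{-1}v)$.

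First I would expand the two pieces using the Leibniz rule. For the second-order term,
\[
\rho\,\partial_s^2(\rho^{-1}v)
= \partial_s^2 v - 2\frac{\partial_s\rho}{\rho}\,\partial_s v
+ \left(2\frac{(\partial_s\rho)^2}{\rho^2} - \frac{\partial_s^2\rho}{\rho}\right)v,
\]
and for the first-order term,
\[
b\,\rho\,\partial_s(\rho^{-1}v)
= b\,\partial_s v - b\,\frac{\partial_s\rho}{\rho}\,v.
\]
Substituting $\partial_s\rho/\rho = \tfrac12 b$ into both expressions, the first-derivative terms combine as $-2(\tfrac12 b)\partial_s v + b\,\partial_s v = 0$, so the conjugated operator is again a pure Schrödinger operator $-\partial_s^2 + V$ with no drift, as expected. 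It remains to collect the zeroth-order coefficient.

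The potential is then $V = -\bigl(2\tfrac{(\partial_s\rho)^2}{\rho^2} - \tfrac{\partial_s^2\rho}{\rho}\bigr) + b\,\tfrac{\partial_s\rho}{\rho}$, which I would simplify using $\partial_s\rho/\rho = \tfrac12 b$ and the consequence $\partial_s^2\rho/\rho = \tfrac12\partial_s b + \tfrac14 b^2$ (obtained by differentiating $\partial_s\rho = \tfrac12 b\rho$). Plugging in gives $V = -\bigl(2\cdot\tfrac14 b^2 - (\tfrac12\partial_s b + \tfrac14 b^2)\bigr) + \tfrac12 b^2 = \tfrac12\partial_s b + \tfrac14 b^2$, matching \eqref{op:potential}. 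I expect no serious obstacle here: the computation is a routine one-dimensional ground-state (Doob $h$-)transform, and the only point requiring a line of care is verifying that $T$ genuinely maps the core $\mathcal{C}^\infty_0(\ell)$ onto $\mathcal{C}^\infty_0(\ell)$ so that the unitary equivalence holds on the relevant domain — this follows because $\mu_\ell$ is smooth and non-vanishing on the open leaf $\ell$, hence $\sqrt{\mu_\ell}$ is a smooth positive multiplier that preserves compactly supported smooth functions in both directions.
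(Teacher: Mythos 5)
Your computation is correct: the drift terms cancel, the zeroth-order terms collect to $V=\tfrac12\partial_s b+\tfrac14 b^2$, and your remark that $\sqrt{\mu_\ell}$ is a smooth positive multiplier preserving $\mathcal{C}^\infty_0(\ell)$ settles the domain issue. The paper states this lemma without proof, treating exactly this conjugation by $T$ as routine, so your argument is the intended one and nothing further is needed.
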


In the following, we characterize the expression of $b(s)$, and hence $V(s)$, close to characteristic points.
\begin{proposition}
    \label{prop:b-asymp}
    Assume that $\ell\sim (0,a_1)$ and $0$ corresponds to the characteristic point. Then, there exist two functions $\eta,\zeta\in \mathcal{C}^\infty([0,a_1))$ smooth up to $s=0$, such that for $s\in (0,a_1)$, one has
    \begin{itemize}
        \item if $p$ is a focus or a star node, then
        \begin{equation}
            b(s) = \frac{2}{s} + \eta(s)
            \qquad\text{and}\qquad
            V(s) = \frac{\zeta(s)}s,
        \end{equation}
        \item if $p$ is a proper node or a saddle and $\filo$ is tangent to the $\lambda$-eigenspace of $\Hess\phi(p)\circ J$ at $p$, then
        \begin{equation}
            b(s) = \frac{1}{\lambda s} + \eta(s)
            \qquad\text{and}\qquad
            V(s) = \frac{1-2\lambda}{4\lambda^2}\frac1{s^2} + \frac{\zeta(s)}s.
        \end{equation}
    \end{itemize}
\end{proposition}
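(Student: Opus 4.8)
The plan is to reduce everything to the asymptotics of $b(s)=\div_\mu(\XS)$, since the potential is then obtained by the purely algebraic substitution $V=\frac12\partial_s b+\frac14 b^2$. Indeed, once $b(s)=\frac{1}{\la s}+\eta(s)$ is known with $\eta$ smooth up to $s=0$ (node/saddle case), expanding $V=\frac12 b'+\frac14 b^2$ produces the leading $s^{-2}$ coefficient $-\frac{1}{2\la}+\frac{1}{4\la^2}=\frac{1-2\la}{4\la^2}$, while every remaining contribution ($\frac12\eta'$, $\frac14\eta^2$, and the cross term $\frac{\eta}{2\la s}$) is of the form $\frac{(\text{smooth})}{s}$; collecting them into a single smooth $\zeta$ gives $V=\frac{1-2\la}{4\la^2}\frac{1}{s^2}+\frac{\zeta(s)}{s}$. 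For a focus or a star node one has $\la=\frac12$ (the common real part of the eigenvalues), the $s^{-2}$ coefficient vanishes, and the same computation yields $V=\zeta(s)/s$. Hence the entire content is the claimed expansion of $b$.

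To compute $b$, I would use the normalized characteristic field $X$ of Remark~\ref{rmk:linearization}, for which $DX(p)$ has eigenvalues $\la_1,\la_2$ and trace $1$, and write $\XS=X/\norm{X}$. Two structural facts drive the analysis. First, the Popp-induced density factorizes near $p$ as $m=\norm{X}\,\sigma$ with $\sigma$ smooth and strictly positive up to $p$; this should be read off from $\mu=\iota_N(\mathcal P)$ together with the fact that $\{N,\XS\}$ is an orthonormal frame of $D$ along $S$, and it is already visible in the Heisenberg model, where $m=\sqrt{\lambda_-^2 x^2+\lambda_+^2 y^2}=\norm{\XSunnorm}+\dots$. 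Second, writing $\mu=m\,\omega_0$ for a fixed smooth area form $\omega_0$ and using $X(\log\norm{X})=X(\norm{X})/\norm{X}$, the product rule $\div_\mu(f Y)=f\div_\mu Y+Y(f)$ gives
\[
b=\frac{\div_\mu X}{\norm{X}}-\frac{X(\norm{X})}{\norm{X}^2}=\frac{h+X(\log\norm{X})}{\norm{X}}-\frac{X(\norm{X})}{\norm{X}^2}=\frac{h}{\norm{X}},
\]
where the two $\norm{X}$-derivative terms cancel and $h:=\div_{\omega_0}X+X(\log\sigma)$ is smooth on $S$ with $h(p)=\operatorname{tr}DX(p)=\la_1+\la_2=1$.

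The next step is the behaviour of $\norm{X}$ along $\ell$. Parametrizing the leaf by the flow of $X$ and passing to arc length $s$, one checks that if $\ell$ enters $p$ tangentially to the $\la$-eigenspace then $\norm{X(\gamma(s))}=\la s\,(1+o(1))$, so that $b=h/\norm{X}\sim 1/(\la s)$, recovering the announced leading term (with $\la=\frac12$ for the star node). To upgrade this to $b=\frac{1}{\la s}+\eta$ with $\eta$ smooth, I would invoke the local normal form for $\fol$ near $p$ from Proposition~\ref{prop:classification-foliation} and \cite{barilariInduced2022}: in the node and saddle cases the leaf is a smooth curve reaching $p$ along an eigendirection, so both $\norm{X}\circ\gamma$ and $h\circ\gamma$ are smooth functions of $s$, with $\norm{X}\circ\gamma=\la s\cdot(\text{smooth, nonzero at }0)$ and $h\circ\gamma=1+O(s)$; dividing two such functions yields exactly $\frac{1}{\la s}+\eta(s)$ with $\eta$ smooth, and the $V$ formula follows as in the first paragraph.

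The hard part will be the focus (and, more mildly, the star node), where the leaf spirals into $p$ with infinitely many turns, so that $\gamma$ is not a smooth curve up to its endpoint and a priori $\norm{X}\circ\gamma$ and $h\circ\gamma$ could carry oscillatory corrections of size $\cos(\omega\log s)$ that would destroy the smoothness of $\eta$. Resolving this is the crux: one must exploit the rigidity of the induced data — that the \emph{same} metric governs arc length and $\norm{X}$, and that $m=\norm{X}\,\sigma$ — to show that these oscillations cancel, so that $\norm{X}\circ\gamma(s)=\frac12 s\,(1+O(s))$ and $h\circ\gamma(s)=1+O(s)$ hold with genuinely smooth remainders. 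I expect this to require the refined, conformal-type normalization of the characteristic foliation at a focus provided by \cite{barilariInduced2022}, after which the computation closes exactly as in the node case and gives $V=\zeta(s)/s$ with $\zeta$ smooth.
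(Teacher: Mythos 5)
Your reduction of $V$ to the asymptotics of $b$ is exactly the paper's: the paper also defines $\eta$ as the difference $b-\tfrac{1}{\lambda s}$ (resp.\ $b-\tfrac2s$) and obtains $\zeta=\tfrac{\eta}{2\lambda}+\tfrac s4\left(2\eta'+\eta^2\right)$ by the same algebra. Where you diverge is in how the asymptotics of $b$ are established: the paper simply cites Lemmas~3.1 and 3.2 of \cite{barilariStochastic2021} for the boundedness and smoothness of $\eta$ up to $s=0$, whereas you attempt a self-contained geometric derivation. Your identity $b=h/\norm{X}$, with $h$ smooth and $h(p)=\operatorname{tr}DX(p)=1$, obtained from the factorization $\mu=\norm{X}\,\sigma\,\omega_0$ and the cancellation of the $X(\log\norm{X})$ terms, is correct and is a genuinely nice structural observation that the paper never makes explicit. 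But the identity only shifts the problem to the behaviour of $\norm{X}\circ\gamma$ and $h\circ\gamma$ as functions of arc length, and it is precisely there that your argument does not close.

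First, the focus and star-node cases are not proved at all: you identify them as ``the crux'' and defer to a hoped-for normal form in \cite{barilariInduced2022}. This is a genuine gap, since the paper's proof of exactly this case consists in invoking the explicit computations of \cite{barilariStochastic2021}; without that citation or a substitute argument (showing that the $\cos(\omega\log s)$-type oscillations you correctly worry about do not enter $\eta$), the statement remains open in the case that is hardest.

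Second, your treatment of the proper node is flawed. You assert that ``in the node and saddle cases the leaf is a smooth curve reaching $p$ along an eigendirection.'' This is true for saddle separatrices and for the two leaves along the eigendirections of a node, but it is false for the continuum of generic leaves at a proper node: in the linear model $\dot x=\lambda_2x$, $\dot y=\lambda_1y$ with $0<\lambda_2<\lambda_1$, a generic leaf is the graph $y=Cx^{\lambda_1/\lambda_2}$, which is only finitely differentiable at the origin when $\lambda_1/\lambda_2\notin\mathbb{N}$. Hence $\norm{X}\circ\gamma$ need not be a smooth function of $s$ up to $s=0$, and the ``quotient of two smooth functions'' step collapses. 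The failure is not cosmetic: in the paper's own Heisenberg example, where $b=1/\norm{X}$ exactly, a direct expansion along $y=Cx^\alpha$, $\alpha=\lambda^+/\lambda^-$, gives $\eta(s)=-\tfrac{(C\alpha)^2(\alpha-1)}{\lambda^-(2\alpha-1)}\,s^{2\alpha-3}+\dots$, which is not $\mathcal{C}^\infty$ at $0$ for generic $\alpha$ and is even unbounded when $\alpha<3/2$. So for generic node leaves the soft argument cannot work, and the remainder estimate is exactly as delicate as in the focus case; this is why the paper leans on the detailed leaf-by-leaf computations of \cite{barilariStochastic2021} rather than on regularity of the leaves. (A minor additional point: for a saddle leaf tangent to the negative eigenvalue one has $\norm{X}\circ\gamma=\abs{\lambda}s\,(1+o(1))$, and recovering the signed coefficient $1/(\lambda s)$ requires tracking the orientation of $X$ relative to $\partial_s$.)
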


\begin{proof}
    Let $\eta(s) := b(s) - 2/s$ in the focus or star node case, and $\eta(s)=b(s)-1/(\lambda s)$ in the proper node or saddle case.     
    In \cite[Lemma~3.1 and Lemma~3.2]{barilariStochastic2021} it is shown that $\eta(s)=O(1)$ and that it is defined up to $s=0$. From the fact that $b(s)$ is smooth on $(0,a_1)$,  following the computations in \cite{barilariStochastic2021} one obtains that $\eta\in \mathcal{C}^\infty([0,a_1))$, as claimed.

    The expression of $V$ is readily obtained by the expression in Lemma~\ref{lem:potential}, which yields
    \begin{equation}\label{eq:zeta}
    \zeta(s) = \frac{\eta(s)}{2\lambda} + \frac{s}{4}\left(2\eta'(s)+\eta(s)^2\right).     
    \end{equation}
    In particular, $\zeta$ is smooth up to $s=0$.
\end{proof}

\begin{remark}
    By \eqref{eq:zeta} it follows that $\eta(0)=0$ if and only if $\zeta(0)=0$, and that $\eta\equiv0$ if and only if $\zeta\equiv 0$. In particular, this implies that whenever the remainder $\eta$ is zero in the expansion of $b$, the potential is a pure inverse square potential.
\end{remark}

\subsection{Essential self-adjointess of $\Delta_\ell$}
\label{sec: ess-self}

In this section we present the proof of Theorem~\ref{thm:K}.
We start by recalling the following well-known result relating the completeness of a Riemannian manifold with the self-adjointness of the divergence of the gradient independently on the choice of the volume (see for instance \cite{STRICHARTZ198348,Braverman_2002}).

\begin{theorem}
\label{riemannian}
    Let $(M,g)$ be a Riemannian manifold and let $\mu$ be a positive and smooth measure on $M$. If the metric $g$ is complete, then the operator $\Delta_\mu=\div_\mu\nabla_g$ is self-adjoint.
\end{theorem}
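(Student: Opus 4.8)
The statement is the classical essential self-adjointness of a weighted Laplacian on a complete manifold, so the plan is to show that $\Delta_\mu$, which is symmetric on $\mathcal{C}^\infty_0(M)$, is in fact essentially self-adjoint (equivalently, that its closure is self-adjoint). By von Neumann's deficiency-index theory it suffices to prove that the only $u\in L^2(M,\mu)$ solving $\Delta_\mu^* u=\lambda u$ with $\operatorname{Im}\lambda\neq 0$ is $u=0$. Since $\Delta_\mu$ is a second-order elliptic operator with smooth coefficients, any such $u$ is automatically smooth by elliptic regularity, and the eigenvalue equation $\Delta_\mu u=\lambda u$ holds classically.

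The heart of the argument is an energy estimate obtained by testing the equation against cutoffs adapted to the metric, and this is exactly where completeness enters. By the Hopf--Rinow theorem the distance function $r(x)=d(x,x_0)$ has relatively compact sublevel sets and is $1$-Lipschitz, so for a fixed $\phi\in\mathcal{C}^\infty(\R)$ with $\phi\equiv 1$ on $(-\infty,1]$ and $\phi\equiv 0$ on $[2,\infty)$, the functions $\chi_n:=\phi(r/n)$ are compactly supported, equal to $1$ on $B(x_0,n)$, and satisfy $|\nabla\chi_n|\le\|\phi'\|_\infty/n$ almost everywhere. (The mild non-smoothness of $r$ is handled by a standard mollification, or directly through the Lipschitz calculus.) First I would apply Green's identity $\int(\div_\mu V)w\,d\mu=-\int\langle V,\nabla w\rangle\,d\mu$ — which is just the defining adjointness of $\div_\mu$ and $\nabla$ — with the compactly supported test function $w=\chi_n^2\bar u$, obtaining
\[
\lambda\int_M\chi_n^2|u|^2\,d\mu=-\int_M\chi_n^2|\nabla u|^2\,d\mu-2\int_M\chi_n\,\bar u\,\langle\nabla u,\nabla\chi_n\rangle\,d\mu.
\]

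Taking the real part and absorbing the cross term via Young's inequality shows that $\int_M\chi_n^2|\nabla u|^2\,d\mu$ stays bounded uniformly in $n$, because the only obstruction is $\int|u|^2|\nabla\chi_n|^2\le(\|\phi'\|_\infty^2/n^2)\|u\|^2$, which is finite and in fact vanishes. Taking instead the imaginary part gives $\operatorname{Im}\lambda\int_M\chi_n^2|u|^2\,d\mu=-2\operatorname{Im}\int_M\chi_n\bar u\langle\nabla u,\nabla\chi_n\rangle\,d\mu$, whose right-hand side is controlled by Cauchy--Schwarz by $2\big(\int\chi_n^2|\nabla u|^2\big)^{1/2}\big(\int|u|^2|\nabla\chi_n|^2\big)^{1/2}$ and therefore tends to $0$. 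Since $\operatorname{Im}\lambda\neq0$ and $\chi_n^2|u|^2\uparrow|u|^2$ monotonically, this forces $\int_M|u|^2\,d\mu=0$, i.e. $u=0$, which is the desired triviality of the deficiency spaces.

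I expect the only genuine difficulty to be the bookkeeping in the energy estimate — keeping $\int\chi_n^2|\nabla u|^2\,d\mu$ under control long enough to send the $\nabla\chi_n$ error terms to zero — together with the minor technical point that $r$ is merely Lipschitz. Completeness is invoked in exactly one place, but an indispensable one: it produces the compactly supported cutoffs with vanishing gradient, which is precisely what guarantees that no contribution survives from ``the boundary at infinity.'' An alternative route, due to Chernoff, would instead derive essential self-adjointness from the finite propagation speed of the wave equation associated with $\Delta_\mu$; the cutoff/energy method sketched above is, however, the most self-contained.
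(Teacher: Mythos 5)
Your proof is correct; note, however, that the paper offers no proof of this theorem to compare against --- it is recalled as a well-known result, with the argument delegated to the cited literature (Strichartz; Braverman--Milatovic--Shubin). What you have written is precisely the classical Gaffney--Strichartz cutoff/energy proof that underlies those references: reduce essential self-adjointness on $\mathcal{C}^\infty_0(M)$ to triviality of the deficiency spaces, invoke elliptic regularity (valid since $\mu$ is smooth and positive, so $\Delta_\mu$ is elliptic with smooth coefficients) to make deficiency vectors classical solutions, then use completeness via Hopf--Rinow to build compactly supported cutoffs $\chi_n=\phi(r/n)$ with $|\nabla\chi_n|=O(1/n)$, take the real part of the Green identity to get the uniform bound on $\int_M\chi_n^2|\nabla u|^2\,d\mu$, and feed that bound into the Cauchy--Schwarz estimate for the imaginary part to force $u=0$. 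So you have reconstructed the standard proof of the cited result rather than found an alternative route, and the one delicate piece of bookkeeping --- that the energy bound must be established \emph{before} the imaginary-part estimate can be closed --- is handled in the right order. The two technical points you flag are indeed the only ones: the distance function is merely Lipschitz (mollify it, or note that compactly supported Lipschitz test functions are admissible in the Green identity by approximation), and the convergence $\int_M\chi_n^2|u|^2\,d\mu\to\|u\|_{L^2}^2$ holds by dominated convergence whether or not the $\chi_n$ are monotone in $n$.
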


This result applies directly to the case in which $\ell$ is either isometric to a circle or to $\R$, yielding the following.

\begin{proposition}
    If $\filo$ is isometric to a circle or a line, then the operator $-\Delta_\filo$ with domain $\mathcal{C}^\infty_0(\filo)$ is essentially self-adjoint on $L^2(\ell,\mu\,ds)$. 
\end{proposition}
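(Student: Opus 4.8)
The plan is to reduce the statement to the completeness criterion recorded in Theorem~\ref{riemannian}. The key observation is that the operator $-\Delta_\filo$ has exactly the form $\div_\mu\nabla_g$ to which Theorem~\ref{riemannian} applies, where $g$ is the locally Euclidean metric on the leaf $\filo$ and $\mu$ is the disintegrated measure $\mu_\ell = \nu(s)\,ds$. Indeed, by \eqref{lapl div grad} and \eqref{lapl char} we have $\Delta_\filo = \div_\mu\nabla_\ell$, so the desired essential self-adjointness on $\mathcal{C}^\infty_0(\filo)$ will follow once we verify the two hypotheses of Theorem~\ref{riemannian}: that the metric is complete, and that $\mu$ is positive and smooth.

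First I would address the smoothness and positivity of the measure. On $\filo\subset S\setminus C(S)$ the surface measure $\mu$ is non-vanishing and smooth, as recalled after \eqref{hor unit norm}, and its disintegration $\mu_\ell = \nu(s)\,ds$ along the leaf inherits this positivity and smoothness away from characteristic points (see Section~\ref{sec: disi}); since a circle or a line contains no characteristic points as finite endpoints, $\mu$ is positive and smooth on all of $\filo$. Second, I would verify completeness of the induced metric. By hypothesis $\filo$ is isometric to a circle or to $\R$, and both are complete as Riemannian manifolds: a circle is compact, hence complete, and $\R$ with its standard metric is the model complete one-dimensional manifold. Thus the metric on $\filo$ is complete.

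With both hypotheses in hand, Theorem~\ref{riemannian} applies and yields that $\Delta_\filo = \div_\mu\nabla_g$ is (essentially) self-adjoint, which is precisely the claim. The only subtlety worth flagging is the distinction between essential self-adjointness on the core $\mathcal{C}^\infty_0(\filo)$ and self-adjointness of the full operator: since $\mathcal{C}^\infty_0(\filo)$ is dense and forms an operator core in the complete case, the symmetric operator on this domain admits a unique self-adjoint extension, which is exactly essential self-adjointness.

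I do not expect a serious obstacle here, as this is the benign case of the classification: the whole point is that completeness of the leaf as a Riemannian manifold removes any boundary effect. The mild care needed is simply to confirm that the disintegrated measure $\mu_\ell$, rather than the ambient surface measure $\mu$, is the one with respect to which $\Delta_\filo$ is symmetric, and that replacing $\mu$ by $\mu_\ell$ leaves the divergence—and hence the operator and its self-adjointness—unchanged (as noted in Section~\ref{sec:intro-lapl-fol}, the disintegrated measure is determined only up to a multiplicative constant, which does not affect $\div_\mu$). This justifies invoking Theorem~\ref{riemannian} with $\mu = \mu_\ell$ on the genuinely complete one-dimensional manifold $\filo$.
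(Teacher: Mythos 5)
Your proposal is correct and follows exactly the paper's own route: the paper proves this proposition simply by noting that Theorem~\ref{riemannian} "applies directly" to a leaf isometric to a circle or to $\R$, which is precisely your reduction to completeness of the leaf plus positivity and smoothness of the (disintegrated) measure. Your additional remarks on the core $\mathcal{C}^\infty_0(\filo)$ and on the irrelevance of the multiplicative constant in $\mu_\ell$ just make explicit what the paper leaves implicit.
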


\begin{remark}
   The operator $\Delta_\ell$ is essentially self-adjoint even on an infinite leaf with non-empty limit set. For example, this can occur when the limit set is a homoclinic trajectory or a heteroclinic cycle, joining saddle points (see Figure~\ref{fig: eteroclina}). We stress that this is independent of whether the operator $\Delta_\ell$ is {essentially} self-adjoint on the leaves composing the limit set. 
\end{remark}
\begin{figure}
    \centering
    \includegraphics[width=0.37\linewidth]{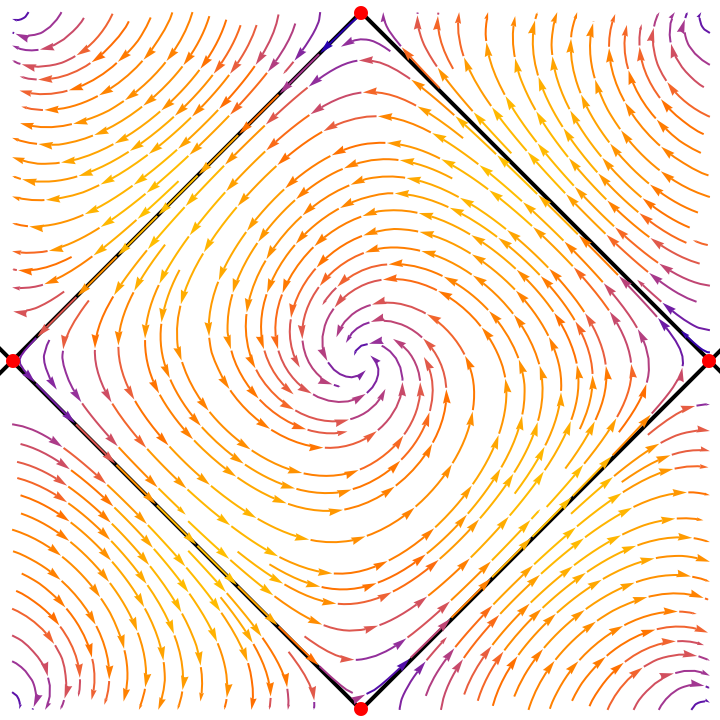}
    \caption{Heteroclinic cycle between 4 saddles. Even if the operator is not essentially self-adjoint on the black leaves composing the cycle, it is nevertheless essentially self-adjoint at infinity on every leaf in the interior of the cycle.}
    \label{fig: eteroclina}
\end{figure}

We now consider the case in which $\ell\sim (0,a_1)$, $0<a_1\le +\infty$. By assumption \textbf{(H2)}, the endpoints $0$ and $a_1$, when finite, correspond to characteristic points. For simplicity we will discuss the essential self-adjointness of the operator $L_\ell=-\partial_s^2+V(s)$ defined in $\mathcal{C}_0^\infty (\ell)$, which by Lemma~\ref{lem:potential} is equivalent to that of $\Delta_\ell$.

The theory of self-adjointness of second order Schr\"odinger operators on an interval was developed by Weyl (see \cite{reedFourier2007}). Actually the self-adjointness depends on the asymptotic behaviour of $V$ at $0$ and $a_1$.

\begin{definition}
    We say that the operator $L_\ell$ is in the limit circle case at $a_1$ (respectively at zero) if all solutions of $L_\ell \phi=0$ are square integrable at $a_1$ (respectively at zero). Otherwise, we say that $L_\ell$ is in the limit point case.
\end{definition}

The basic result of this theory is the following (\cite[Theorem X.7]{reedFourier2007}).
\begin{proposition}
   The operator $L_\ell$ is essentially self-adjoint if and only if it is in the limit point case at both endpoints of $\ell$. 
\end{proposition}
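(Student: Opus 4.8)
The plan is to translate essential self-adjointness into the vanishing of the deficiency indices of $L_\ell$ and to compute these by counting globally square-integrable solutions of $L_\ell\phi=i\phi$. Since $L_\ell=-\partial_s^2+V$ is symmetric on $\mathcal{C}^\infty_0(\ell)$, von Neumann's theory says it is essentially self-adjoint if and only if $\ker(L_\ell^\ast-i)=\ker(L_\ell^\ast+i)=\{0\}$. As $V$ is real, complex conjugation is an antiunitary involution commuting with $L_\ell^\ast$ and carrying $\ker(L_\ell^\ast-i)$ onto $\ker(L_\ell^\ast+i)$; the two deficiency indices coincide, so it suffices to analyse $n:=\dim\ker(L_\ell^\ast-i)$. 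By one-dimensional elliptic regularity, any $\phi\in L^2(\ell,ds)$ with $L_\ell^\ast\phi=i\phi$ distributionally is a classical solution of the ODE $-\phi''+V\phi=i\phi$ on $(0,a_1)$, and conversely every globally square-integrable solution lies in $\ker(L_\ell^\ast-i)$. The solution space $\mathcal S$ of this second-order linear equation is two-dimensional, so $n=\dim\big(\mathcal S\cap L^2(\ell,ds)\big)\in\{0,1,2\}$, and the problem reduces to deciding how many solutions are square-integrable at both endpoints at once.

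Next I would localise at the endpoints via the Weyl dichotomy recalled above, together with the basic Weyl lemma that for the non-real value $z=i$ at least one solution is square-integrable near each endpoint. Writing $\mathcal S^0$ (resp. $\mathcal S^{a_1}$) for the subspace of solutions square-integrable near $0$ (resp. near $a_1$), we have $\dim\mathcal S^c=2$ at a limit-circle endpoint and $\dim\mathcal S^c=1$ at a limit-point endpoint, and $n=\dim(\mathcal S^0\cap\mathcal S^{a_1})\ge \dim\mathcal S^0+\dim\mathcal S^{a_1}-2$. If at least one endpoint is limit circle, the right-hand side is at least $1$, so $n\ge 1$ and $L_\ell$ is not essentially self-adjoint; this settles the \textquotedblleft only if\textquotedblright\ direction. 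In the complementary case both endpoints are limit point, $\dim\mathcal S^0=\dim\mathcal S^{a_1}=1$, and the only obstruction to $n=0$ would be for these two lines in $\mathcal S$ to coincide, i.e.\ for a single nonzero solution $\phi$ to be globally square-integrable.

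To exclude this I would use the Lagrange (Green) identity. For such a $\phi$ one has $\phi''=(V-i)\phi$ and $\overline\phi{}''=(V+i)\overline\phi$, so the Wronskian $W(\phi,\overline\phi)=\phi\,\overline\phi{}'-\phi'\,\overline\phi$ satisfies $\tfrac{d}{ds}W(\phi,\overline\phi)=2i\,\lvert\phi\rvert^2$, and integration over $(0,a_1)$ gives
\[
\big[W(\phi,\overline\phi)\big]_0^{a_1}=2i\,\|\phi\|_{L^2}^2 .
\]
The decisive fact is the characteristic feature of the limit-point case: at such an endpoint $c$ the boundary form $\lim_{s\to c}W(f,\overline g)(s)$ vanishes for every pair $f,g$ in the maximal domain (here $\phi$ and $\overline\phi$, square-integrable solutions of the two conjugate equations, both lie there). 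Granting this, both boundary terms vanish, forcing $\|\phi\|_{L^2}^2=0$, hence $\phi\equiv0$ and $n=0$. Combining the two directions yields $n=0$ precisely when both endpoints are limit point, which is the claim.

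The main obstacle is exactly this last input, the vanishing of the boundary form at a limit-point endpoint, which is the analytic heart of Weyl's theory; everything else (the von Neumann reduction, the regularity identification, and the dimension count) is routine once it is available. I would establish it by the standard argument: in the limit-point case there is a solution $u$ of $L_\ell u=i u$ that is \emph{not} square-integrable near $c$, and if the boundary limit of some $W(\phi,\overline\phi)$ were nonzero one could combine $\phi$ with $u$ to produce a second, linearly independent, square-integrable solution near $c$, contradicting that the local $L^2$-solution space is one-dimensional.
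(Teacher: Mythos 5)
The paper offers no proof of this proposition: it is quoted directly from Weyl theory with a citation to Reed--Simon, Theorem~X.7. Your proposal is therefore measured against the classical textbook argument, and its architecture (von Neumann reduction, equal deficiency indices via conjugation, identification of $\ker(L_\ell^\ast-i)$ with globally square-integrable solutions of the ODE, Green's identity at the endpoints) is exactly that argument's skeleton. But two load-bearing steps are not secured. The first is the pair of dimension counts $\dim\mathcal{S}^c=2$ at a limit-circle endpoint and $\dim\mathcal{S}^c=1$ at a limit-point endpoint: these concern solutions of $L_\ell\phi=i\phi$, whereas the paper defines the dichotomy through solutions of $L_\ell\phi=0$. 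Bridging the two spectral parameters is Weyl's invariance theorem (Reed--Simon, Theorem~X.8), a genuine result proved by variation of parameters and a Gronwall estimate, which you use silently in both directions of your argument. (The \textquotedblleft only if\textquotedblright\ direction could in fact bypass it: take two real, linearly independent solutions of $L_\ell\phi=0$, cut them off near the limit-circle endpoint; they lie in $D_{\max}$, their boundary form there equals their Wronskian, a nonzero constant, contradicting the symmetry of $L_\ell^\ast$ that essential self-adjointness would force. As written, however, your count needs invariance.)

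The second gap is the serious one. You correctly isolate the decisive lemma --- at a limit-point endpoint $c$ the boundary form $\lim_{s\to c}W(f,\overline{g})(s)$ vanishes for $f,g$ in the maximal domain --- and the lemma is true, but your proposed proof of it cannot work. You claim that a nonzero limit of $W(\phi,\overline{\phi})$ would let one \textquotedblleft combine $\phi$ with $u$\textquotedblright\ (the non-square-integrable solution) to produce a second $L^2$ solution near $c$. Any combination $a\phi+bu$ with $b\neq 0$ solves the same equation $L_\ell v=iv$ and is automatically \emph{not} square-integrable near $c$, precisely because $\phi$ is and $u$ is not; so no such solution can ever be produced, no contradiction with one-dimensionality arises, and the sketch is vacuous. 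The standard proofs of this lemma require genuinely more: either the Weyl-disk geometry (the global $L^2$ solution corresponds to the point $m_\infty$ contained in all the nested disks, whose radii shrink to zero in the limit-point case, and this forces $\lim_{s\to c}W(\phi,\overline{\phi})(s)=0$), or the von Neumann formula $\dim\bigl(D_{\max}/D_{\min}\bigr)=n_++n_-$ combined with a localization at a regular point. Since this lemma is the analytic core of the very theorem being proved, the proposal, as it stands, leaves unestablished exactly the content of the statement.
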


Inspired by the above result, we give the following definition.
\begin{nomenclature}
     With a slight abuse of language, 
     we say that the operator $L_\ell$ is essentially self-adjoint at an endpoint of $\ell$ if the operator is in the limit point case at such endpoint.
\end{nomenclature}

Using again Theorem \ref{riemannian} and the unitarily equivalence of $L_\ell$ and $\Delta_\ell$, one readily obtains the following. 

\begin{proposition}
\label{prop:sa-infinity}
 If $a_1=\infty$, then the operator $L_\ell$ is essentially self-adjoint at $a_1$.  
\end{proposition}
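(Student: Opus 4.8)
\section*{Proof proposal}

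The plan is to decouple the endpoint at infinity from the (possibly singular) characteristic endpoint at $0$, and to reduce the question to the completeness result of Theorem~\ref{riemannian}. The key observation is that the limit-point/limit-circle alternative at $a_1$ is a \emph{local} property: by Weyl's theory, whether $L_\ell$ is in the limit point case at $a_1$ is decided by the number of solutions of $L_\ell\phi=\lambda\phi$ (for any fixed $\lambda\in\mathbb{C}$, say $\lambda=i$) that are square-integrable on a neighbourhood $(c,a_1)$ of the endpoint, and this number depends only on $V$ restricted to such a neighbourhood (see \cite{reedFourier2007}). I stress that I make no attempt to control $V$ explicitly near $a_1=\infty$ — it may well be wild — but rely instead on the geometric completeness of the end $s\to\infty$, where the leaf carries the standard metric $ds^2$.

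Since Theorem~\ref{riemannian} requires a globally complete manifold, whereas $\ell\sim(0,\infty)$ is incomplete at $0$, I would first glue away the bad endpoint. Fix $c>0$ and construct an auxiliary one-dimensional Riemannian manifold $\wt\ell\cong\mathbb{R}$, equipped with the standard metric $ds^2$ and a smooth, strictly positive density $\wt\mu_\ell$ chosen so that $\wt\mu_\ell=\mu_\ell$ on $(c,\infty)$. This is always possible, as the smooth positive density $\mu_\ell$ on $(c,\infty)$ extends to a smooth positive density on all of $\mathbb{R}$. Since $(\mathbb{R},ds^2)$ is complete, Theorem~\ref{riemannian} applies, so $\Delta_{\wt\ell}=\div_{\wt\mu_\ell}\nabla$ is essentially self-adjoint on $\mathcal{C}^\infty_0(\mathbb{R})$.

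Next I would transfer this to a Schrödinger operator. Applying the unitary map $\wt T u=\sqrt{\wt\mu_\ell}\,u$ of Lemma~\ref{lem:potential} (which preserves $\mathcal{C}^\infty_0(\mathbb{R})$, since $\sqrt{\wt\mu_\ell}$ is smooth and positive), the operator $\Delta_{\wt\ell}$ is unitarily equivalent to $\wt L=-\partial_s^2+\wt V$ on $L^2(\mathbb{R},ds)$, where $\wt V=\tfrac12\partial_s\wt b+\tfrac14\wt b^2$ with $\wt b=\partial_s(\log\wt\mu_\ell)$. Because $\wt\mu_\ell$ coincides with $\mu_\ell$ on $(c,\infty)$, there $\wt b=b$ and hence $\wt V=V$. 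Essential self-adjointness of $\wt L$ on $\mathcal{C}^\infty_0(\mathbb{R})$ is equivalent, by \cite[Theorem~X.7]{reedFourier2007}, to $\wt L$ being in the limit point case at both $-\infty$ and $+\infty$; in particular $\wt L$ is limit point at $+\infty$.

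Finally I would invoke locality to conclude. The equations $L_\ell\phi=\lambda\phi$ and $\wt L\phi=\lambda\phi$ coincide on $(c,\infty)$, so they have the same solutions there and, a fortiori, the same number of solutions square-integrable near $+\infty$. Hence $L_\ell$ is in the limit point case at $a_1=\infty$ if and only if $\wt L$ is, and the latter holds. By the unitary equivalence of $L_\ell$ and $\Delta_\ell$ (Lemma~\ref{lem:potential}), this is exactly the assertion that $L_\ell$ (equivalently $\Delta_\ell$) is essentially self-adjoint at $a_1$. The only point requiring genuine care is this gluing-plus-locality step that separates the analysis at $\infty$ from the singular endpoint at $0$; the remainder is a direct application of the cited results.
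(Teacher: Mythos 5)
Your proposal is correct and is essentially the paper's own argument: the paper proves this proposition in one line, ``Using again Theorem~\ref{riemannian} and the unitarily equivalence of $L_\ell$ and $\Delta_\ell$, one readily obtains the following,'' which is exactly your combination of the completeness theorem with Lemma~\ref{lem:potential}. The gluing-to-$\mathbb{R}$ construction and the Weyl locality argument that you spell out are precisely the details the paper leaves implicit behind ``readily obtains,'' so your write-up is a faithful (and more careful) version of the same proof.
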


It remains to study the essential self-adjointness at $0$. The case for finite $a_1$ can be traced back to this one.

We characterize the endpoint-wise essential self-adjointess of $L_\filo$ in the following.

\begin{lemma}
\label{lem:ess self adj}
    Let the non-degenerate characteristic point $p$ be an endpoint of the leaf $\filo$.
    Then, the operator $L_\filo$ is essentially self-adjoint at $p$ if and only if all eigenvalues of $\Hess u(p)\circ J$ are real and $\filo$ enters $p$ tangentially to the $\lambda$-eigenspace, where $\lambda\in [-1,1/3]$.
\end{lemma}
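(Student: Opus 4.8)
The plan is to use the reduction already available. By Lemma~\ref{lem:potential}, the essential self-adjointness of $-\Delta_\ell$ at $p$ is equivalent to the limit-point nature at $0$ of $L_\ell=-\partial_s^2+V$, and by Proposition~\ref{prop:b-asymp} the potential $V$ has an explicit singular profile at the characteristic endpoint. I would therefore argue by cases, matching the two alternatives of Proposition~\ref{prop:b-asymp}, and determine in each the precise range of parameters for which the limit-point case occurs.

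First, suppose $p$ is a focus or a star node, so that the eigenvalues of $\Hess u(p)\circ J$ are either non-real (focus) or both equal to $1/2$ (star node); in either situation the condition \enquote{real eigenvalues with tangency $\lambda\in[-1,1/3]$} fails, and I must show $L_\ell$ is in the limit circle case at $0$. By Proposition~\ref{prop:b-asymp} one has $V(s)=\zeta(s)/s$ with $\zeta$ smooth up to $0$, so $s=0$ is a regular singular point of $L_\ell\phi=0$ with indicial exponents $0$ and $1$. Both fundamental solutions are then bounded near $0$ (up to a possible logarithmic factor), hence square integrable there, which gives the limit circle case and thus non essential self-adjointness.

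Second, suppose $p$ is a proper node or a saddle, so that the eigenvalues are real and distinct. If $\ell$ enters $p$ tangentially to the $\lambda$-eigenspace, Proposition~\ref{prop:b-asymp} yields
\[
V(s)=\frac{c}{s^2}+\frac{\zeta(s)}{s},\qquad c=\frac{1-2\lambda}{4\lambda^2},
\]
with $\zeta$ smooth up to $0$. The key point is that the subleading term $\zeta(s)/s$ leaves the limit-point/limit-circle classification unchanged, so it coincides with the one for the pure inverse-square potential $c/s^2$, which is in the limit point case if and only if $c\ge 3/4$ (\cite[Theorem~X.10]{reedFourier2007}, as recalled in the introduction). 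Solving $\tfrac{1-2\lambda}{4\lambda^2}\ge\tfrac34$, i.e.\ $3\lambda^2+2\lambda-1=(3\lambda-1)(\lambda+1)\le0$, gives exactly $\lambda\in[-1,1/3]$, which is the asserted characterization. Here $\lambda\neq0$, since $\lambda=0$ would force the excluded degenerate value $\widehat K_p=-1$; the star node value $\lambda=1/2$, for which $c=0<3/4$, is consistent with the limit circle conclusion of the first case.

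The main obstacle is the stability of the classification under the perturbation $\zeta(s)/s$, which need not be integrable at $0$. I would settle it by observing that $\int_0^1 s\,\bigl|\zeta(s)/s\bigr|\,ds=\int_0^1|\zeta(s)|\,ds<\infty$, since $\zeta$ is continuous up to $0$; this is the standard sufficient condition ensuring that the solutions of $-\phi''+(c/s^2+\zeta/s)\phi=0$ retain the leading behaviour $s^{\alpha_\pm}$, with $\alpha_\pm=\tfrac12\bigl(1\pm\sqrt{1+4c}\bigr)$, of the unperturbed Euler equation. Since square integrability at $0$ is governed solely by whether the smaller exponent $\alpha_-$ exceeds $-1/2$, equivalently $c<3/4$, the classification is indeed unchanged. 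Alternatively, a direct Frobenius and variation-of-constants analysis of the perturbed equation yields the same asymptotics and hence the same conclusion.
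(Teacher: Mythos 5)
Your proposal is correct, and it reaches the conclusion by a genuinely different mechanism than the paper. Both arguments share the same skeleton: reduce to $L_\ell=-\partial_s^2+c/s^2+\zeta(s)/s$ via Lemma~\ref{lem:potential} and Proposition~\ref{prop:b-asymp}, and finish with the algebra $\frac{1-2\lambda}{4\lambda^2}\ge\frac34\iff(3\lambda-1)(\lambda+1)\le0\iff\lambda\in[-1,1/3]$. The difference is how the perturbation $\zeta(s)/s$ is absorbed. The paper never computes solution asymptotics: for $c\ge 3/4$ it transfers essential self-adjointness from the pure operator $A=-\partial_s^2+c\,s^{-2}$ to $L_\ell$ by a KLMN form-bound argument, and for $c<3/4$ it deduces the limit circle case from the pointwise comparison of $c/s^2+\zeta(s)/s$ with $\tfrac34 s^{-2}$ near $0$ together with \cite[Theorem~X.10]{reedFourier2007}. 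You instead run Weyl's alternative directly on the equation $L_\ell\phi=0$, showing that under $\int_0^1 s\,\abs{\zeta(s)/s}\,ds<\infty$ the Frobenius/Euler exponents $\alpha_\pm=\tfrac12\bigl(1\pm\sqrt{1+4c}\bigr)$ survive the perturbation, and reading off square integrability from $\alpha_->-1/2\iff c<3/4$; your Frobenius treatment of the focus/star-node case (indicial roots $0$ and $1$, bounded solutions) is likewise sound, and your observations that $\lambda\neq 0$ and, implicitly, that $c+\tfrac14=\frac{(1-\lambda)^2}{4\lambda^2}>0$ (so $\alpha_\pm$ are real and distinct) are exactly the places where non-degeneracy $\widehat K_p\neq-1$ is used. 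Each approach buys something. Yours is self-contained at the ODE level, treats both directions of the alternative uniformly, and sidesteps the form-perturbation step (where the relative bound must be kept below $1$; the paper's own write-up, which allows $a>1$ and concludes with $\max\{a,k\}$, is delicate on this point); moreover, the asymptotic solutions you construct are precisely the boundary functions $\phi_1,\phi_2$ that the paper must derive separately later (Lemmas~\ref{dmaxx ns} and~\ref{dmax focus}) to describe $D_{\max}(L_\ell)$, so your computation would serve double duty. The paper's route is softer: it invokes only off-the-shelf comparison and perturbation theorems and requires no Gronwall/variation-of-constants estimate. The one step you should make fully explicit if you write this up is the stability claim itself: what the dichotomy needs is that two independent solutions behave like $s^{\alpha_-}$ and $s^{\alpha_+}$ respectively (hence every solution is $O(s^{\alpha_-})$), and this does follow, as you indicate, from a fixed-point/variation-of-constants argument whose kernel is controlled exactly by $\int_0 t\,\abs{W(t)}\,dt<\infty$ with $W=\zeta/s$.
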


\begin{proof}
    Recalling the expression of $L_\ell$, introduced in Lemma \ref{lem:potential} and the estimates of Proposition \ref{prop:b-asymp}, we get:
    \[
    L_\ell=-\partial^2_s+\frac{c}{s^2}+\frac{\zeta(s)}{s}.
    \]
    Here, $c=0$ in the focus or star node case, while $c=(1-2\lambda)/({4\lambda^2})$ in the saddle or proper node case.
    
    Assume now that $c\ge{3}/{4}$, i.e., $\lambda\in[-1,1/3]$. We know that in this case the operator
    \[
    A=-\partial^2_s+\frac{c}{s^2}
    \]
    is essentially self-adjoint (see \cite[Theorem~X.10]{reedFourier2007}). 
    We apply KLMN Theorem (see \cite[Theorem~X.17]{reedFourier2007}) to show that so is $L_\ell$. To this purpose, let $\beta$ be the following quadratic form
    \[
    \beta(u,u)=\int_0^1\frac{\zeta(s)}{s}\abs{u}^2\,ds.
    \]
    We need to find some $a>1$ and $b\in\R$ such that the following estimate for $\beta$ holds:
    \begin{equation}
    \label{klmn}
    \begin{split}
           \abs{\beta(u,u)}&\le a(u,Au)+b(u,u)\\
           &=a\left[\int_0^1-u\,\partial^2_suds+\int_0^1\frac{c}{s^2}u^2\,ds\right]+b\int_0^1u^2ds.
    \end{split}
    \end{equation}
    To prove this we split the integral of $\beta$ into two parts:
    \[
    \abs{\int_0^1\frac{\zeta(s)}{s}\abs{u}^2ds}\le\abs{\int_0^\epsilon\frac{\zeta(s)}{s}\abs{u}^2ds}+\abs{\int_\epsilon^1\frac{\zeta(s)}{s}\abs{u}^2ds}.
    \]
    To bound the integral computed close to zero we use the fact that, being $\zeta(s)$ continuous, for any $a>1$ we can find $\epsilon>0$ small enough such that
    \[
    \abs{\frac{\zeta(s)}{s}}\le a\frac{c}{s^2}, \qquad \text{for } s\in (0,\epsilon].
    \]
    In particular,
    \[
   \int_0^\epsilon \abs{\frac{\zeta(s)}{s}}\abs{u}^2\,ds\le a\int_0^1\frac{c}{s^2}\abs{u}^2\,ds.
    \]
    Regarding the second part of the integral of $\beta$, using that $\frac{\zeta(s)}{s}$ is continuous in $[0,1]$, integration by part and Poincarè inequality, we get a constant $k$ such that
    \[
    \abs{\int_\epsilon^1\frac{\zeta(s)}{s}\abs{u}^2ds}\le -k\int_0^1u\partial_s^2u\,ds.
    \]

To sum up, for any $a>1$ there exists $k\ge 0$, such that
\[
\abs{\beta(u,u)}\le a\int_0^1\frac{c}{s^2}\abs{u}^2ds-k\int_0^1 u\partial_s^2u ds.
\]
This implies,
\[
\abs{\beta(u,u)}\le \max\{a,k\}\left[\int_0^1\frac{c}{s^2}\abs{u}^2ds-\int_0^1 u\partial_s^2u ds\right].
\]
Hence \eqref{klmn} holds. In particular, we can apply KLMN Theorem and conclude that $L_\ell$ is essentially self-adjoint.

    To conclude the proof of the statement, we now assume that $c<\frac{3}{4}$. In this case $A$ is not essentially self-adjoint. 
    To check the essential self-adjointness of $L_\ell$ in $0$, we check it on an interval like $(0,\delta)$ for some $\delta$.
    We can choose $\delta$ in such a way that, on $(0,\delta)$ the following inequality holds:
    \[
    \frac{c}{s^2}+\frac{\zeta(s)}{s}<\frac{3}{4}\frac{1}{s^2}.
    \]
    Applying \cite[Theorem~X.10]{reedFourier2007}, we conclude that if $A$ is not essentially self-adjoint, then not even $L_\ell$ is {essentially} self-adjoint. 
\end{proof}

Combining Lemma~\ref{lem:ess self adj} with Proposition~\ref{prop:cannarsa-theorem}, and recalling that the essential self-adjointness of $\Delta_\ell$ and $L_\ell$ at a point is equivalent, we complete the proof of Theorem~\ref{thm:K}.

\section{The foliated Laplacian on $S$}
\label{sec: sa ext}

In this last section we deal with the problem of the self-adjoint extensions of the operator $\bDelta$ introduced  in Section~\ref{sec:intro-lapl-fol}.

Let us discuss the choice of the operator $\bDelta$ given in \eqref{direct integrals}. A key aspect of defining an operator on the whole foliation consists in singling out the appropriate Hilbert space on which it should act. A first attempt could be to define $\bDelta$ on the direct sum of the $L^2(\ell,d\mu_\ell)$ spaces associated with each leaf in the foliation. However, even though this method does produce a well-defined operator acting on functions that are non-zero on at most countably many leaves, it lacks a clear geometric interpretation.
This justifies the introduction of a measure $\nu$ on the leaves of $\fol$ and the choice of $\int_\fol^\oplus L^2(\ell,d\mu_\ell)\,d\nu(\ell)$ as an Hilbert space in \eqref{direct integrals} (see \cite{vonNeumann1939}).

As already pointed out, among the possible self-adjoint extensions extensions of $\bDelta$ with domain $\int_\fol^\oplus \mathcal{C}^\infty_0(\ell)\,d\nu(\ell)$ one can distinguish between those that yield a mixed dynamics, where interaction between leaves occurs, and those that result in a disjoint dynamics, where each leaf evolves independently. 
Here, we limit ourselves to self-adjoint extensions of the operator $\bDelta$ that give rise to disjoint dynamics, i.e., dynamics in which there is no communication between different leaves.
These extensions are constructed by taking the direct integral of self-adjoint extensions on individual leaves. This approach naturally ensures that the resulting global dynamics is a collection of independent dynamics, each confined to a single leaf. 
Hence, we start by classifying self-adjoint extensions of $\Delta_\ell$ on a single leaf $\ell$.

We need the following.

\begin{definition}
    \label{def:lambda0}
    Let $p\in C(S)$ be an endpoint of a leaf $\ell\in\fol$, and let $\lambda_{1,2}$ be the eigenvalues of $\operatorname{Hess}u(p)\circ J$ normalized such that $\lambda_1+\lambda_2=1$.
    Then, define $\lambda_0(\ell,p)\in\mathbb{R}$ as follows
    \begin{itemize}
        \item If $p$ is a proper node or a saddle, then $\lambda_0(\ell,p) = \lambda_i$ where $i\in\{1,2\}$ is  such that $\ell$ enters $p$ tangentially to the $\lambda_i$-eigenspace.
        \item If $p$ is a focus or a star node, then $\lambda_0(\ell,p)=\operatorname{Re}\lambda_1=\operatorname{Re}\lambda_2=1/2$.
    \end{itemize}
    In the following, when no ambiguity arises, we refer simply to $\lambda_0$, dropping the dependence on $\ell$ and $p$.
\end{definition}

Let $\ell$ be a leaf isometric to $(0,a_1)$, with $a_1\le \infty$, and let $s$ be the arc-length parameter\footnote{
We already saw that, on unbounded leaves, the operators $L_\ell$ are essentially self-adjoint at infinity. For this reason we only need to consider self-adjoint extensions on leaves that contain at least one characteristic point in the limit circle case.
}. 
To classify the self-adjoint extensions of $\Delta_\ell$ on $L^2(\ell,d\mu_\ell)$, it is more convenient to look at the unitarily equivalent operator $L_\ell$ on $L^2(\ell,ds)$.
By Proposition \ref{prop:b-asymp}, if $a_1=\infty$, then the operator $L_\ell$ has the following form:
\[
L_\ell=-\partial_s^2+\frac{\beta}{s^2}+\frac{\zeta(s)}{s}, 
\qquad\text{where }
\beta = \frac{1-2\lambda_0}{4\lambda_0^2}.
\]
Similarly, if $a_1<\infty$, then there exist two functions $\zeta_0$ and $\zeta_1$, smooth on $[0,a_1]$, and two constants $\beta_0,\beta_1\in \R$ such that
\begin{equation}
    \label{eq: 0,1}
L_\ell=-\partial_s^2+\frac{\beta_0}{s^2}+\frac{\zeta_0(s)}{s}+\frac{\beta_1}{(s-a_1)^2}+\frac{\zeta_1(s)}{s-a_1}.
\end{equation}
In the following we focus on the case where $L_\ell$ is not essentially self-adjoint in $0$.

In order to study self-adjoint extensions of $L_\ell$, we use Sturm-Liouville's theory for self-adjoint extensions (see, e.g., \cite{zettlSturmLiouville2012}).
According to this theory, one characterizes the domains of self-adjointness of the operator $L_\ell$ as appropriate restrictions of its maximal domain, i.e., the domain of the adjoint of $L_\ell$, characterized as 
\[
D_{\max}(L_\ell)=\left\{u\in L^2(\ell,ds): L_\ell u\in L^2(\ell,ds)\, \right\}.
\]
We also let $D_{\min}(L_\ell)\subset D_{\max}(L_\ell)$ to be the domain of the closure of $L_\ell$. 

We start by characterizing the maximal domain $D_{\max}(L_\ell)$.

\begin{lemma}
\label{dmaxx ns}
    Let $\ell\sim(0,a_1)$ be a leaf of the foliation $\fol$ whose finite endpoints correspond to a characteristic point of proper node or saddle type. Assume that $\ell$ is not essentially self-adjoint at $0$ and fix $\delta>0$. 
    Then, 
    \begin{itemize}
        \item 
    if $L_\ell$ is essentially self-adjoint at $a_1$, then we have
    \[
 D_{\max}(L_\ell)=D_{\min}(L_\ell)+\operatorname{span}\{\phi_1,\phi_2\}
    \]
    where $\phi_1,\phi_2$ are supported in $(0,\delta)$ and, letting $\mu=\sqrt{\frac{1}{4}+\beta}$ and $\gamma=\zeta_0(0)$, we have for $s\in (0,\frac{\delta}{2})$
    \begin{equation}
        \label{phi zero}\phi_1(s)=s^{\frac{1}{2}-\mu}+\frac{\gamma}{1-2\mu}s^{\frac{3}{2}-\mu},
 \quad \text{and}\quad
 \phi_2(s)=s^{\frac{1}{2}+\mu}.
    \end{equation} 

 \item  if $L_\ell$ is not essentially self-adjoint at both $0$ and $a_1$, then we have that $a_1<\infty$. In this case, if $\delta<a_1$ we have
 \[
D_{\max}(L_\ell)=D_{\min}(L_\ell)+\operatorname{span}\{\phi^0_1,\phi^0_2,\phi^1_1,\phi^1_2\}.
    \]
Here, $\phi_i^0(s)=\phi_i(s)$ with $\beta=\beta_0$ and $\gamma=\zeta_0(0)$, and $\phi_i^1(s)=\phi_i(a_1-s)$ with $\beta=\beta_1$ and $\gamma=\zeta_1(1)$. 
     \end{itemize}   
\end{lemma}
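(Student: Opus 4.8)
The plan is to characterize $D_{\max}(L_\ell)$ by exploiting the fact that near a characteristic point the operator $L_\ell$ is a perturbation of the pure inverse-square operator $-\partial_s^2 + \beta/s^2$. The key structural fact is that $D_{\max}/D_{\min}$ is finite-dimensional, with dimension equal to twice the number of limit-circle endpoints (this is standard Sturm-Liouville / von Neumann deficiency index theory, and follows from the Weyl criterion already invoked via \cite[Theorem X.7]{reedFourier2007}). So the whole task reduces to exhibiting, at each limit-circle endpoint, two linearly independent functions in $D_{\max}$ that are \emph{not} in $D_{\min}$, and whose span (modulo $D_{\min}$) gives the quotient. Since the endpoint behavior decouples, I would treat the endpoint $0$ first and then transfer the result to $a_1$ by the reflection $s\mapsto a_1-s$.

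\medskip

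\textbf{Step 1: local analysis at $0$.} By Proposition~\ref{prop:b-asymp}, near $s=0$ we have $L_\ell = -\partial_s^2 + \beta_0/s^2 + \zeta_0(s)/s$ with $\zeta_0$ smooth up to $0$. The homogeneous equation $-\phi'' + (\beta/s^2)\phi = 0$ has the two explicit solutions $s^{1/2\pm\mu}$ with $\mu=\sqrt{1/4+\beta}$ (this is exactly the indicial/Frobenius exponent calculation for a regular singular point). The limit-circle condition at $0$ — which we are assuming, i.e. failure of essential self-adjointness — is precisely $\mu < 1$, equivalently $\beta < 3/4$, so that \emph{both} solutions are square-integrable near $0$. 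I would then build $\phi_1,\phi_2$ as Frobenius-type approximate solutions: since the lower-order term $\zeta_0(s)/s$ is a genuine perturbation, one must correct $s^{1/2-\mu}$ by the next term in its series expansion to stay in $D_{\max}$, which is why $\phi_1$ carries the extra summand $\frac{\gamma}{1-2\mu}s^{3/2-\mu}$ (here $\gamma=\zeta_0(0)$); the coefficient is fixed by matching the $s^{-1/2-\mu}$ term in $L_\ell\phi_1$. The function $\phi_2 = s^{1/2+\mu}$ needs no correction because $L_\ell\phi_2$ is already square-integrable without it (the perturbation only worsens the already-subcritical decay). One multiplies both by a cutoff supported in $(0,\delta)$, smooth and equal to the stated formulas on $(0,\delta/2)$; the cutoff region contributes only smooth compactly-supported pieces, hence lies in $D_{\min}$.

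\medskip

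\textbf{Step 2: verify membership and independence.} I would check directly that $\phi_1,\phi_2\in D_{\max}$, i.e. that $\phi_i\in L^2$ near $0$ (immediate from $\mu<1$) and that $L_\ell\phi_i\in L^2$ near $0$ (this is the point of the correction term: the leading singular contributions cancel and the remainder is in $L^2$). Linear independence modulo $D_{\min}$ is most cleanly seen through the boundary form / Lagrange bracket $[\,u,v\,](s) = u\bar v' - u'\bar v$ evaluated at $0$: $\phi_1$ and $\phi_2$ have distinct limiting behaviors there ($[\phi_1,\phi_2](0)\neq 0$), so neither is in $D_{\min}$ and they are independent in the quotient. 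Counting dimensions via the deficiency-index theorem then forces the span to exhaust $D_{\max}/D_{\min}$ in the one-sided case.

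\medskip

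\textbf{Step 3: the two-sided case and assembling the result.} When $L_\ell$ is also limit-circle at $a_1$, then $a_1<\infty$ (since by Proposition~\ref{prop:sa-infinity} an infinite endpoint is automatically limit-point), and the local analysis at $a_1$ is identical after the reflection $s\mapsto a_1-s$, producing $\phi_1^1,\phi_2^1$ from \eqref{eq: 0,1} with $\beta_1,\zeta_1(a_1)$ in place of $\beta_0,\zeta_0(0)$. Because the two endpoints are localized in disjoint neighborhoods (via the cutoffs, taking $\delta<a_1$), the four functions are independent modulo $D_{\min}$ and their span is the full four-dimensional quotient, again by dimension count. \textbf{The main obstacle} I anticipate is Step 1: getting the correction term in $\phi_1$ exactly right and rigorously proving $L_\ell\phi_1\in L^2$ near $0$, i.e. that after subtracting the $\beta/s^2$ and the $\gamma/s$ contributions the residual is square-integrable. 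This requires care with the borderline cases (small $\mu$, where $s^{3/2-\mu}$ interacts with $s^{1/2+\mu}$) and with confirming that $\zeta_0$ being merely continuous — rather than needing further smoothness — suffices; the index count and the boundary-form computations are by contrast routine once the approximate solutions are in hand.
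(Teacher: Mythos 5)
Your proposal is correct and takes essentially the same approach as the paper: exhibit the cut-off quasi-solutions $\phi_1,\phi_2$ (with the correction term in $\phi_1$ chosen exactly to cancel the non-square-integrable $s^{-\frac12-\mu}$ contribution), check they belong to the maximal domain, and invoke the standard Sturm--Liouville/deficiency-index fact that $D_{\max}(L_\ell)/D_{\min}(L_\ell)$ has dimension two per limit-circle endpoint, treating $a_1$ by reflection. The only organizational difference is that the paper routes the membership check through the model operator $P_{\beta,\gamma}=-\partial_s^2+\beta s^{-2}+\gamma s^{-1}$, first showing that cutoffs of functions in $D_{\max}(P_{\beta,\gamma})$ lie in $D_{\max}(L_\ell)$ via the bound $\abs{\zeta_0(s)-\gamma}\le cs$, whereas you compute $L_\ell\phi_i$ directly; note also that your worry about $\zeta_0$ being merely continuous does not arise, since Proposition~\ref{prop:b-asymp} provides smoothness of $\zeta_0$ up to $s=0$, which is precisely what the paper's estimate uses.
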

\begin{proof}
Observe that if $L_\ell$ is not essentially self-adjoint at $a_1$,{ then from Proposition~\ref{prop:sa-infinity}  it holds $a_1<+\infty$. }
To explicitly describe the maximal domain $D_{\max}(L_\ell)$, one must identify two functions supported near each limit circle endpoint, belonging to the maximal domain, and independent, modulo functions in $D_{\min}(L_\ell)$.
We provide an argument for the endpoint $0$ in the case where $a_1<+\infty$, the other cases being similar. 

We introduce the operator $P_{\beta,\gamma}$ on $(0,\infty)$
\[
P_{\beta,\gamma}=-\partial_s^2+\frac{\beta}{s^2}+\frac{\gamma}{s}.
\]
Let $u\in D_{\max}( P_{\beta,\gamma})$.
Consider a smooth cut-off function $\chi:[0,\infty)\to [0,1]$ which is $1$ on $[0,\frac{\delta}{2}]$ and zero on $[\delta,+\infty)$.
We start by claiming that if $\beta=\beta_0$ and $\gamma=\zeta_0(0)$, then the function $\chi\,u$, possibly restricted to $(0,a_1)$, is in the maximal domain of $L_\ell$. Analogously, if $\beta=\beta_1$ and $\gamma=\zeta_1(1)$, then the function $s\mapsto \chi(a_1-s) u(a_1-s)$ is in $D_{\max}(L_\ell)$.

Direct computations yield
\begin{equation}
\label{max uguale}
    \begin{split}
        L_\ell \,(\chi u)
        &= -\chi(L_\ell \, u)+2\chi'u'-\chi'' \,u\\
        &= \chi P_{\beta,\gamma} u+ \left(\frac{\zeta_0(s)-\gamma}{s}+\frac{\beta_1}{(a_1-s)^2}+\frac{\zeta_1(s)}{a_1-s}\right)\chi u+2\chi'u'-\chi'' \,u. 
    \end{split}
\end{equation}
Observe that $u\in H^1_{\text{loc}}(0,+\infty)$ and that $\abs{\zeta_0(s)-\gamma}\le c s$ for some $c\ge0$. Hence, by definition of $u$ and of the cut-off function $\chi$, we get that $\chi \,u \in D_{max}(L_\ell)$. 
The second part of the claim follows similarly.

To complete the proof of the statement, notice that $\mu\in(0,1)$ immediately implies that the functions defined in \eqref{phi zero} are in $L^2(\ell,ds)$. It remains to show that $P_{\beta,\gamma}(\phi_i)\in L^2(0,\delta)$. This follows directly by computing
\[P_{\beta,\gamma}(s^{\frac{1}{2}\pm \mu})=\gamma s^{-\frac{1}{2}\pm \mu},\quad\text{and}\quad P_{\beta,\gamma}(s^{\frac{3}{2}- \mu})=(2\mu-1)
s^{-\frac{1}{2}- \mu} +\gamma s^{\frac{1}{2}- \mu},
\]
so that in particular the functions defined in \eqref{phi zero} are in the maximal domain of $P_{\beta,\gamma}$ with the correct choices of $\beta$ and $\gamma$.
\end{proof}

It remains the case in which $0$ (respectively $a_1$) is a characteristic point of focus type or star-node type. We now report the equivalent lemma in the case of an half-line leaf $\ell\sim (0,+\infty)$ with a characteristic point of focus type at zero. The finite case can be done as before.

\begin{lemma}
\label{dmax focus}
    Let $\ell\sim (0,+\infty)$ be a leaf with 0 corresponding to a characteristic point of focus or star-node type. Let $\delta>0$. Then
    \[
D_{\max}=D_{\min}+\operatorname{span}\{\phi_1,\phi_2\},
    \]
    where $\phi_1,\phi_2$ are supported in $(0,\delta)$ and, letting $\gamma=\zeta(0)$, for $s\in \left(0,\frac\delta2\right)$ it holds 
\[
\phi_1(s)=s \quad\text{and    }\quad\phi_2(s)=1-\gamma s \ln{s}.
\]
\end{lemma}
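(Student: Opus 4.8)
The plan is to mirror the argument of Lemma~\ref{dmaxx ns}, adapting it to the resonant situation occurring at a focus or star node. The starting observation is that, by Proposition~\ref{prop:b-asymp}, in this case the coefficient of the inverse-square singularity vanishes, so near $s=0$ the operator $L_\ell=-\partial_s^2+\zeta(s)/s$ is modelled by the Coulomb-type operator $P_\gamma=-\partial_s^2+\gamma/s$ with $\gamma=\zeta(0)$, rather than by a pure inverse-square operator. In the notation of Lemma~\ref{dmaxx ns} this is exactly the degenerate value $\mu=\sqrt{\tfrac14+\beta}=\tfrac12$ (since $\beta=0$), for which formula \eqref{phi zero} breaks down because the coefficient $\gamma/(1-2\mu)$ diverges. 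The two indicial exponents of $P_\gamma$ at $0$ are then $\tfrac12\pm\mu=\{0,1\}$, which differ by an integer; it is precisely this resonance that forces a logarithm into the second solution and accounts for the $s\ln s$ term in the statement.

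First I would exhibit two explicit candidates in $D_{\max}(L_\ell)$ supported near $0$, namely $\phi_1(s)=s$ and $\phi_2(s)=1+\sigma\, s\ln s$, and then check the two defining properties. Square integrability near $0$ is immediate, since $\phi_1\to0$ and $\phi_2\to1$. For membership in $D_{\max}(L_\ell)$ I would compute $L_\ell\phi_i=-\phi_i''+(\zeta(s)/s)\phi_i$ directly: one gets $L_\ell\phi_1=\zeta(s)$, which is smooth up to $0$, and, using $\partial_s^2(s\ln s)=1/s$, one finds $L_\ell\phi_2=\tfrac{\zeta(s)-\sigma}{s}+\sigma\zeta(s)\ln s$. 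The requirement that the leading $1/s$ singularity cancel fixes the coefficient as $\sigma=\zeta(0)=\gamma$; with this choice $\zeta(s)-\gamma=O(s)$ by smoothness of $\zeta$, so the first term is bounded and the second is in $L^2$ near $0$. Hence both candidates lie in $D_{\max}(L_\ell)$, and the value of the logarithmic coefficient is dictated by the resonance.

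Next I would pass from the model to $L_\ell$ and localise, exactly as in the proof of Lemma~\ref{dmaxx ns}: multiplying the model solutions by a smooth cut-off $\chi$ equal to $1$ on $(0,\delta/2)$ and supported in $(0,\delta)$ keeps them in $D_{\max}(L_\ell)$, because the additional commutator terms $2\chi'\phi_i'-\chi''\phi_i$ are smooth and compactly supported away from $0$. To see that $\phi_1,\phi_2$ are independent modulo $D_{\min}(L_\ell)$, I would evaluate the Lagrange bracket $[\phi_1,\phi_2](s)=\phi_1\phi_2'-\phi_1'\phi_2=\gamma s-1$, which tends to $-1\neq0$ as $s\to0$; by the standard Sturm--Liouville boundary-form argument this nonvanishing is exactly the condition for independence modulo the minimal domain. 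Finally, since $\ell\sim(0,+\infty)$ is in the limit-point case at $+\infty$ by Proposition~\ref{prop:sa-infinity}, the deficiency there is zero and $\dim\bigl(D_{\max}(L_\ell)/D_{\min}(L_\ell)\bigr)=2$; the two functions therefore span the quotient, which yields the claimed decomposition $D_{\max}=D_{\min}+\operatorname{span}\{\phi_1,\phi_2\}$.

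The main obstacle is the resonance at $\mu=\tfrac12$: because the indicial roots differ by an integer, the clean power-law pair of Lemma~\ref{dmaxx ns} is no longer available, and one must identify the correct logarithmic second solution and verify that the coefficient of $s\ln s$ is the unique one removing the leading $1/s$ singularity from $L_\ell\phi_2$. Once this cancellation is pinned down, the remaining steps (square integrability, the cut-off perturbation argument, and the Wronskian computation) are routine adaptations of the proof of Lemma~\ref{dmaxx ns}.
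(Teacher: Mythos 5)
Your strategy is exactly the one the paper intends: the paper proves Lemma~\ref{dmaxx ns} by localizing against the model operator $P_{\beta,\gamma}$ with a cut-off and checking explicit candidates, and for Lemma~\ref{dmax focus} it simply states that the argument "can be done as before". Your handling of the resonance $\mu=1/2$, the cut-off commutator argument, the Lagrange-bracket computation for independence modulo $D_{\min}$, and the deficiency count $\dim\bigl(D_{\max}(L_\ell)/D_{\min}(L_\ell)\bigr)=2$ (limit point at $+\infty$ by Proposition~\ref{prop:sa-infinity}, limit circle at $0$) are all correct; indeed the last two steps are spelled out more completely than in the paper.

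There is, however, one point you must address rather than pass over: your computation produces $\phi_2(s)=1+\gamma s\ln s$, whereas the statement you were asked to prove asserts $\phi_2(s)=1-\gamma s\ln s$, and for $\gamma\neq 0$ these are not interchangeable. With the printed sign one gets
\[
L_\ell\bigl(1-\gamma s\ln s\bigr)=\frac{\zeta(s)+\gamma}{s}-\gamma\,\zeta(s)\ln s\sim\frac{2\gamma}{s}\qquad (s\to 0^+),
\]
which is not square integrable near $0$, so $1-\gamma s\ln s$ does \emph{not} belong to $D_{\max}(L_\ell)$; only the $+\gamma$ choice cancels the $1/s$ singularity, exactly as your resonance argument shows (this is also the classical boundary behaviour for Coulomb-type potentials $\gamma/s$). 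In other words, your sign is the correct one, and the lemma as printed carries a sign error (which then propagates to the expansion $u=As+B(1-\gamma s\ln s)+\tilde u$ and to the boundary vector $v_0$ in the subsequent theorem). As a blind proof of the given statement, your proposal is therefore defective in precisely one respect: you silently prove a corrected statement instead of the stated one. You should make the discrepancy explicit — note that the claimed $\phi_2$ fails to lie in the maximal domain whenever $\gamma\neq0$, and that the decomposition $D_{\max}=D_{\min}+\operatorname{span}\{\phi_1,\phi_2\}$ holds with $\phi_2(s)=1+\gamma s\ln s$ in its place. Apart from this reconciliation issue, which is a flaw of the statement rather than of your argument, the proposal is sound.
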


We are now in a position to specify the domains of the self-adjoint extensions of the operator $L_\ell$.
For this aim we will need the following notion, encoding the boundary terms, which we introduce just for the case of the operators $L_\ell$. 
\begin{definition}
     The Lagrange parentheses between two functions $u,v$ are
\[
[u,v]=u \overline{v}'- u'\overline{v}.
\]
\end{definition}

We have the following theorem characterizing self-adjoint extensions of $L_\ell
$.

\begin{proposition}
\label{prop: ext L}
        Let $\ell$ be a leaf of the foliation $\fol$. Then,
        \begin{itemize}
    \item if $\ell$ contains only one limit circle endpoint corresponding to zero, then the domains of self-adjointness of $L_\ell$ are parametrized by $(\alpha,\beta)\in \R^2\setminus\{(0,0)\}$ and are given by
    \begin{equation}
    \label{sl ext}
    D_{\alpha,\beta}(L_\ell)=\left\{u\in D_{\max}(L_\ell) \, : \, \alpha[u,\phi_1](0)+\beta[u,\phi_2](0)=0\right\};
\end{equation}
        \item if $\ell\sim (0,a_1)$, with both $0$ and $a_1$ in the limit circle case, then the domains of self-adjointness of $L_\ell$ are parametrized by two matrices $M,N\in M_2(\mathbb{C})$, such that $$\operatorname{rank}(M:N)=2,\, M EM^\ast=NEN^\ast,\quad\text{where}\quad E=\begin{pmatrix}
0 &-1\\
1 & 0
\end{pmatrix}$$ and are given by
\begin{equation}
\label{sr ext lclc}
    \begin{split}
        D_{M,N}(L_\ell)=&\left\{u\in D_{\max}(L_\ell) \, : \, M\binom{[u, \phi_1](0)}{[u,\phi_2](0)}+N\binom{[u, \phi_3](a_1)}{[u,\phi_4](a_1)}=\binom{0}{0}\right\}
    \end{split}
\end{equation}
\end{itemize}
Conversely, if we restrict the adjoint of $L_\ell$ to a domain of the form as above, we get a self-adjoint extension of $L_\ell$. In particular, once chosen a domain of that form on each leaf, their direct integral defines a self-adjoint extension of $\mathcal{L}$ on the whole foliation, which gives rise to disjoint dynamics.
\end{proposition}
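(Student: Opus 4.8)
The plan is to apply the standard Sturm-Liouville theory for self-adjoint extensions (as in \cite{zettlSturmLiouville2012}) once we have, from Lemmas~\ref{dmaxx ns} and \ref{dmax focus}, an explicit description of $D_{\max}(L_\ell)$ as $D_{\min}(L_\ell)$ augmented by the span of the boundary functions $\phi_i$. The key structural input is von Neumann's deficiency-index count: at each limit-circle endpoint the deficiency is $1$, so an endpoint in the limit-circle case contributes a two-real-dimensional family of boundary data, while a limit-point endpoint imposes no condition. Thus the quotient $D_{\max}(L_\ell)/D_{\min}(L_\ell)$ has real dimension $2$ in the one-endpoint case and $4$ in the two-endpoint case, and self-adjoint extensions correspond precisely to the Lagrangian subspaces of this quotient with respect to the symplectic form given by the boundary bilinear form.

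Let me make this concrete. First I would recall the Lagrange identity: for $u,v\in D_{\max}(L_\ell)$, integrating $\langle L_\ell u,v\rangle-\langle u,L_\ell v\rangle$ by parts twice yields
\[
\langle L_\ell u,v\rangle-\langle u,L_\ell v\rangle=[u,v](a_1)-[u,v](0),
\]
where $[u,v]=u\overline{v}'-u'\overline{v}$ is the Lagrange parenthesis. An operator domain $D$ with $D_{\min}\subset D\subset D_{\max}$ is a self-adjointness domain if and only if this boundary form vanishes identically on $D\times D$ and $D$ is maximal with this property. Next I would observe that, because every $u\in D_{\max}$ differs from an element of $D_{\min}$ by a combination of the $\phi_i$, and because $[u,v]=0$ whenever either argument lies in $D_{\min}$ (the boundary functionals annihilate $D_{\min}$), the boundary form descends to a nondegenerate symplectic form on the finite-dimensional quotient, with coordinates given exactly by the values $[u,\phi_i]$ at the endpoints.

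In the single-endpoint case I would compute the boundary form in the basis $\{\phi_1,\phi_2\}$ and verify that $[\phi_1,\phi_2](0)\neq 0$, so that the form is a nonzero multiple of the standard symplectic form on $\R^2$; its Lagrangian subspaces are exactly the lines $\{\alpha[u,\phi_1](0)+\beta[u,\phi_2](0)=0\}$ parametrized by $(\alpha,\beta)\in\R^2\setminus\{(0,0)\}$, giving \eqref{sl ext}. In the two-endpoint case the quotient is $\C^4$ (or $\R^4$) with symplectic form $[u,v](a_1)-[u,v](0)$, and the Lagrangian subspaces are precisely those cut out by a pair of matrices $(M:N)$ of full rank satisfying the isotropy condition $MEM^*=NEN^*$; this is the classical parametrization reproduced in \eqref{sr ext lclc}. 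The converse direction—that each such domain indeed yields a self-adjoint, not merely symmetric, extension—follows from the maximality built into the Lagrangian (equivalently, half-dimensional isotropic) condition together with the deficiency-index count. Finally, once a self-adjointness domain is fixed on each leaf, the direct-integral operator $\int_\fol^\oplus L_\ell\,d\nu(\ell)$ is self-adjoint because a direct integral of self-adjoint fibers is self-adjoint (see \cite{vonNeumann1939}), and its dynamics is disjoint by construction since no boundary condition couples distinct leaves.

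The main obstacle I expect is purely computational but essential: verifying that the boundary functionals $u\mapsto[u,\phi_i]$ are well-defined and finite on $D_{\max}$ (i.e.\ that the limits at the singular endpoint exist), and that $[\phi_1,\phi_2](0)\neq0$ so the symplectic form is nondegenerate. In the proper-node/saddle case this hinges on the explicit expansions \eqref{phi zero}: one must check that the leading singular behaviors $s^{1/2\mp\mu}$ pair nontrivially under the Wronskian-type bracket and that the subleading $\zeta$-corrections do not contaminate the boundary limit—this is where the smoothness of $\zeta$ up to $s=0$ from Proposition~\ref{prop:b-asymp} is used. The focus/star-node case requires the analogous check with the logarithmic function $\phi_2(s)=1-\gamma s\ln s$ of Lemma~\ref{dmax focus}, where care is needed because the bracket involves derivatives of $s\ln s$.
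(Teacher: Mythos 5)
Your proposal is correct and takes essentially the same route as the paper: the paper also starts from the explicit boundary functions $\phi_i$ of Lemmas~\ref{dmaxx ns} and \ref{dmax focus}, verifies that the Lagrange bracket $[\phi_1,\phi_2](0)$ is a nonzero constant (rescaling the $\phi_i$ so that it equals $1$), and then invokes precisely the Sturm--Liouville/GKN parametrization you describe, by citing Theorem~10.4.5 and Proposition~10.4.2 of \cite{zettlSturmLiouville2012} for the one- and two-limit-circle cases respectively. The only blemish is bookkeeping: the quotient $D_{\max}(L_\ell)/D_{\min}(L_\ell)$ has \emph{complex} dimension $2$ (one limit-circle endpoint) or $4$ (two such endpoints), not real dimension $2$ or $4$ as you state, though this does not affect the structure or the conclusion of your argument.
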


\begin{proof}   
For $\phi_1$ and $\phi_2$ as in Lemma \ref{dmaxx ns} (proper node or saddle), we have $[\phi_1,\phi_2](0)=1$, while for $\phi_1$ and $\phi_2$ as in Lemma \ref{dmax focus} (star node or focus), we have $[\phi_1,\phi_2](0)=2\mu$. Hence, up to replacing $\phi_i$ with $\frac{\phi_i}{\sqrt{2\mu}}$ in the latter case, we can apply \parencite[Theorem 10.4.5]{zettlSturmLiouville2012} for the case of a leaf with only one limit circle, and \parencite[Proposition 10.4.2]{zettlSturmLiouville2012} for the case of a leaf with two limit circle endpoints.
\end{proof}

We now state a theorem regarding self-adjoint extensions that lead to disjoint dynamics for the operator $\bDelta$ on the foliation, defined in \eqref{direct integrals}.

\begin{theorem}
    The whole family of disjoint self-adjoint extensions $\tilde\bDelta$ of the operator $\bDelta$ with domain $\int_\fol^\oplus \mathcal{C}^\infty_0 (\ell)\,d\nu(\ell)$ is given by
    \begin{equation}
        \tilde\bDelta = \int^\oplus \tilde\Delta_\ell\,d\nu(\ell),
    \end{equation}
    where $\tilde\Delta_\ell$ is a self-adjoint extension of $\Delta_\ell$.

    Let $\lambda_0$ be given by Definition~\ref{def:lambda0}, and define the vector $v_0=(v_0^1,v_0^2)$ associated to a point $p$, corresponding to 0, as follows
    \begin{itemize}
    \item if 0 is a proper node or a saddle, then
    \begin{equation*}
        v_0=\left.
        \begin{pmatrix}
            \frac{1}{2\mu}s^{1-2\mu}\left[\left(1-\frac{\gamma}{1-2\mu}s\right)s^{-\frac{1}{2}+\mu+\frac{1}{2\lambda_0}}u(s)\right]'\\[.3em]
           {s^{-\frac{1}{2}+\mu+\frac{1}{2\lambda_0}}u(s)} 
        \end{pmatrix}
        \right|_{s=0^+}
    \end{equation*}
    \item if 0 is a star node or a focus, then \begin{equation*}
   v_0=
        \left.
        \begin{pmatrix}
        su(s)\\[.3em]
        \left[s\left(1+\gamma s\ln s\right)u(s)\right]'
   \end{pmatrix}
   \right|_{s=0^+}
   \end{equation*}

\end{itemize}
  One analogously defines $v_1$ for the endpoint $a_1$.
 
Then all self-adjoint extensions of $\Delta_\ell$ are characterised as follows:
\begin{itemize}
    \item If $\ell$ is a leaf with  no limit circle endpoint, then $\Delta_\ell$ is essentially self-adjoint and $D(\tilde {\Delta}_\ell)=D_{\min}(\Delta_\ell)$. 
    \item  If $\ell$ is a leaf with only one limit circle endpoint (say $0$), then there exists $\gamma_\ell\in (-\infty,+\infty]$ such that
           \begin{equation}
            \label{eq:domain-disjoint-half-line}
            D(\tilde\Delta_\ell) = \left\{ u \in D_{\max}(\Delta_\ell) \mid v_0^1= \gamma_\ell v_0^2 \right\}.
        \end{equation}
             
     \item If $\ell\sim (0,a_1)$, with both $0$ and $a_1$ in the limit circle case, then the domains of self-adjointness of $\Delta_\ell$ are parametrized by two matrices $M,N\in M_2(\mathbb{C})$, such that $$\operatorname{rank}(M:N)=2,\, M EM^\ast=NEN^\ast,\quad\text{where}\quad E=\begin{pmatrix}
0 &-1\\
1 & 0
\end{pmatrix}$$ and given by
\begin{equation}
\label{sr ext lclc}
    \begin{split}
        D_{M,N}=&\left\{u\in D_{\max} \, : \, M\,v_0+N\,v_1=\binom{0}{0}\right\}.
    \end{split}
\end{equation}
\end{itemize}

Conversely, if we restrict the adjoint of $\Delta_\ell$ to a domain of the form as above, then we get a self-adjoint extension of $\Delta_\ell$. In particular, once chosen a domain of that form on each leaf, their direct integral defines a self-adjoint extension of $\bDelta$ on the whole foliation, which gives rise to disjoint dynamics.

\end{theorem}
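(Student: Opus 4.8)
The plan is to split the statement into a global direct-integral reduction and a leaf-wise classification, and then to obtain the latter by transporting Proposition~\ref{prop: ext L} from the potential picture $L_\ell$ on $L^2(\ell,ds)$ back to $\Delta_\ell$ on $L^2(\ell,d\mu_\ell)$ through the unitary $T$ of Lemma~\ref{lem:potential}. I assume throughout the descriptions of $D_{\max}(L_\ell)$ given in Lemma~\ref{dmaxx ns} (proper node or saddle) and Lemma~\ref{dmax focus} (focus or star node), together with the self-adjoint classification of $L_\ell$ in Proposition~\ref{prop: ext L}.

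For the \emph{disjoint reduction}, I would work on $\mathcal{H}=\int^\oplus_\fol L^2(\ell,d\mu_\ell)\,d\nu(\ell)$, equipped with the abelian von Neumann algebra $\mathcal{A}$ of fiber-multiplication (diagonal) operators, whose commutant consists exactly of the decomposable operators (see \cite{vonNeumann1939}). An extension $\tilde\bDelta$ gives rise to disjoint dynamics precisely when $e^{it\tilde\bDelta}$ leaves each fiber $L^2(\ell,d\mu_\ell)$ invariant, i.e.\ commutes with $\mathcal{A}$, which is equivalent to $\tilde\bDelta$ being decomposable. By the direct-integral theory of unbounded self-adjoint operators, a decomposable self-adjoint operator is of the form $\int^\oplus \tilde\Delta_\ell\,d\nu(\ell)$ with $\tilde\Delta_\ell$ self-adjoint for $\nu$-a.e.\ $\ell$; and since $\tilde\bDelta\supset\bDelta=\int^\oplus\Delta_\ell$, each fiber operator $\tilde\Delta_\ell$ is a self-adjoint extension of $\Delta_\ell$. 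Conversely, a $\nu$-measurable family of leaf-wise self-adjoint extensions assembles into a decomposable, hence self-adjoint, extension. This establishes the first displayed identity and reduces the theorem to the classification on a single leaf.

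For the \emph{leaf-wise classification}, the unitary $Tw=\sqrt{\mu_\ell}\,w$ conjugates $\Delta_\ell$ to $L_\ell$, hence induces a bijection between their self-adjoint extensions and between their maximal domains, so Proposition~\ref{prop: ext L} already classifies the extensions. What remains is to rewrite its boundary conditions, phrased via the Lagrange brackets $[u,\phi_i]$ in the $L_\ell$ picture, as conditions on the vectors $v_0,v_1$ in the $\Delta_\ell$ picture. Concretely, for $w\in D_{\max}(\Delta_\ell)$ I set $u=Tw=\sqrt{\mu_\ell}\,w\in D_{\max}(L_\ell)$ and evaluate the limits $[u,\phi_1](0),[u,\phi_2](0)$ using the explicit $\phi_i$ of Lemma~\ref{dmaxx ns} or Lemma~\ref{dmax focus} together with the asymptotics $\mu_\ell\sim s^{1/\lambda_0}$, which follow from Proposition~\ref{prop:b-asymp} via $b=\partial_s\log\mu_\ell$. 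Writing $u\sim c_1\phi_1+c_2\phi_2$ modulo $D_{\min}$, one finds $[u,\phi_2](0)=c_1$ and $[u,\phi_1](0)=-c_2$; the two components of the stated $v_0$ are then checked to equal $c_2$ and $c_1$ respectively, the factors $s^{-1/2+\mu+1/(2\lambda_0)}$ being exactly what extracts these coefficients from $w=u/\sqrt{\mu_\ell}$. Since $(v_0^1,v_0^2)$ is an invertible linear image of $([u,\phi_1](0),[u,\phi_2](0))$, the boundary conditions transfer faithfully. One then reads off the three cases: no limit circle endpoint gives essential self-adjointness (by Theorem~\ref{thm:K} and Proposition~\ref{prop:sa-infinity}), whence $D(\tilde\Delta_\ell)=D_{\min}$; one limit circle endpoint turns $\alpha[u,\phi_1](0)+\beta[u,\phi_2](0)=0$ into $v_0^1=\gamma_\ell v_0^2$ with $\gamma_\ell=\beta/\alpha$ running over all lines in the $(v_0^1,v_0^2)$-plane, i.e.\ $\gamma_\ell\in(-\infty,+\infty]$; two limit circle endpoints yield the matrix condition $M v_0+N v_1=0$ inherited from \parencite[Proposition~10.4.2]{zettlSturmLiouville2012}. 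The converse and the assembly into a disjoint extension of $\bDelta$ follow by reversing these equivalences and invoking the first paragraph.

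I expect the main obstacle to be twofold. On the conceptual side, the delicate point is making the direct-integral reduction rigorous, namely establishing measurability of the fiberwise extensions and the precise equivalence between \emph{disjoint dynamics} and decomposability of $\tilde\bDelta$ relative to the measurable structure on $\fol$ underlying $\nu$. On the computational side, the one genuinely careful calculation is matching the transported Lagrange brackets to the stated components of $v_0$: the subtlety is tracking the $s$-powers so that the apparently divergent prefactors $s^{-1/2+\mu+1/(2\lambda_0)}$ conspire with $\sqrt{\mu_\ell}$ to produce finite limits, which is exactly what membership in $D_{\max}$ guarantees.
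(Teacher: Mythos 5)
Your proposal is correct, and on the heart of the theorem --- the leaf-wise classification --- it follows the same route as the paper: conjugate $\Delta_\ell$ to $L_\ell$ by the unitary $T$ of Lemma~\ref{lem:potential}, expand a generic element of $D_{\max}(L_\ell)$ as $A\phi_1+B\phi_2$ modulo $D_{\min}(L_\ell)$ using Lemmas~\ref{dmaxx ns} and~\ref{dmax focus}, transport back via $T^{-1}(u)=s^{-1/(2\lambda_0)}\bigl(1+O(s)\bigr)u(s)$ (obtained from $b=\partial_s\log\mu_\ell$ and Proposition~\ref{prop:b-asymp}), and check that the stated vector $v_0$ extracts precisely the coefficients $(B,A)$, so that the boundary conditions of Proposition~\ref{prop: ext L} transfer verbatim; your Lagrange-bracket bookkeeping (up to irrelevant antisymmetry/sign conventions) is exactly the paper's computation. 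Where you genuinely diverge is the global part. The paper treats the identity $\tilde\bDelta=\int^\oplus\tilde\Delta_\ell\,d\nu(\ell)$ essentially as the construction defining the disjoint family and spends no argument on it (its proof opens directly with the leaf-wise cases), whereas you actually prove the equivalence: disjoint dynamics $\Leftrightarrow$ the unitary group commutes with the diagonal (fiber-multiplication) algebra $\Leftrightarrow$ $\tilde\bDelta$ is decomposable, and then direct-integral theory for unbounded self-adjoint operators yields fiber operators that are self-adjoint extensions of $\Delta_\ell$. This buys a precise meaning of ``disjoint'' --- important because when $\nu$ is non-atomic the fibers are not subspaces of the direct integral, so naive invariance statements are vacuous --- at the price of the measurability issues you correctly flag (a $\nu$-measurable selection of fiber extensions is needed for the converse). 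Since the paper leaves this point informal, your reduction is a strengthening rather than a deviation, and the remainder of your argument coincides with the paper's proof.
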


\begin{proof}

The case where both enpoints of $\ell$ are limit point is clear. We thus focus on the case where $\ell$ has at least one limit circle endpoint.

Since the operator $\Delta_\ell$ is unitarily equivalent to $L_\ell$, via the transformation $T:L^2(\ell,d\mu_\ell)\to L^2(\ell,ds)$, we can find self-adjoint extensions of $\Delta_\ell$, and then of $\bDelta$, by transforming all the domains back via $T^{-1}$. We do all computations for the case in which the zero endpoint is limit circle. The $a_1$ case is equivalent.

Performing some simple computations, using that $b=(\log\mu_\ell)'$ (see Section \ref{local b}), one finds $T^{-1}(u)=u(s)s^{-1 / 2\lambda_0} (1+ O(s))$. 

Let now $u$ be a general function in $D_{\max}(L_\ell)$. We know that there exists $\tilde{u}\in D_{\min}(L_\ell)$ and $A,B\in\R$ such that, for a proper node or a saddle, 
\[
u(s)= A\left(s^{\frac{1}{2}-\mu}+\frac{\gamma}{1-2\mu}s^{\frac{3}{2}-\mu}\right)+Bs^{\frac{1}{2}+\mu}+\tilde{u}(s),
\]
  while, for a focus or a star node, 
  \[
  u(s)=As+B(1-\gamma s \ln{s})+\tilde{u}(s).
  \] 
From this we can derive the expression of a generic function $T^{-1}(u)\in D_{\max}(\Delta_\ell)$.
Recall that 
$$\beta=\frac{1-2\lambda_0}{4\lambda_0^2},\quad \text{so that}\quad\mu=\frac{\abs{\lambda_0-1}}{2\abs{\lambda_0}}.$$ 
In particular, if $\lambda_0>0$, we get $\mu=\frac{1-\lambda_0}{2\lambda_0}$, while if $\lambda_0<0$, we get $\mu=\frac{\lambda_0-1}{2\lambda_0}$. This leads to
\begin{equation}
    \label{eq:u-dmax1}
     T^{-1}(u)=\begin{cases}
    A\left(s^{\frac{\lambda_0-1}{\lambda_0}}-\frac{\gamma}{1-2\mu}s^{\frac{2\lambda_0-1}{\lambda_0}}\right)+B+s^{-\frac{1}{2\lambda_0}}\tilde{u} \quad\text{if }\lambda_0>0,\\[.3em]
    A\left(1-\frac{\gamma}{1-2\mu}s\right)+Bs^{\frac{\lambda_0-1}{\lambda_0}}+s^{-\frac{1}{2\lambda_0}}\tilde{u} \quad\text{if }\lambda_0<0
 \end{cases}
\end{equation}

for a proper node or a saddle, and
\begin{equation}
    \label{eq:u-dmax2}
    T^{-1}(u)=A+B\left(\frac{1}{s}-\gamma\ln{s}\right)
\end{equation}
for a star node or a focus.

Now we use Proposition \ref{prop: ext L} and the fact that the Lagrange parenthesis $[\phi_i,u]$ are
\[
[\phi_1,u](0)=B
\qquad\text{and}\qquad
[\phi_2,u](0)=A,
\]
to write the domains of self-adjointness of $L_\ell$, and then of $\Delta$, by giving some constraints on the coefficients $A$ and $B$. The result follows by observing that, by \eqref{eq:u-dmax1} and \eqref{eq:u-dmax2}, one has $v_0 = (B,A)^\top$.

\end{proof}

\subsection{The case of half-line leaves via  Von Neumann's theory of self-adjoint extensions}

In this section we analyse the  problem of finding all self-adjoint extensions of
the symmetric operator $L_\ell$ again,
with an alternative method: the von Neumann theory (see e.g. \cite{reedFourier2007} or \cite{akhiezer}). Unlike Sturm-Liouville theory, which defines the domain of self-adjointness by restricting the maximal domain, von Neumann's theory constructs self-adjoint extensions by extending the minimal domain. Actually, such a method yields the functions needed to extend the minimal domain and to define self-adjoint extensions. However, it works globally on the whole leaf, and for this reason, in our setting, the computations {prove less cumbersome} only in the case of a leaf equivalent to a half-line.

Let us then consider a leaf isometric to $(0,\infty)$ with a characteristic point $p$ corresponding to the coordinate $0$ in the limit circle case (the limit point case is trivial). Suppose furthermore that 
$\zeta(s)$ is bounded on $\ell$. As proved in Lemma \ref{lem:potential}, the operator $-\Delta_\ell$ is unitarily equivalent to the operator on $L^2(\ell,ds)$ given by
\begin{equation}
   \label{ellell}
L_\ell=-\partial_s^2+\frac{\beta}{s^2}+\frac{\zeta(s)}{s},
\end{equation}
where for the case of a focus we have that $\beta$ is equal to zero.

We now state the theorem for self-adjoint extensions in the case introduced above. For an exhaustive review on Whittaker functions see \cite{richard:hal-01679505}.

\begin{theorem}
    Let $\ell\sim (0,\infty)$ with $0$ a characteristic point in the limit circle case. Suppose that the function $\zeta$ is bounded on $\ell$. 
  Then the operator $L_\ell$, defined on the domain
    $\mathcal{C}^\infty_0 ([0, + \infty))$ and acting as
    in \eqref{ellell}, admits a family
    of self-adjoint extensions in
    $L^2 (\ell,ds)$, denoted by $L_{\ell, \theta}, \ \theta \in [0, 2\pi)$, each acting as $L_\ell$ and defined on the domain 
    \begin{equation}
D_\theta=D_0+span_\mathbb{C}\{W_++e^{i\theta}W_-\}, 
 \qquad \theta\in[0,2\pi). \label{Dteta}
\end{equation}
Here $W_+$ and $W_-$ are two elements of the family of the Whittaker functions. More precisely, letting $\gamma = \zeta(0)$ and denoting by $W_{a,b}(s)$ the Whittaker function of parameters $a,b\in\mathbb{R}$, we have
\begin{itemize}
    \item if 0 is a proper node or a saddle, then
    \begin{equation}
        \label{eq:whittaker-node-saddle}
      W_+(s)= W_{-\frac{1+i}{2\sqrt{2}}\gamma,\sqrt{\beta-\frac{1}{4}}}(\sqrt{2}(1-i)s)\quad \text{and}\quad
       W_-(s)=W_{\frac{-1+i}{2\sqrt{2}}\gamma,\sqrt{\beta-\frac{1}{4}}}(\sqrt{2}(1+i)s),
    \end{equation}
    \item if 0 is a star node or a focus, then
    \begin{equation*}
        W_+(s)= W_{-\frac{1+i}{2\sqrt{2}},\frac{1}{2}}(\sqrt{2}(1-i)s)\quad \text{and}\quad
       W_-(s)=W_{-\frac{1-i}{2\sqrt{2}}, \frac{1}{2}}(\sqrt{2}(1+i)s).
    \end{equation*}
\end{itemize}
\end{theorem}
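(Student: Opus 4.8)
The plan is to apply von Neumann's theory of self-adjoint extensions and to compute the deficiency subspaces explicitly in terms of Whittaker functions. First I would record that $L_\ell$ is symmetric on the closure $D_0$ of its given domain, and that, by Weyl's criterion, it is in the limit point case at $+\infty$ (this is exactly Proposition~\ref{prop:sa-infinity}, since the leaf is a half-line) and, by hypothesis, in the limit circle case at $0$. Consequently each equation $L_\ell^* u = \pm i u$ possesses exactly a one-dimensional space of $L^2(0,\infty)$ solutions: square-integrability at $+\infty$ (limit point) selects a single solution up to a scalar, and that solution is automatically square-integrable at $0$ (limit circle). Hence the deficiency indices are $(1,1)$, and von Neumann's theorem produces a $U(1)$-family of self-adjoint extensions, with domains $D_\theta = D_0 + \operatorname{span}_{\mathbb{C}}\{u_+ + e^{i\theta}u_-\}$, where $u_\pm$ span $\ker(L_\ell^*\mp i)$, normalized to equal $L^2$-norm so that the phase $e^{i\theta}$ exhausts the unitaries $\ker(L_\ell^*-i)\to\ker(L_\ell^*+i)$.

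The core of the argument is the explicit identification of $u_\pm$ with $W_\pm$. I would reduce to the model operator $P_{\beta,\gamma} = -\partial_s^2 + \beta/s^2 + \gamma/s$ with $\gamma = \zeta(0)$: since $\zeta$ is smooth up to $0$ and bounded on $\ell$, the difference $(\zeta(s)-\gamma)/s$ is a bounded multiplication operator on $(0,\infty)$, hence a symmetric bounded perturbation that leaves the deficiency indices unchanged, and the singular germ at $0$ that governs the extension is carried entirely by $P_{\beta,\gamma}$. I would then solve $P_{\beta,\gamma} u = \lambda u$ for $\lambda = \pm i$ via the substitution $z = \kappa s$ with $\kappa = 2\sqrt{-\lambda}$, which turns the eigenvalue equation into Whittaker's equation $w'' + \left(-\tfrac14 + \tfrac{a}{z} + \tfrac{1/4 - b^2}{z^2}\right)w = 0$, with $a = -\gamma/\kappa$ and $b = \sqrt{\beta + \tfrac14} = \mu$ (the same $\mu$ as in Lemma~\ref{dmaxx ns}). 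For $\lambda = i$ one gets $\kappa = \sqrt2(1-i)$ and for $\lambda = -i$ one gets $\kappa = \sqrt2(1+i)$, which reproduce precisely the arguments and the first indices of $W_+$ and $W_-$ in the statement.

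Finally I would verify that the Whittaker $W$-function, rather than the second ($M$-type) solution, is the correct representative. From the large-argument asymptotics $W_{a,b}(z)\sim e^{-z/2}z^{a}$ and the fact that $\operatorname{Re}(\kappa/2) = 1/\sqrt2 > 0$ for both choices of $\kappa$, the function $W_\pm(s)$ decays exponentially as $s\to+\infty$ and is thus the unique $L^2$ solution there; near $0$ its expansion in $z^{1/2\pm b}$ is square-integrable exactly because $b=\mu<1$, i.e. in the limit circle regime. This pins down $u_\pm = W_\pm$ and yields \eqref{Dteta}. The focus and star-node case is the specialization $\beta = 0$, hence $b = \tfrac12$: the two indicial roots coincide, a logarithm appears, and the local behaviour matches the basis $\phi_1(s)=s$, $\phi_2(s)=1-\gamma s\ln s$ of Lemma~\ref{dmax focus}, while the Whittaker functions reduce to $W_{a,1/2}$ with the stated parameters.

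The hard part will be making the reduction to $P_{\beta,\gamma}$ rigorous while keeping the explicit Whittaker representatives: a bounded symmetric perturbation preserves the existence and cardinality of the extension family, but the genuine deficiency elements of the full $L_\ell$ differ from those of $P_{\beta,\gamma}$ away from $0$. I would therefore either state the sharpest (exact) result for $\zeta\equiv\gamma$, where $W_\pm$ solve $L_\ell u=\pm i u$ on the nose, or argue that only the singular germ at $0$ determines the extension and that $W_\pm$ faithfully capture it. A secondary technical nuisance is the careful bookkeeping of the branch of $\sqrt{-\lambda}$ and of the Whittaker function for these complex arguments, to ensure that $W_\pm$ are indeed the decaying solutions.
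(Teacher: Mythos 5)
Your proposal follows essentially the same route as the paper's proof: von Neumann's theory, reduction to the model operator $P_{\beta,\gamma}$ with $\gamma=\zeta(0)$ (the paper invokes Kato--Rellich for exactly the bounded symmetric perturbation $(\zeta(s)-\gamma)/s$ you describe), the substitution $z=2\sqrt{\mp i}\,s$ with $\operatorname{Re}z>0$ turning the deficiency equations $L_\ell^\ast u=\pm i u$ into Whittaker equations, and the identification of the unique square-integrable solutions with the Whittaker $W$-functions, giving deficiency indices $(1,1)$ and the $U(1)$ family of domains. Your second index $b=\sqrt{\beta+\tfrac14}=\mu$ is in fact the consistent choice (it matches Lemma~\ref{dmaxx ns} and makes the limit-circle condition $\beta\in(-\tfrac14,\tfrac34)$ give $\mu\in(0,1)$), so the $\sqrt{\beta-\tfrac14}$ appearing in the statement and the relation $\tfrac14-\mu^2=\beta$ in the paper's proof are sign slips rather than a discrepancy in your argument.
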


\begin{proof}
We start by observing that, by Kato-Rellich Theorem, for the purpose of determining the self-adjoint extensions of $L_\ell$ we can assume w.l.o.g.~that $\zeta(s)=\gamma$ for all $s>0$.

To construct all self-adjoint extensions of $L_\ell$
defined on $\mathcal{C}^\infty_0 (\ell)$
we apply Von Neumann's theory. To this aim we 
find the solutions of the so called deficiency equations $L_\ell^* u = \pm i u$, and use them to extend the minimal domain of $L_\ell$. 

 Repeated integration by parts shows that the action of $L_\ell^*$ is the same as that of $L_\ell$ in \eqref{ellell}. Thus the deficiency equations for $L_\ell$ read 
\begin{equation}
    \label{whit}
-u''(s)+\frac{\beta u(s)}{s^2}+\frac{\gamma u(s)}{s}=\pm i u(s).
\end{equation}
Performing the change of 
variable $z=2\sqrt{\mp i}s$, with the choice
$\sqrt{\mp i} = \frac{1\mp i}{\sqrt 2}$ that
guarantees $\Re z > 0$, one can rewrite eq. \eqref{whit} as
\[
v''(z)+\left(-\frac{1}{4}+ \frac{\lambda}{2z}-\frac{\beta}{z^2}\right)v(z) = 0,
\]
which are two Whittaker equations 
 (see Sec. 9.22-9.23 in \cite{gradstein})
with parameters $\lambda_\pm =\frac{(\mp i)^{\frac{3}{2}}}2\gamma = \frac{-1\pm i}{2\sqrt 2} \gamma$ and $\mu$ defined by $\frac{1}{4}-\mu^2=\beta$.  By the general theory of the Whittaker equations, one finds that for
every such equation  the only solution which
is square-integrabile is the Whittaker function
$W_{\lambda, \mu},$ with $\mu \in (-1, 1)$
which is guaranteed by the fact that $\beta\in (-\frac{1}{4},\frac{3}{4})$.
Then, both deficiency subspaces $K_\pm = \ker(L_\ell^*\mp i)$ have dimension one and are generated by the Whittaker functions $W_{\lambda_\pm,\mu}$. One finally has
\begin{equation}
        K_+=\ker (L_\ell^\ast-i)=\operatorname{span}\{W_+\}
        \qquad\text{and}\qquad
        K_-=\ker (L_\ell^\ast+i)=\operatorname{span}\{W_-\}.
    \end{equation}

Furthermore, from $\overline{W}_-=W_+$ it follows
$\| W_+\|_{L^2 (\R^+)} = \| W_-\|_{L^2 (\R^+)}$, therefore every unitary map $U$ from the space $K_+$ and
the space $K_-$
is characterized by a parameter $\theta \in [0, 2 \pi)$ such that $UK_+ = e^{i \theta} K_-.$ The
self-adjointness domains in \eqref{Dteta} follows directly from 
von Neumann's theory of self-adjoint extensions and the proof is complete for the case of a proper node or a saddle.
\end{proof}

\begin{remark}
    The self-adjoint extensions of $L_\ell$ are unitarily equivalent to the self-adjoint extensions of $\Delta_\ell$ whose domains are given in \eqref{eq:domain-disjoint-half-line}. 
    This can be directly checked by an asymptotic expansion of the Whittaker functions $W_+$ and $W_-$.

    More precisely, in the case of a star node or a focus, the expression of the Whittaker functions is
    $$
    \begin{aligned}
W_{\lambda_\pm, \frac{1}{2}}(z)= & -\frac{ z e^{-\frac{z}{2}}}{\Gamma\left(-\lambda_\pm\right) \Gamma\left(1-\lambda_\pm\right)} \\
& \times\left\{\sum_{k=0}^{\infty} \frac{\Gamma\left(1+k-\lambda\right)}{k!(1+k)!} z^k\left[\Psi(k+1)+\Psi(k+2)-\Psi\left(k-\lambda+1\right)-\ln z\right]\right. \\
& \left.-\frac{1}{z} \left(\Gamma(1) \Gamma(-\lambda)\right)\right\},
\end{aligned}
    $$
     where $\Psi$ is the Euler's $\Psi$ function (see Sec. 8.36 in \cite{gradstein}), which one can check to have the same leading terms as in Lemma \ref{dmax focus}.
    
     In the case of a node or saddle characteristic point, instead, there exist $\xi,\eta\in \mathbb{C}$ such that
    \begin{equation*}
        \begin{split}
        W_+(s) +e^{i\theta}W_-(s)
        &= (\xi+e^{i\theta}\bar\xi)\left(s^{\frac{1}{2}-\mu}+\frac{\gamma}{1-2\mu} s^{\frac{3}{2}-\mu}\right)+ (\eta+e^{i\theta}\bar\eta)s^{\frac{1}{2}+\mu}- +O(s^{\frac{3}{2}+\mu})\\
        &= (\xi+e^{i\theta}\bar\xi)\phi_1(s)+ (\eta+e^{i\theta}\bar\eta)\phi_2(s) +O(s^{\frac{3}{2}+\mu}).
        \end{split}
    \end{equation*}
    Here, $\phi_1,\phi_2$ are the functions in $D_{\max}(L_\ell)$ introduced in Proposition~\ref{prop: ext L}. 
    Observe that we have
    \begin{equation}
        \alpha[W_+ +e^{i\theta}W_-,\phi_1](0) - [W_+ +e^{i\theta}W_-,\phi_2](0) = 0, \qquad \alpha = \frac{\eta+e^{i\theta}\bar\eta}{\xi+e^{i\theta}\bar\xi}.
    \end{equation}
    Since, as one can easily check,  $\alpha$ is real, the self-adjoint extensions obtained in this section coincide
    with those obtained in Proposition~\ref{prop: ext L}.

    A similar result holds in the star node or focus case, with $\phi_1,\phi_2$ given in Lemma~\ref{dmax focus}
\end{remark}

\appendix
\section{The surface measure and its disintegration along the leaves}
\label{sec: disi}

In this section we present some remarks about the surface measure on $S$. We start by proving that the measure $\mu$ goes to zero at characteristic points.
\begin{proposition}
\label{prop: mu va a zero}
    Let $S$ be a surface embedded in a 3-dimensional contact sub-Riemannian manifold $M$. Then the surface measure $\mu$ defined on $S\setminus C(S)$ by \eqref{hor unit norm}, extends by continuity to zero at characteristic points.
\end{proposition}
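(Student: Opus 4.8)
The plan is to work entirely in a neighbourhood of a single characteristic point $p\in C(S)$ and to produce an explicit local formula for $\mu$ as a density, from which the vanishing at $p$ becomes manifest. First I would fix a local defining function $u$ for $S$ (with $du\neq 0$), an orthonormal frame $\{X_1,X_2\}$ of $D$, the Reeb field $X_0$, and the dual coframe $\{\omega^1,\omega^2,\omega\}$, so that by the normalisation of the Popp volume one has $\mathcal{P}=\omega^1\wedge\omega^2\wedge\omega$. Writing $a=X_1u$, $b=X_2u$ and $c=X_0u$, the characteristic condition $D_p=T_pS$ forces $a(p)=b(p)=0$, while $c(p)=X_0u(p)\neq 0$: indeed the Reeb field $X_0$ is transverse to $D$, hence to $T_pS=D_p$, so $du(X_0)(p)\neq 0$. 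The horizontal normal reads $N=(a^2+b^2)^{-1/2}(aX_1+bX_2)$, so, using $\iota_{X_1}\mathcal{P}=\omega^2\wedge\omega$ and $\iota_{X_2}\mathcal{P}=-\omega^1\wedge\omega$,
\[
\mu=\iota_N\mathcal{P}=\frac{1}{\sqrt{a^2+b^2}}\left(a\,\omega^2\wedge\omega-b\,\omega^1\wedge\omega\right).
\]

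The key step is to restrict this $2$-form to $TS=\ker du$. On $TS$ the relation $du=a\omega^1+b\omega^2+c\omega=0$ lets me solve $\omega|_{TS}=-c^{-1}(a\omega^1+b\omega^2)|_{TS}$, which is legitimate since $c\neq 0$ near $p$. Substituting this into the displayed formula and simplifying the wedge products yields
\[
\mu|_{TS}=\frac{\sqrt{a^2+b^2}}{c}\,\omega^1\wedge\omega^2\big|_{TS}.
\]
Equivalently, one may verify the ambient identity $du\wedge\sigma=(a^2+b^2)\mathcal{P}$ for $\sigma=a\,\omega^2\wedge\omega-b\,\omega^1\wedge\omega$, which isolates the same scalar factor. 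I expect this restriction computation to be the only genuine content of the argument; everything else is bookkeeping.

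To conclude I would read off the behaviour of the three factors as $q\to p$. The $2$-form $\omega^1\wedge\omega^2|_{TS}$ is the restriction of a smooth $2$-form, and it is non-degenerate at $p$: since $T_pS=D_p$ and $\{\omega^1,\omega^2\}$ is dual to the orthonormal frame $\{X_1,X_2\}$, it coincides there with the $g$-area form of $D$, which is non-zero, hence it stays bounded and non-vanishing near $p$. The denominator $c$ is continuous with $c(p)\neq 0$, and the numerator $\sqrt{a^2+b^2}$ is continuous and tends to $0$, since $a,b$ are smooth and vanish at $p$. Expressing $\mu=f\,d\xi_1\wedge d\xi_2$ in any fixed smooth coordinate frame $(\xi_1,\xi_2)$ on $S$, the coefficient $f$ is continuous on $S\setminus C(S)$ and satisfies $f\to 0$ at $p$; setting $f(p)=0$ then provides the desired continuous extension. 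The main obstacle is essentially conceptual and underlies this vanishing: as $q\to p$ the horizontal normal $N$ becomes tangent to $S$, because $D_q\to D_p=T_pS$, so $N$ and the two tangent directions become coplanar and the volume they span collapses; the computation above is just the quantitative form of this observation.
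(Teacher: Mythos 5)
Your proof is correct and follows essentially the same route as the paper's: both compute $\mu=\iota_N\mathcal{P}$ in the coframe dual to $(X_1,X_2,X_0)$, restrict to $TS$ using the relation $X_1u\,\omega^1+X_2u\,\omega^2+X_0u\,\omega=0$ on $\ker du$, and obtain $\mu|_{TS}=\frac{\norm{\nabla_H u}}{X_0u}\,\omega^1\wedge\omega^2|_{TS}$, which vanishes at $p$ because the horizontal gradient does. The only cosmetic difference is that the paper normalizes $X_0u\equiv 1$ where you keep the factor $c=X_0u$ and argue $c(p)\neq 0$; your explicit check that $\omega^1\wedge\omega^2|_{TS}$ is non-degenerate at $p$ is a detail the paper leaves implicit.
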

\begin{proof}    
    Let $p\in C(S)$ and assume that $S$ is described, locally at $p$, as the zero locus of a smooth function $u$, with $du\ne0$. Let $X_1,X_2,X_0$ be a local frame with $\{X_1,X_2\}$ an orthonormal frame for the distribution and $X_0$ the Reeb vector field. Then, by \eqref{hor unit norm}, outside $p$ we have
    \[
   \mu= -\frac{X_2 u}{\norm{\nabla_H u}} \ dX_1\wedge dX_0+\frac{X_2 u}{\norm{\nabla_H u}} \ dX_2\wedge dX_0,  
    \]
    where $\nabla_H\,u$ is the horizontal gradient $\nabla_H\,u=(X_1\,u,X_2\,u)$.
    
    Since $du\ne0$, we can assume that $X_0\,u\equiv1$. Using the fact that on $TM$ it holds $X_1u \,dX_1+X_2u \,dX_2+X_0u \,dX_0=0$, we have
    \[
    \mu=\norm{\nabla_{H}u}dX_1 \wedge dX_2.
    \]
  In particular one finds that this goes to zero at characteristic points, because so does the horizontal gradient.
\end{proof}
 
We now study of the disintegration of $\mu$. 

Given a complete vector field $X$ and a point $p_0$, the flow of $X$ starting at $p_0$ defines a map
\begin{equation}
    \phi(t) = e^{tX}(p_0).
\end{equation}
We say that $I\subset \R$ is an \emph{embedding interval} if $0\in I$ and $\phi:I\to \phi(I)$ is an embedding\footnote{That is, $d\phi$ is injective and the topology induced by $\phi$ coincides with the restriction of the topology of $S$ to $\phi(I)$.}. 
When $X(p_0)\neq 0$, we have three cases for the orbit of $X$ passing through $p_0$ :
\begin{enumerate}
    \item [(i)] The orbit is $T$-periodic. In this case, $(-T/2,T/2)$ is an embedding interval.
    \item [(ii)] The orbit is recurrent\footnote{By recurrent here we mean that the orbit contains at least one point $p$ such that for any neighbourhood $U$ of $p$ and any time $t\in \R$, there exists $t^\ast>t$ such that $e^{t^\ast X}(p)\in U$.} but non-periodic. In this case, any finite interval $(-a,a)$, $a>0$, is an embedding interval. Notice that $\mathbb{R}$ is not an embedding interval.
    \item [(iii)] The orbit is non-recurrent, in which case $\mathbb{R}$ is an embedding interval.
\end{enumerate}

We have the following.

\begin{lemma}[Almost global rectification of a vector field]
\label{lem: rect}
    Let $X$ be a smooth complete vector field on a 2-dimensional smooth manifold $S$ and let $p_0\in S$ be a point such that $X(p_0)\ne 0$. 
    For every embedding interval $I\subset \R$ there exists a smooth function $\eta:I\to (0,\infty)$ and a local chart $\phi:U\to S$ defined on
    \[
    U=\{(t,s)\in \R^2: t\in I \text{ and } \abs{s}<\eta(t)\},
    \]
    and such that $\phi_\ast \partial_t=X$. 
\end{lemma}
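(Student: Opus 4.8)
The plan is to realize the chart as the flow of $X$ emanating from a short transversal through $p_0$, and then to restrict this flow map to a tube on which it is injective. Since $X(p_0)\neq 0$ and $\dim S=2$, I would first pick a smooth embedding $\sigma\colon(-\epsilon_0,\epsilon_0)\to S$ with $\sigma(0)=p_0$ and $\sigma'(0)$ transverse to $X(p_0)$, and set
\[
\phi(t,s)=e^{tX}(\sigma(s)),\qquad t\in I,\ \abs{s}<\epsilon_0.
\]
This is well defined on all of $I\times(-\epsilon_0,\epsilon_0)$ because $X$ is complete, and it satisfies $\phi_\ast\partial_t=X$ identically, since $\partial_t\phi(t,s)=X(e^{tX}\sigma(s))=X(\phi(t,s))$. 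Its restriction to the axis, $\gamma(t):=\phi(t,0)=e^{tX}(p_0)$, is exactly the orbit map, which by hypothesis is an embedding of the embedding interval $I$. Moreover $d\phi_{(t,0)}$ is an isomorphism for every $t$: the linear isomorphism $d(e^{tX})_{p_0}$ sends the basis $\{X(p_0),\sigma'(0)\}$ of $T_{p_0}S$ to $\{X(\gamma(t)),\,d(e^{tX})_{p_0}\sigma'(0)\}$, which are the images of $\partial_t,\partial_s$ under $d\phi_{(t,0)}$ and hence a basis of $T_{\gamma(t)}S$. By continuity $d\phi$ stays invertible on a neighbourhood of the axis $I\times\{0\}$, so $\phi$ is a local diffeomorphism there. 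It then suffices to find $\eta$ so that $\phi$ is injective on $U=\{\abs{s}<\eta(t)\}$, since an injective local diffeomorphism is a diffeomorphism onto its (open) image and thus a chart.

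The core of the argument is injectivity, which I would obtain in two stages. The first is semilocal: for every compact subinterval $J\subset I$ there is $\epsilon_J>0$ with $\phi$ injective on $J\times(-\epsilon_J,\epsilon_J)$. Suppose not; then colliding pairs $(t_n,s_n)\neq(t_n',s_n')$ in $J\times(-\epsilon_0,\epsilon_0)$ with $s_n,s_n'\to0$ would, by compactness of $J$, subconverge to $(t_\ast,0)$ and $(t_\ast',0)$. Passing to the limit in $\phi(t_n,s_n)=\phi(t_n',s_n')$ gives $\gamma(t_\ast)=\gamma(t_\ast')$, whence $t_\ast=t_\ast'$ because $\gamma$ is injective on $I$; but then both sequences eventually enter the neighbourhood of $(t_\ast,0)$ on which $\phi$ is an injective local diffeomorphism, a contradiction. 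This is one place where the embedding-interval hypothesis is essential: it forbids the orbit from returning close to a fixed compact arc of itself.

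The second, more delicate stage promotes these semilocal radii to a single positive $\eta$ over the possibly non-compact $I$, ruling out collisions in which one parameter stays bounded while the other escapes toward an endpoint of $I$. Here I would use that $\gamma$ is a \emph{homeomorphism onto its image}: each point $\gamma(t)$ has a neighbourhood $O\subset S$ with $O\cap\gamma(I)=\gamma(W)$ for a bounded parameter window $W\ni t$, so the image of all non-adjacent parameters stays at a positive distance from $\gamma(t)$. Fixing an exhaustion $J_1\Subset J_2\Subset\cdots$ of $I$ by compact intervals, I would choose $\eta$ on each shell small enough that, first, $\phi(t,\cdot)$ remains within a third of the distance from $\gamma(t)$ to the image of the non-adjacent shells, and second, $\eta$ lies below the semilocal radius of a fixed compact window containing the adjacent shells. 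The first condition, by the triangle inequality, prevents any collision between non-adjacent parameters (their tube-images are kept in disjoint regions), so every remaining collision has both parameters in a common bounded window; the second condition then excludes those by the semilocal step. Finally I would pass to a smooth $\eta\colon I\to(0,\infty)$ lying below this piecewise choice (via a partition of unity), obtaining a chart $\phi\colon U\to S$ with $\phi_\ast\partial_t=X$.

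I expect the globalization of the third paragraph to be the main obstacle: the straightening and the semilocal injectivity are routine, but organizing the two competing demands---thinning the tube enough to separate spatially close but parametrically distant pieces of the orbit, while keeping it an \emph{open} neighbourhood of the axis---into a single consistent $\eta$, and verifying that every possible collision falls into one of the two controlled regimes (non-adjacent, excluded by separation; adjacent, excluded semilocally), is where the care is required.
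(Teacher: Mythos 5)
Your proposal is correct, and it builds the same underlying object as the paper---the flow map $(t,s)\mapsto e^{tX}(\sigma(s))$ out of a short transversal through $p_0$---but the way you turn it into a chart on a tube around the whole axis is genuinely different. The paper never considers the global flow map: it constructs finite flow boxes $\Phi_n$ on rectangles $W_n$ inductively along the orbit, imposing at each step that the new piece $\Phi_n(W_n\setminus W_{n-1})$ avoids the union of all previous images (this is where the embedding-interval hypothesis enters for them), glues these into a diffeomorphism on $W=\bigcup_n W_n$, and then needs a separate continuation argument---supposing the reachable time $\bar t$ falls short of the endpoint $b$ and inserting an extra flow box around $e^{\bar t X}(p_0)$ to push past it---to conclude that the construction exhausts $I$. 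You instead define $\phi$ on all of $I\times(-\epsilon_0,\epsilon_0)$ at once (legitimate, by completeness of $X$) and reduce the whole lemma to injectivity on a thin tube, which you obtain in two regimes: parametrically close collisions are excluded by your compactness/sequential argument on compact windows (using only injectivity of the orbit map plus local invertibility of $d\phi$ along the axis), while parametrically distant collisions are excluded metrically, since the homeomorphism-onto-image property keeps the orbit pieces over non-adjacent shells of an exhaustion at positive distance from $\gamma(t)$, and the one-third separation plus triangle inequality then forbids tube intersections. What the paper's route buys is that it stays purely local and constructive---no auxiliary distance function, no exhaustion bookkeeping---and settles the domain and injectivity simultaneously; what your route buys is that it bypasses the delicate limiting step at $\bar t$ entirely, cleanly separates the three issues (smoothness, local invertibility, global injectivity), and makes explicit exactly how the embedding-interval hypothesis is used (it is precisely what rules out far-parameter collisions). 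The details you defer---choosing shells of definite parameter length so that non-adjacency implies parameter separation, positivity and continuity of the distance to non-adjacent shells on each compact shell, the finitely many semilocal constraints per shell, and smoothing the resulting piecewise choice of $\eta$---are indeed routine and all go through.
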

\begin{remark}
   By construction, if the embedding interval is bounded, then $\eta$ can be chosen to be constant. When $I=\R$, and $e^{\R X}(p_0)$ connects two foci, the open set $U$ can be chosen to be fusiform.
\end{remark}

\begin{proof}
We show that the statement holds on embedding intervals of the form $(-\delta_0,b)$ by constructing a local chart on
\begin{equation}
    \label{eq:form-W}
    U = \{ (t,s)\in\mathbb{R}^2 : t\in (-\delta_0,b)\text{ and }|s|<\eta(t)\},
\end{equation}
for some $\delta_0>0$ arbitrarily small.
The same argument extends to the general case.

Since $X(p_0)\ne 0$, there exists a neighbourhood ${V}$ of $\bar{p_0}$ in which $X\ne 0$. 
Thanks to the fact that $I$ is an embedding interval, we can also assume that $\{e^{tX}(p_0): t\in I\}\cap V$ is connected.
Let $\gamma:(- 1, 1)\to S$, $\gamma(0)={p_0}$, be a curve inside ${V}$ and transversal to $X$. 
Then, there exists $W_0= (-s_0,s_0)\times (-\delta_0, \delta_0)$ such that $\Phi:(s,t)\in W_0\to e^{tX}(\gamma(s))\in S$ is a diffeomorphism from $W_0$ onto its image. This is possible since $D\Phi(t,s)$ is invertible near $(0, 0)$. Indeed, $X(\gamma(s))$ and $\dot\gamma(s)$ are linearly independent for $|s|<1$, and

\[
\frac{\partial \Phi}{\partial t}(t,s)=X(\Phi(t,s))=(e^{tX})_\ast X(\gamma(s)),
\qquad
\frac{\partial \Phi}{\partial s}(t,s)=(e^{tX})_\ast (\dot\gamma(s)).
\]

Iterating this procedure, we construct a sequence of points $(p_n)_{n\in\mathbb{N}}$, open sets $(W_n)_{n\in\mathbb{N}}$, and diffeomorphisms $\Phi_n:W_n\to \Phi_n(W_n)$ such that 
\begin{itemize}
    \item $W_n = (-s_n,s_n)\times (t_n-\delta_n,t_n+\delta_n)$ for $s_n,\delta_n>0$, where $t_n = \frac{1}{2}\sum_{j=0}^{n-1}\delta_j$;
    \item $p_n = \operatorname{exp}(t_n X)(p_0)$ for any $n\ge 1$;
    \item $\Phi_n(s,t)=\operatorname{exp}\left(t  X\right)(\gamma(s))$;
    \item $\Phi_n(W_n\setminus W_{n-1}) \cap \bigcup_{m=0}^{n-1} \Phi_m(W_m)= \emptyset$.
\end{itemize}
Since, by construction, $\Phi_n$ coincides with $\Phi_{n+1}$ on $W_n\cap W_{n+1}$ this allows to define a map $\Phi : W \to \Phi(W)$, where  $W=\bigcup_{n=0}^{+\infty} W_n$. The last condition above, which we can require thanks to the fact that $I$ is an embedding interval, implies that $\Phi$ is a diffeomorphism.

To conclude the proof it suffices to show that $W$ contains an open set of the form \eqref{eq:form-W}. Observe that $(t_n)_n$ is an increasing sequence and that $t_n<b$ for any $n$. Hence, up to eventually reducing $W$, we have
\begin{equation}
    W = \{ (t,s)\in\mathbb{R}^2 : t\in (-\delta_0,\bar t)\text{ and }|s|<h(t)\},
\end{equation}
where $h(t) = s_{n}$ for $t\in [t_n,t_{n+1}]$ and $\bar t =\lim_n t_n\le b$. Since it is always possible to find a smooth function $\eta$ such that $\eta(t)\le h(t)$, we are left to prove that we can perform the above construction in such a way that $\bar  t = b$. 

Let us assume that $\bar t<b$ and let us show that we can extend the above construction to yield a final time strictly bigger than $\bar t$. 
Let $\bar p:=e^{\bar t X}(p_0)$.
Since $X(\bar p)\neq 0$, there exists a curve $\tilde \gamma:(-1,1)\to S$ transversal to $X$, such that $\tilde\gamma(0)= \bar p$. Hence, we can construct as above a local diffeomorphism $\Psi: A \to \Psi(A)$ where $A = (-s',s')\times (-\delta',\delta')$. In particular, there exists $n_0\in \mathbb{N}$ such that $\bar t-t_{n_0} < \delta'$ and hence, up to reducing $s_{n_0}$, we have $\exp(t_{n_0} X)\gamma(s)\in \Psi(A)$ for any $|s|<s_{n_0}$. This shows that we can modify the construction above, replacing $\delta_{n_0}$ with $\bar t + \delta' - t_{n_0}$. Here we need to possibly reduce again $s_{n_0}$ in order to have everything well defined when $t\in (-\delta_{n_0},0)$) In particular, $t_{n_0+1}>\bar t$.
\end{proof}

By exploiting the coordinates given by the previous rectification theorem, we obtain the following.

\begin{lemma}[Disintegration]
\label{lem: dis}
Let $\mu$ be a smooth measure on $S$ and let $U$ be the open set given by Lemma \ref{lem: rect} for some smooth complete vector field $X$ such that $X(p_0)\neq 0$. Then, for any given smooth function $\nu$, $\mu=\mu_s(t)dt\wedge  \nu(s) ds$ on $U$, where $(s,t)\in U\mapsto \mu_s(t)$ is a smooth function.
\end{lemma}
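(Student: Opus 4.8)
The plan is to work entirely in the rectifying chart furnished by Lemma~\ref{lem: rect}. Recall that this chart is a \emph{diffeomorphism} $\phi:U\to\phi(U)\subset S$ satisfying $\phi_*\partial_t=X$, where $U=\{(t,s):t\in I,\ |s|<\eta(t)\}$. The first observation I would record is that $\phi(U)$ contains no characteristic point: by hypothesis $X(p_0)\neq0$, and the construction of $\phi$ in Lemma~\ref{lem: rect} takes place inside a neighbourhood on which $X\neq0$, so $X$ is nonvanishing throughout $\phi(U)$. Hence, by Proposition~\ref{prop: mu va a zero} together with the smoothness of $\mu$ away from $C(S)$, the pullback $\phi^*\mu$ is a smooth, nowhere-vanishing $2$-form (equivalently, a smooth positive density) on $U$.

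Next I would pass to coordinates. Any smooth $2$-form on an open subset of $\mathbb{R}^2$ has the form
\[
\phi^*\mu = F(t,s)\,dt\wedge ds
\]
for a uniquely determined smooth function $F:U\to\mathbb{R}$, and the nonvanishing of $\mu$ on $\phi(U)$ forces $F$ to have constant sign, so that $F(t,s)\neq0$ for every $(t,s)\in U$. This is the only step where the geometry enters: the rectification guarantees that the first coordinate $t$ is the flow parameter of $X$, so the curves $\{s=\mathrm{const}\}$ are exactly the leaves of $\fol$ and $dt$ measures displacement along them. With this in hand, and given a smooth, nowhere-vanishing function $\nu=\nu(s)$, I would simply set
\[
\mu_s(t):=\frac{F(t,s)}{\nu(s)},\qquad (t,s)\in U,
\]
which is smooth on $U$ as the quotient of a smooth function by a smooth nowhere-zero one, and which yields the asserted factorisation $\mu=F(t,s)\,dt\wedge ds=\mu_s(t)\,dt\wedge\nu(s)\,ds$. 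To justify the name \emph{disintegration}, one finally invokes Fubini's theorem: for any Borel set $A\subset\phi(U)$,
\[
\mu(A)=\int\Big(\int_{\{s=\mathrm{const}\}}\mathbf{1}_A\,\mu_s(t)\,dt\Big)\,\nu(s)\,ds,
\]
so that $\mu_s(t)\,dt$ is the conditional measure on the leaf labelled by $s$ and $\nu(s)\,ds$ the measure on the transversal leaf space.

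I do not expect a substantial obstacle once the chart of Lemma~\ref{lem: rect} is available, since the entire content is the reduction to rectifying coordinates, in which the leaf structure becomes the trivial product foliation $\{s=\mathrm{const}\}$. The two points deserving care are: (i) verifying that $U$ is free of characteristic points, which is what makes $F$ nowhere-zero and the division by $\nu$ legitimate; and (ii) noting that the factorisation is performed on \emph{all} of $U$ simultaneously — including the possibly fusiform shape of $U$ when $I=\mathbb{R}$ — which is permissible precisely because $\phi$ is a global diffeomorphism on $U$, not merely a local one, so that $\phi^*\mu$ is a genuine smooth $2$-form on the whole of $U$. The freedom in the choice of $\nu$ reflects exactly the standard gauge ambiguity $\mu_s\mapsto g(s)\mu_s$, $\nu\mapsto\nu/g$, already anticipated in the remark that each leafwise measure is determined only up to a multiplicative factor constant along the leaf.
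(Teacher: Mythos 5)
Your proposal is correct and takes essentially the same route as the paper, which states this lemma as a direct consequence of Lemma~\ref{lem: rect}: pull $\mu$ back through the rectifying diffeomorphism, write $\phi^*\mu = F(t,s)\,dt\wedge ds$ with $F$ smooth, and set $\mu_s(t) = F(t,s)/\nu(s)$. The only superfluous step is your appeal to Proposition~\ref{prop: mu va a zero} to conclude that $F$ is nowhere zero: in the lemma $\mu$ is an arbitrary smooth measure, and the factorisation only requires the transversal density $\nu$ to be nowhere-vanishing, not $F$.
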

The measure $\nu(s)ds$ in the statement is an arbitrary 1-dimensional measure that is defined transversally to the leaves. In the system of coordinates given by the Lemma, the function $\nu(s)$ can be taken equal to one, however one has to consider that different $\nu(s)$ give rise to different $\mu_s(t)$. 
Nevertheless, for fixed $s$, two different decompositions $\mu_s(t)$, corresponding to different $\nu(s)$, differ only by a non zero constant.

Thanks to the above lemma, we can actually define an intrinsic notion of measure on $\ell$, coming from the disintegration of $\mu$. We explain this in the following Proposition.

\begin{proposition}
    Let $S$ be a surface embedded in a 3-dimensional contact sub-Riemannian manifold $M$ and let $\ell\in \fol$ be a leaf of the characteristic foliation induced on $S$.
    Then an intrinsic measure $\mu_\ell$ on $\ell$ is defined up to a constant, and it holds that $\Delta_\ell = \operatorname{div}_{\mu_\ell}\nabla_\ell$.
\end{proposition}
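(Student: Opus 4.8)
The plan is to show that the disintegration procedure of Lemma~\ref{lem: dis}, carried out leaf by leaf, produces a one-dimensional measure on each $\ell$ that is canonical up to a multiplicative constant, and that this constant does not affect the operator $\div_{\mu_\ell}\nabla_\ell$. First I would fix a leaf $\ell\in\fol$ and a point $p_0\in\ell$ with $X(p_0)\neq 0$ (such a point exists since $\ell\subset S\setminus C(S)$). Applying Lemma~\ref{lem: rect} to the characteristic vector field $X$ on a maximal embedding interval $I$ covering $\ell$, I obtain a chart $\phi:U\to S$ with $\phi_\ast\partial_t = X$, where $U=\{(t,s):t\in I,\ |s|<\eta(t)\}$. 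In these coordinates the surface measure reads, by Lemma~\ref{lem: dis}, $\mu=\mu_s(t)\,dt\wedge\nu(s)\,ds$ for an arbitrary transversal factor $\nu(s)\,ds$. The natural candidate for the induced measure on $\ell=\{s=0\}$ is then $\mu_\ell := \mu_0(t)\,dt$, i.e. the restriction of the $t$-density to the leaf.

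The key step is to verify that $\mu_\ell$ is intrinsic up to a positive constant, independent of all the choices made. There are two sources of ambiguity. The first is the transversal measure $\nu(s)\,ds$: as remarked after Lemma~\ref{lem: dis}, changing $\nu$ rescales $\mu_s(t)$, but for fixed $s=0$ two decompositions differ only by a nonzero constant, so $\mu_0(t)\,dt$ is well defined up to a constant. The second is the choice of chart $\phi$, equivalently the choice of base point $p_0$ and transversal curve $\gamma$. I would argue that any two such rectifying charts differ by a coordinate change of the form $(t,s)\mapsto(t+c(s),\,\tilde s(s))$ preserving $\partial_t$ along $\ell$ (since both send $\partial_t$ to $X$, the $t$-coordinates agree up to an $s$-dependent shift and the transversal coordinate is relabeled); restricting to $s=0$, the $t$-coordinate on $\ell$ is fixed up to an additive constant, which is exactly the flow parametrization of $X$ along $\ell$, so $\mu_0(t)\,dt$ transforms only by the Jacobian of the transversal relabeling, a positive constant. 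Hence $\mu_\ell$ is canonically attached to $\ell$ modulo a positive scalar.

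It remains to prove the identity $\Delta_\ell=\div_{\mu_\ell}\nabla_\ell$. Here I would recall from Section~\ref{local b} that, writing $X=\|X\|\XS$ and parametrizing $\ell$ by arc-length $s$, the operator $\Delta_\ell=\div_\mu\nabla_\ell=\partial_s^2+b(s)\partial_s$ is symmetric with respect to any one-dimensional measure $\nu(s)\,ds$ on $\ell$ satisfying $b=\partial_s(\log\nu)$. The content of the claim is that the disintegrated measure $\mu_\ell$ is precisely such a measure. Concretely, since $\div_\mu\nabla_\ell$ only involves derivatives along the leaf, its expression in the rectifying coordinates depends on $\mu$ only through the $t$-density $\mu_0(t)$ restricted to $s=0$: the divergence with respect to $\mu$ of a vector field tangent to $\ell$ equals $\mu_0^{-1}\partial_t(\mu_0\,\cdot)$ on $\ell$, which is exactly $\div_{\mu_\ell}$. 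After converting from the flow parameter $t$ to arc-length $s$, this reproduces $b=\partial_s(\log\mu_\ell)$, yielding $\Delta_\ell=\div_{\mu_\ell}\nabla_\ell$.

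The main obstacle I anticipate is the intrinsicness argument: making rigorous that the $t$-density of the disintegration restricted to the leaf is independent of the transversal data up to a single global constant, rather than an $s$-dependent or $t$-dependent rescaling. The delicate point is that a priori changing $\nu$ could introduce a factor depending on the transversal coordinate; one must use that the divergence is insensitive to multiplicative constants (as already observed in the introduction) precisely to absorb this ambiguity, so that even though $\mu_\ell$ itself is only defined up to scale, the operator $\div_{\mu_\ell}\nabla_\ell$ is genuinely well defined and coincides with $\Delta_\ell$.
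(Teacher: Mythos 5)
Your verification of the identity $\Delta_\ell=\operatorname{div}_{\mu_\ell}\nabla_\ell$ in rectifying coordinates is essentially the paper's own computation, and your analysis of the two ambiguities (choice of transversal factor $\nu$, choice of chart) within a fixed chart is sound. The genuine gap is earlier: you assume the existence of a ``maximal embedding interval $I$ covering $\ell$'', i.e.\ a single rectifying chart from Lemma~\ref{lem: rect} whose image contains the whole leaf. Such an interval does not exist in general. For a $T$-periodic leaf the largest embedding interval is $(-T/2,T/2)$, whose image omits a point of the circle; and for a recurrent non-periodic leaf (case (ii) in the discussion preceding Lemma~\ref{lem: rect}) the paper explicitly notes that $\mathbb{R}$ is \emph{not} an embedding interval, so no single chart rectifies the leaf. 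In these cases your construction defines $\mu_\ell$ only on a proper subset of $\ell$, and the proposition is proved only for leaves of non-recurrent type (case (iii)).

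This is exactly where the paper's proof does its real work: it covers the leaf by at most countably many foliation boxes, disintegrates $\mu$ in each of them via Lemma~\ref{lem: dis}, and then glues, using the fact that two local disintegrations of the same measure differ, on overlapping pieces of a given leaf, by a nonzero multiplicative constant (for which it cites \cite{AVILA_VIANA_WILKINSON_2022}); the constants are then adjusted sequentially to produce one measure on all of $\ell$, unique up to a global scalar. Your chart-change argument (transitions of the form $(t,s)\mapsto(t+c(s),\tilde s(s))$) compares two charts over the \emph{same} piece of leaf; it does not address consistency of the disintegrations across \emph{different} pieces, which is the actual content of the gluing step. The difficulty is most visible in the recurrent case, where a single foliation box meets $\ell$ in infinitely many plaques, each carrying its own conditional measure, and one must check that the constants relating them can be chosen coherently along the leaf. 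To repair your argument you would need to add this patching step (or restrict the claim to leaves admitting a global rectifying chart).
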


\begin{proof}
Let $\ell$ be a leaf of $\fol$. To show the existence of the measure $\mu_\ell$, we proceed by covering the leaf into at most countably many patches, in such a way that on every patch, the measure $\mu$ can be disintegrated using Lemma \ref{lem: dis}. This is possible because, locally, one can always define a characteristic vector field, which is non zero at non characteristic points.

 Once we have the disintegrated measure on each patch, we show that the disintegrated measure on a fixed leaf agrees on the intersections of patches. This is possible because one can rectify $X$ on the intersection of the two neighbourhood and see that the two disintegrated measure must differ by a non-zero multiplicative constant (see \cite{AVILA_VIANA_WILKINSON_2022}). In particular, we can adjust the constant sequentially so that we get a well-defined measure on each leaf, which is unique up to multiplicative constant. 

It remains to show that $\Delta_\ell = \operatorname{div}_{\mu_\ell}\nabla_\ell$. 
Without loss of generality, we prove this fact under the hypothesis of Lemma \ref{lem: dis}. 
We first note that, since div$_{\mu_\ell} f=$div$_{c\mu_\ell}f$ for every non vanishing constant $c$, the definition is well posed. 
Consider now the coordinates $(s,t)$ of $S$ given by Lemma \ref{lem: rect}.
For any measure $\nu$, the expression div$_\nu \nabla_\ell$ is
\[
\XS^2+\text{div}_\mu(\XS)\XS,
\]
so that in particular it is enough to show that div$_\mu(\XS)=$div$_{\mu_\ell}(\XS)$.
But this comes directly from the fact that, in $s,t$ coordinates, $\XS=\partial_t$, $\mu=f(s,t)ds\,dt$ and $\mu_\ell=f(s_\ell,t)dt$, so that
\[
\div_\mu(\partial_t)=\frac{1}{f}[\partial_t(f)]=\div_{\mu_\ell}(\partial_t).
\]
This concludes the proof.
\end{proof}

\printbibliography

\end{document}